\newtheorem{theorem}{Theorem}[section]
\newtheorem{corollary}[theorem]{Corollary}
\newtheorem{proposition}[theorem]{Proposition}
\newtheorem{lemma}[theorem]{Lemma}
\theoremstyle{definition}
\theoremstyle{remark}
\numberwithin{equation}{section}
\DeclareMathOperator{\trans}{\mathstrut^{\mathit{t}}\!}
\DeclareMathOperator{\Prym}{Prym}
\DeclareMathOperator{\Sym}{Sym}
\DeclareMathOperator{\Mat}{Mat}
\DeclareMathOperator{\Pic}{Pic}
\DeclareMathOperator{\NS}{NS}
\DeclareMathOperator{\Branch}{Branch}
\DeclareMathOperator{\Spec}{Spec}
\DeclareMathOperator{\Proj}{Proj}
\DeclareMathOperator{\Sp}{Sp}
\DeclareMathOperator{\Bs}{Bs}
\DeclareMathOperator{\supp}{supp}
\DeclareMathOperator{\Div}{Div}
\DeclareMathOperator{\Ker}{Ker}
\DeclareMathOperator{\Image}{Image}
\DeclareMathOperator{\im}{Im}
\DeclareMathOperator{\diag}{diag}
\DeclareMathOperator{\Nm}{Nm}
\DeclareMathOperator{\mult}{mult}
\title[Prym loci of double coverings and Andreotti-Mayer loci]
{Prym loci of branched double coverings and generalized Andreotti-Mayer loci}
\author{Atsushi Ikeda}
\subjclass[2020]{14H40 (primary); 14H42, 14K25 (secondary).}
\address{Department of Mathematics and Data Science, School of Engineering,
Tokyo Denki University, Adachi-ku,
Tokyo 120-8551, Japan}
\email{atsushi@mail.dendai.ac.jp}
\thanks{The author is partially supported by JSPS KAKENHI Grant Number 20K03543.}
\begin{document}
\begin{abstract}
 The Andreotti-Mayer locus is a subset of the moduli space of principally polarized abelian varieties, defined by a condition on the dimension of the singular locus of the theta divisor.
 It is known that the Jacobian locus in the moduli space is an irreducible component of the Andreotti-Mayer locus.
 In this paper, we generalize the Andreotti-Mayer locus to the case of the moduli space of abelian varieties with non-principal polarization and prove that the Prym locus of branched double coverings is an irreducible component of the generalized Andreotti-Mayer locus.
\end{abstract}
\maketitle
\section{Introduction}
Let $C$ be a projective smooth curve of genus $g$ over the complex numbers $\mathbb{C}$, and let $\phi:\tilde{C}\rightarrow{C}$ be a double covering of $C$ branched at $2n$ points.
When $n\geq1$, the norm map $\Nm:\Pic{(\tilde{C})}\rightarrow\Pic{(C)}$ associated with $\phi$ has the connected kernel $P=\Ker{(\Nm)}$, which is an abelian subvariety of dimension $d=g+n-1$ in the Jacobian variety $J_{\tilde{C}}=\Pic^{0}{(\tilde{C})}$.
Let $\mathcal{L}\in\Pic{(P)}$ be the restriction of the invertible sheaf $\mathcal{O}_{J_{\tilde{C}}}(\Theta_{\tilde{C}})$ associated with the theta divisor $\Theta_{\tilde{C}}$ on $J_{\tilde{C}}$.
Then the class $[\mathcal{L}]\in{\NS{(P)}}$ determines the polarization on $P$ of type
$\Delta=(\underbrace{1,\dots,1}_{n-1},\underbrace{2,\dots,2}_{g})$,
and the polarized abelian variety $(P,[\mathcal{L}])$ is called the Prym variety (\cite{M2}) of the branched covering $\phi$.
From this construction, we obtain the Prym map
$$
\Prym_{g,2n}:\,\mathcal{R}_{g,2n}\longrightarrow\mathcal{A}_{d}^{\Delta};\,
[\tilde{C}\overset{\phi}{\rightarrow}{C}]\longmapsto
(P,[\mathcal{L}]),
$$
where $\mathcal{R}_{g,2n}$ denotes the moduli space of double coverings of curves of genus $g$ branched at $2n$ points, and $\mathcal{A}_{d}^{\Delta}$ denotes the moduli space of $d$-dimensional polarized abelian varieties of type $\Delta$.
When $g=0$, the Prym variety $\Prym_{0,2n}(\phi)=(J_{\tilde{C}},[\mathcal{O}_{J_{\tilde{C}}}(\Theta_{\tilde{C}})])$ is the hyperelliptic Jacobian, and the Prym map $\Prym_{0,2n}$ is injective by the Torelli theorem.
When $g=1$, it is proven in \cite{I} that the Prym map $\Prym_{1,2n}$ is injective for $n\geq3$.
More generally, the following result holds:
\begin{theorem}[Naranjo-Ortega \cite{NO2}]\label{185532_14Mar25}
 If $g\geq1$ and $n\geq3$, then the Prym map $\Prym_{g,2n}$ is injective.
\end{theorem}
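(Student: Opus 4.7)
The plan is to prove injectivity by reconstructing the branched covering $\phi:\tilde{C}\to C$ intrinsically from the polarized Prym variety $(P,[\mathcal{L}])$. The key geometric object is the Abel-Prym map $\alpha:\tilde{C}\to P$ sending $p\mapsto\mathcal{O}_{\tilde{C}}(p-\iota(p))$, where $\iota$ is the covering involution; for $n\geq 1$ this map is an embedding. Once one shows that $\alpha(\tilde{C})\subset P$ is canonically determined by $(P,[\mathcal{L}])$, the involution $\iota$ is recovered from the restriction of $[-1]_P$ (since $P$ is the $(-1)$-eigenspace of $\iota^{*}$ inside $J_{\tilde{C}}$), and hence $C=\tilde{C}/\iota$ and $\phi$ are reconstructed as well.

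To recover $\alpha(\tilde{C})$, I would combine two ingredients. First, a study of the linear system $|\mathcal{L}|$: since $\mathcal{L}$ has polarization type $\Delta=(1,\dots,1,2,\dots,2)$, one has $\dim H^{0}(P,\mathcal{L})=2^{g}$, and the family of divisors obtained by restricting translates of $\Theta_{\tilde{C}}$ to $P$ spans a distinguished subsystem whose base locus or singular stratification should contain $\alpha(\tilde{C})$, in the spirit of the classical Andreotti-Mayer characterization of Jacobians. Second, a codifferential computation: at a general point, one identifies the cotangent space of $\mathcal{A}^{\Delta}_{d}$ with $\Sym^{2}H^{0}(P,\Omega^{1}_{P})$ and shows that the codifferential of $\Prym_{g,2n}$ has image exactly the space of quadrics vanishing on the canonical image of $\tilde{C}$. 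Surjectivity on a dense open set yields generic injectivity; a degeneration argument letting pairs of branch points collide, so as to reduce either to smaller values of $n$ or to nodal coverings, would then extend injectivity to all of $\mathcal{R}_{g,2n}$ by induction, with the base case $n=3$ handled by a direct geometric analysis of the theta divisor.

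The main obstacle is proving that $\alpha(\tilde{C})$ is truly determined by $(P,[\mathcal{L}])$ and not merely up to some larger equivalence. The hypothesis $n\geq 3$ should be essential: for smaller $n$, analogues of Donagi's tetragonal construction for unramified Prym varieties can produce non-isomorphic coverings with the same Prym datum, so one has to verify that no such construction exists when $n\geq 3$. This reduces to a careful analysis of the automorphism group of $(P,[\mathcal{L}])$ together with the possible positive-dimensional or disconnected fibers of $\Prym_{g,2n}$; I expect it to be carried out by combining the infinitesimal rigidity above on the generic fiber with a monodromy or connectedness argument along boundary strata of $\mathcal{R}_{g,2n}$ to propagate injectivity from a well-understood degenerate locus back to the whole moduli space.
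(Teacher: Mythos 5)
This theorem is quoted from Naranjo--Ortega \cite{NO2}; the paper offers no proof of its own, so your attempt can only be measured against the cited argument and the machinery the paper does develop (Propositions~\ref{105542_26Feb25}, \ref{165606_22Feb25}, \ref{160207_5Mar25}). Measured that way, your plan has a concrete false step and a structural gap. The false step: the Abel--Prym map $p\mapsto\mathcal{O}_{\tilde{C}}(p-\sigma(p))$ sends every ramification point to the origin of $P$, and there are $2n\geq2$ of them, so it is \emph{not} an embedding for $n\geq1$; worse, you never supply a mechanism by which $\alpha(\tilde{C})$ could be cut out of $(P,[\mathcal{L}])$. The ``distinguished subsystem'' you invoke does not exist: by Proposition~\ref{105542_26Feb25} the restrictions of the translates $t_{\phi^{*}\xi}^{-1}(\Theta_{\tilde{C}})$ already generate all of $H^{0}(P,\mathcal{L})$, and the canonical object they determine is the base locus $\Bs|\mathcal{L}|\simeq B_{1}\cap P_{\phi}$, which is $(n-2)$-dimensional and birational to a degree-$4^{g}$ \'etale cover of $\tilde{C}^{(n-2)}$ (Proposition~\ref{160207_5Mar25}) --- not the Abel--Prym curve. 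The actual Naranjo--Ortega proof reconstructs $\phi$ from precisely this base locus (available once $n\geq3$ makes it positive-dimensional and irreducible for $n\geq 3$, cf.\ Corollary~\ref{154114_14Mar25}), which is a genuinely different recovery device from the one you propose.

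The structural gap is that your second ingredient --- surjectivity of the codifferential of $\Prym_{g,2n}$ onto the quadrics through the canonical image --- is an infinitesimal statement and at best yields \emph{generic} injectivity (this is essentially the content of \cite{NO1}, which proves only generic injectivity for these coverings). Passing from generic to global injectivity is the whole difficulty of \cite{NO2}, and a ``degeneration letting branch points collide plus induction on $n$'' does not close it: injectivity is not an open or closed condition that propagates from boundary strata, a colliding pair of branch points leaves $\mathcal{R}_{g,2n}$ entirely, and your base case $n=3$ is left as ``a direct geometric analysis'' with no content. As written, the proposal establishes neither the reconstruction of the covering from $(P,[\mathcal{L}])$ nor the globalization, so it does not prove the theorem.
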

Let $\mathcal{P}_{g,2n}\subset\mathcal{A}_{d}^{\Delta}$ be the closure of the image of the Prym map $\Prym_{g,2n}$.
We aim to understand which kinds of polarized abelian varieties are contained in the Prym locus $\mathcal{P}_{g,2n}$.
In the case where the polarization is principal, the characterization of the Jacobian locus has been classically studied as the Schottky problem.
Let $\mathcal{J}_{d}\subset\mathcal{A}_{d}^{(1,\dots,1)}$ (resp.\ $\mathcal{H}_{d}\subset\mathcal{A}_{d}^{(1,\dots,1)}$) denote the closure of the set of points representing the Jacobian varieties of projective smooth curves (resp.\ hyperelliptic curves) of genus $d$.
In this paper, we characterize $\mathcal{P}_{g,2n}$ in $\mathcal{A}_{d}^{\Delta}$, following the approach of Andreotti and Mayer \cite{AM} to characterize $\mathcal{J}_{d}$ and $\mathcal{H}_{d}$ in $\mathcal{A}_{d}^{(1,\dots,1)}$.\par
For an ample invertible sheaf $\mathcal{L}$ on an abelian variety $A$, we define the higher base locus of $\mathcal{L}$ by
$$
S(A,\mathcal{L})=\bigcap_{\Theta\in|\mathcal{L}|}\Theta_{\mathrm{sing}}
\subset{A},
$$
where $\Theta_{\mathrm{sing}}$ denotes the singular locus of the hypersurface $\Theta\subset{A}$.
If ample invertible sheaves $\mathcal{L}_{1}$ and $\mathcal{L}_{2}$ are algebraically equivalent,
then $S(A,\mathcal{L}_{1})\simeq{S(A,\mathcal{L}_{2})}$,
because $t_{x_{0}}^{*}\mathcal{L}_{1}\simeq\mathcal{L}_{2}$ by a translation
$$
t_{x_{0}}:A\longrightarrow{A};\,x\longmapsto{x+x_{0}}.
$$
For an integer $m\geq0$, a generalization of the Andreotti-Mayer locus is defined by
$$
\mathcal{N}_{d,m}^{\Delta}
=\{(A,[\mathcal{L}])\in\mathcal{A}_{d}^{\Delta}\mid
\dim{S(A,\mathcal{L})}\geq{m}\}.
$$
When $\Delta=(1,\dots,1)$, the subset
$\mathcal{N}_{d,m}^{(1,\dots,1)}\subset\mathcal{A}_{d}^{(1,\dots,1)}$
is the original Andreotti-Mayer locus, and the following results are known:
\begin{theorem}[Andreotti-Mayer \cite{AM}]\label{164133_28Jan25}
 \begin{enumerate}
  \item If $d\geq4$, then the Jacobian locus $\mathcal{J}_{d}$ is an irreducible component of $\mathcal{N}_{d,d-4}^{(1,\dots,1)}$.
  \item\label{164150_28Jan25}
       If $d\geq3$, then the hyperelliptic Jacobian locus $\mathcal{H}_{d}$ is an irreducible component of $\mathcal{N}_{d,d-3}^{(1,\dots,1)}$.
 \end{enumerate}
\end{theorem}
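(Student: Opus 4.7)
The plan is to prove both statements by first establishing the inclusions $\mathcal{J}_d\subset\mathcal{N}_{d,d-4}^{(1,\dots,1)}$ and $\mathcal{H}_d\subset\mathcal{N}_{d,d-3}^{(1,\dots,1)}$, and then carrying out a tangent space computation at a generic point to show that these inclusions identify $\mathcal{J}_d$ and $\mathcal{H}_d$ as irreducible components. The inclusions come straight from Riemann's singularity theorem: after an Abel--Jacobi translation, $\Theta_C$ is identified with $W_{d-1}(C)$ and its singular locus with $W^1_{d-1}(C)$, which by Martens's inequality has dimension at least $d-4$ for every smooth curve of genus $d\geq 4$, and exactly $d-3$ when $C$ is hyperelliptic. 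Since a principal polarization satisfies $h^0(\mathcal{L})=1$, the higher base locus $S(J_C,\mathcal{L})$ is just $\Theta_{\mathrm{sing}}$, so the inclusions follow.

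For the irreducible-component statements, the key tool is the \emph{heat equation} for Riemann theta functions, $\partial_{\tau_{ij}}\theta=\frac{1}{2\pi i(1+\delta_{ij})}\partial_{z_i}\partial_{z_j}\theta$. At a point $(A,\Theta)\in\mathcal{A}_d^{(1,\dots,1)}$ the tangent space is $\Sym^2 V$ with $V=T_{0}A$. If $(A_t,\Theta_t)$ is a deformation along $\tau'\in\Sym^2V$ and $x_t\in(\Theta_t)_{\mathrm{sing}}$ is a family of singular points, differentiating $\theta(x_t,\tau_t)=0$ and $\partial_{z_i}\theta(x_t,\tau_t)=0$ and substituting the heat equation forces the Hessian quadric $Q_x:=(\partial_{z_i}\partial_{z_j}\theta(x))_{ij}$, viewed as an element of $\Sym^2 V^*$, to be orthogonal to $\tau'$ under the trace pairing. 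Hence the tangent space to $\mathcal{N}_{d,m}^{(1,\dots,1)}$ at $(A,\Theta)$ is contained in the annihilator of the subspace of $\Sym^2 V^*$ spanned by $\{Q_x:x\in\Theta_{\mathrm{sing}}\}$.

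The computation then reduces to identifying this span. By Kempf's description of the tangent cone, the Hessian quadric $Q_x$ at a smooth point $x\in W^1_{d-1}(C)\setminus W^2_{d-1}(C)$ corresponds, via the canonical identification $V^*\cong H^0(C,K_C)$, to the determinantal quadric cut out by the multiplication map $H^0(D)\otimes H^0(K_C-D)\to H^0(K_C)$, which lies in the ideal $I_2(C)$ of quadrics containing the canonically embedded curve $\phi_K(C)\subset\mathbb{P}(V^*)$. A classical result (Andreotti--Mayer, refined by Green and others) asserts that for a generic non-hyperelliptic $C$ these quadrics span all of $I_2(C)$. A Riemann--Roch count gives $\dim I_2(C)=\binom{d+1}{2}-(3d-3)=\binom{d-2}{2}$, hence the annihilator has dimension $3d-3=\dim\mathcal{J}_d$. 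Since $\mathcal{J}_d$ is irreducible and sits inside $\mathcal{N}_{d,d-4}^{(1,\dots,1)}$ with matching tangent space at a general point, it must be an irreducible component. For the hyperelliptic case one runs the analogous argument: the Hessian quadrics at singular points of $\Theta_C$ now all contain the rational normal curve $\phi_{|g^1_2|}(C)^{(d-1)}$, and their span has codimension $2d-1=\dim\mathcal{H}_d$ in $\Sym^2 V^*$.

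The main obstacle is the spanning statement in the third paragraph: translating Kempf's local description of the tangent cone into a global identification of $\langle Q_x\rangle$ with $I_2(C)$ (or the appropriate smaller ideal in the hyperelliptic case). This rests on having enough singular points of $\Theta_C$ to recover the whole ideal of the canonical curve, which ultimately reduces to Petri's theorem (for $d\geq 4$ non-hyperelliptic) together with a careful dimension count of $W^1_{d-1}(C)$, so that the generic Hessian is nondegenerate in the family. Once this identification is in place, the dimension match forces $\mathcal{J}_d$ (resp.\ $\mathcal{H}_d$) to be an entire irreducible component rather than merely a subvariety of one.
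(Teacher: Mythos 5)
This statement is quoted in the paper as a known theorem of Andreotti--Mayer and is not proved there; the paper only cites \cite{AM}. So there is no internal proof to compare against line by line. That said, your sketch is essentially the original Andreotti--Mayer argument, and it is also precisely the template the author follows in Sections 2--4 to prove the generalized statement (Theorem~\ref{165539_28Jan25}): Riemann--Kempf to identify the singular locus and the tangent cones (the paper's Section~\ref{094948_19Mar25}), a spanning statement for the resulting quadrics inside the ideal of the relevant curve (Section~\ref{165857_20Mar25}, which in the hyperelliptic/elliptic base case leans on exactly the result you invoke, \cite[p195, Corollary~2]{AM}), and the heat-equation computation showing the tangent space of the Andreotti--Mayer locus annihilates the span of the Hessian quadrics (Section~\ref{165927_20Mar25}). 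Your dimension counts are correct: $\dim I_2=\binom{d-2}{2}$ with annihilator of dimension $3d-3=\dim\mathcal{J}_d$ in the non-hyperelliptic case, and codimension $2d-1=\dim\mathcal{H}_d$ for the ideal of the rational normal curve in the hyperelliptic case.

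Two places where the sketch is thinner than a complete proof, both of which the paper's own treatment of the generalized case shows how to fill. First, the step ``$x_t\in(\Theta_t)_{\mathrm{sing}}$ is a family of singular points'' assumes that the singular points whose Hessians you need actually deform along the given component of the Andreotti--Mayer locus; this requires the properness/degeneracy-set argument (the paper's $\mathcal{S}_m$, $\Lambda(\mathcal{N})$ and Lemma~\ref{184253_14Mar25}, where irreducibility of the singular locus, or a torsion-translation symmetry in the boundary case, guarantees that \emph{all} singular points lie on components dominating $\mathcal{N}$). Second, a small misattribution: the lower bound $\dim W^1_{d-1}\geq d-4$ for every curve is the existence half of Brill--Noether theory (Kempf, Kleiman--Laksov), not Martens's inequality, which gives the upper bound $\dim W^1_{d-1}\leq d-3$ with equality exactly in the hyperelliptic case. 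Neither issue affects the soundness of the overall strategy.
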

\begin{theorem}[Debarre \cite{D}]
 If $d\geq7$, then $\mathcal{P}_{d+1,0}$ is an irreducible component of $\mathcal{N}_{d,d-6}^{(1,\dots,1)}$, where $\mathcal{P}_{d+1,0}\subset\mathcal{A}_{d}^{(1,\dots,1)}$ denotes the Prym locus for unramified double coverings of curves of genus $d+1$.
\end{theorem}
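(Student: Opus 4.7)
The plan is to adapt the Andreotti-Mayer strategy of Theorem \ref{164133_28Jan25} to the Prym setting. Since the polarization on a Prym variety of an unramified double cover is principal, the linear system $|\mathcal{L}|$ contains a single divisor and $S(P,\mathcal{L})$ coincides with $\Xi_{\mathrm{sing}}$, where $\Xi$ denotes the Prym theta divisor. The inclusion $\mathcal{P}_{d+1,0}\subseteq\mathcal{N}_{d,d-6}^{(1,\dots,1)}$ thus reduces to Mumford's analysis of singularities of $\Xi$: for a general unramified double cover $\phi:\tilde{C}\rightarrow C$ of a curve of genus $g=d+1$, the ``stable'' locus $\{L\in\Xi\mid h^{0}(L)\geq 4\}$ has dimension $d-6$, and the ``exceptional'' singularities contribute no larger-dimensional components when $d\geq 7$, so $\dim\Xi_{\mathrm{sing}}=d-6$ generically.

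To prove that $\mathcal{P}_{d+1,0}$ is an actual irreducible component, I would study the local structure of $\mathcal{N}_{d,d-6}^{(1,\dots,1)}$ near a general point $[P_{0},\Xi_{0}]\in\mathcal{P}_{d+1,0}$. Identifying $T_{[A,\Theta]}\mathcal{A}_{d}^{(1,\dots,1)}\simeq\Sym^{2}T_{0}A$, the Zariski tangent space to $\mathcal{N}_{d,d-6}^{(1,\dots,1)}$ is cut out by the linear conditions expressing that at each $x\in\Xi_{\mathrm{sing}}$ the quadratic tangent cone of $\Xi$ deforms in a way compatible with preserving its singularity; via the heat equation these conditions amount to orthogonality against the tangent quadrics at the singular points, viewed as elements of $\Sym^{2}T_{0}P_{0}$. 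Comparing this with the codifferential of the Prym map and verifying that the latter spans the annihilator of these tangent cones yields that $\mathcal{P}_{d+1,0}$, irreducible of dimension $3d$, saturates $\mathcal{N}_{d,d-6}^{(1,\dots,1)}$ infinitesimally at a generic Prym and is therefore a full component.

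The main obstacle is the explicit control of the quadratic tangent cones at the stable singularities of $\Xi$. For Jacobians, Kempf's cohomological description reduces the analogous Andreotti-Mayer computation to elementary linear algebra on $H^{0}(K_{C})$; in the Prym case one needs the refinement, due to Mumford, Beauville, and Smith-Varley, identifying the tangent cone at a stable $L\in\Xi_{\mathrm{sing}}$ with the image of a bilinear multiplication map built from $H^{0}(L)$ and $H^{0}(K_{C}\otimes\eta\otimes L^{-1})$, where $\eta\in J_{C}[2]$ is the 2-torsion point defining the cover. The hypothesis $d\geq 7$ enters here to guarantee enough stable singularities with sufficiently generic tangent cones that the collection of induced linear conditions on $\Sym^{2}T_{0}P_{0}$ cuts out exactly the image of the differential of the Prym map, which is what closes the dimension count and forces $\mathcal{P}_{d+1,0}$ to be a component rather than a proper subvariety of a larger one.
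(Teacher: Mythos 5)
First, a point of order: the paper does not prove this statement. It is quoted verbatim from Debarre's article \cite{D} as background motivation, so there is no in-paper proof to compare your attempt against line by line. The closest internal analogue is the paper's proof of its own Theorem~\ref{165539_28Jan25}, which follows exactly the template you describe: show the Prym locus sits inside the Andreotti--Mayer locus by computing $S(P,\mathcal{L})$, describe the quadratic tangent cones at its points cohomologically, prove those quadrics span $\Ker(\rho_{2})$, and convert them via the heat equation into independent cotangent vectors bounding $\dim\mathcal{N}$ from above by $\dim\mathcal{P}$. Your outline is the correct strategy, and it is indeed the strategy of both Debarre and of this paper.

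That said, as a proof your proposal has a genuine gap rather than a complete argument: the entire technical content of Debarre's theorem is the step you defer to ``the main obstacle,'' namely that the tangent cones at the stable singularities of $\Xi$ span the full space $\Ker\bigl(\Sym^{2}H^{0}(K_{C}\otimes\eta)\rightarrow H^{0}(K_{C}^{\otimes2})\bigr)$ of Prym-canonical quadrics, i.e.\ give $\frac{d(d+1)}{2}-3d$ independent linear conditions. Asserting that $d\geq7$ ``guarantees enough stable singularities with sufficiently generic tangent cones'' is precisely what must be proved, and it is delicate: in the analogous branched situation the paper devotes all of Section~\ref{165857_20Mar25} to it (Lemma~\ref{113436_12Mar25} and Proposition~\ref{180553_6Mar25}), working at carefully chosen special covers and using Saint-Donat's results, and Debarre's unramified case is harder still. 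Two smaller imprecisions: (i) the Andreotti--Mayer method does not compute the Zariski tangent space of $\mathcal{N}_{d,d-6}^{(1,\dots,1)}$ at the given point, which may be singular there; it bounds the tangent space of a local irreducible component at nearby smooth points (cf.\ Lemma~\ref{123635_14Mar25} and Proposition~\ref{184527_14Mar25}), and one must also control which singular points of $\Xi$ propagate along the component (the role of $\mathcal{S}(\tau_{0},\mathcal{N})$ and Lemma~\ref{184253_14Mar25}); (ii) you do not need the conditions to cut out exactly the image of the differential of the Prym map --- it suffices to get the upper bound $\dim\mathcal{N}\leq3d$ and to combine it with the lower bound $\dim\mathcal{P}_{d+1,0}=3d$, which itself requires a generic finiteness statement for the Prym map that your sketch does not address.
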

In this paper, we prove the following:
\begin{theorem}\label{165539_28Jan25}
 If $n\geq4$ and $d=g+n-1$, then the Prym locus $\mathcal{P}_{g,2n}\subset\mathcal{A}_{d}^{\Delta}$ is an irreducible component of $\mathcal{N}_{d,n-4}^{\Delta}$.
\end{theorem}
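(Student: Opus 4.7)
The proof naturally splits into establishing a containment and a component-maximality statement. For the containment $\mathcal{P}_{g,2n}\subseteq\mathcal{N}_{d,n-4}^{\Delta}$, I would analyze the members $\Xi\in|\mathcal{L}|$ on a Prym variety $(P,[\mathcal{L}])$ coming from $\phi\colon\tilde{C}\to C$ via the Brill-Noether theory of $\tilde{C}$. Since $\mathcal{L}$ is the restriction of $\mathcal{O}_{J_{\tilde{C}}}(\Theta_{\tilde{C}})$, each $\Xi$ is essentially cut out on $P$ by a translate of $\Theta_{\tilde{C}}$. A Riemann-Kempf type argument in the branched Prym setting identifies a stable portion of $\bigcap_{\Xi\in|\mathcal{L}|}\Xi_{\mathrm{sing}}$, built from line bundles $L$ on $\tilde{C}$ with $\Nm(L)$ in the appropriate class and $h^{0}(L)\geq2$; a parameter count then gives $\dim S(P,\mathcal{L})\geq n-4$.

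For the component maximality, observe first that $\mathcal{P}_{g,2n}$ is irreducible, being the closure of the image of the irreducible $\mathcal{R}_{g,2n}$, and that by Theorem~\ref{185532_14Mar25} we have $\dim\mathcal{P}_{g,2n}=\dim\mathcal{R}_{g,2n}$. It then suffices to show that at a general point $(P,[\mathcal{L}])\in\mathcal{P}_{g,2n}$, every tangent vector to $\mathcal{N}_{d,n-4}^{\Delta}$ is already tangent to $\mathcal{P}_{g,2n}$. Following the infinitesimal approach of Andreotti-Mayer, a tangent vector to $\mathcal{A}_{d}^{\Delta}$ is encoded as a symmetric tensor $\eta\in\Sym^{2}T_{0}P$, and the condition of preserving a dimension-$(n-4)$ singular locus translates, via the heat equation for theta functions, into the vanishing of $\eta$ paired against the tangent cones to $\Xi$ at generic points of every $(n-4)$-dimensional component of $S(P,\mathcal{L})$. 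The resulting linear constraints on $\eta$ must be shown to cut $\Sym^{2}T_{0}P$ down to exactly the image of the differential of $\Prym_{g,2n}$, which by injectivity has dimension $\dim\mathcal{R}_{g,2n}$.

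The main obstacle is this final tangent-space calculation. Concretely one must identify the Gauss image of the stable singular locus inside $\mathbb{P}(T_{0}^{\vee}P)$ with a projective variety naturally associated to the double cover $\tilde{C}\to C$, and then invoke a non-degeneracy result for the quadrics vanishing on it, so as to match the codimension of $\mathcal{N}_{d,n-4}^{\Delta}$ at a general Prym point with that of $\mathcal{P}_{g,2n}$ in $\mathcal{A}_{d}^{\Delta}$. The hypothesis $n\geq4$ should enter precisely here: it ensures both that the Brill-Noether locus constructed in the first step is non-empty (so that $S(P,\mathcal{L})$ has non-negative dimension $n-4\geq 0$) and that its Gauss image is sufficiently non-degenerate inside $\mathbb{P}(T_{0}^{\vee}P)$, so that the quadratic constraints have the correct codimension; for smaller $n$ additional components of $\mathcal{N}_{d,n-4}^{\Delta}$ containing $\mathcal{P}_{g,2n}$ could arise.
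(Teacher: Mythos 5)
Your outline follows the same overall strategy as the paper --- the Andreotti--Mayer infinitesimal method: describe $S(P,\mathcal{L})$ via Mumford's theory of the branched Prym and the Riemann--Kempf theorem to get $\dim S(P,\mathcal{L})=n-4$, then use the heat equation to convert linear independence of tangent-cone quadrics at points of $S(P,\mathcal{L})$ into a codimension bound for any local analytic component $\mathcal{N}$ of the Andreotti--Mayer locus through a Prym point, and conclude by comparing with $\dim\mathcal{P}_{g,2n}=3g+2n-3$ (irreducibility of $\mathcal{R}_{g,2n}$ plus Theorem~\ref{185532_14Mar25}). One small correction of emphasis: all that is needed, and all that is proved, is the inequality $\dim\mathcal{N}\leq 3g+2n-3$; the paper never identifies the tangent space of $\mathcal{N}$ with the image of $d\Prym_{g,2n}$, so your stronger formulation (``every tangent vector to $\mathcal{N}_{d,n-4}^{\Delta}$ is already tangent to $\mathcal{P}_{g,2n}$'') asks for more than the argument delivers or requires.

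The decisive step, which you explicitly leave as ``the main obstacle,'' is a genuine gap, and the way you propose to set it up would not obviously close. One must show that the determinantal (rank $\leq4$) quadrics $\mathcal{V}_{E,\mathcal{G}}$ arising as tangent cones at points of $S(P,\mathcal{L})$ span the full space $\Ker(\rho_{2})$ of quadrics through the \emph{base} curve $C$ embedded by $|\Omega_{C}^{1}\otimes\eta_{\phi}|$ in $\mathbb{P}(H^{0}(\Omega_{C}^{1}\otimes\eta_{\phi}))$ --- the tangent cones descend through $j^{\vee}$ to $H^{0}(\Omega_{C}^{1}\otimes\eta_{\phi})^{\vee}\simeq T_{P,0}$, so the relevant projective curve is $C$, not $\tilde{C}$. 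This spanning statement is not established (in the paper or elsewhere in your sketch) at a \emph{general} point of $\mathcal{P}_{g,2n}$, so working ``at a general point'' is not a viable plan as stated. The paper instead specializes to $\tau_{0}\in\pi^{-1}(\Prym_{g,2n}(\mathcal{R}_{g,2n}^{h}))$, i.e., to covers of elliptic or hyperelliptic base curves, where the degree-two pencil $|\mathcal{H}|$ allows an explicit construction of $\tfrac{g(g+1)}{2}+1$ independent quadrics of this type spanning $\Ker(\rho_{2})$ when $n=4$ (Lemma~\ref{113436_12Mar25}), followed by an induction on $n$ via projection from points of $C$ (Proposition~\ref{180553_6Mar25}); since every component of $\mathcal{N}_{d,n-4}^{\Delta}$ containing $\mathcal{P}_{g,2n}$ passes through these special points, the local dimension bound obtained there suffices. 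Without either proving the non-degeneracy at a general Prym point or introducing this specialization, your argument does not yield the required codimension $r=\tfrac{d(d+1)}{2}-3g-2n+3$ and the proof is incomplete.
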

This is a generalization of Theorem~\ref{164133_28Jan25}~(\ref{164150_28Jan25}),
because $\mathcal{P}_{0,2n}=\mathcal{H}_{d}\subset\mathcal{A}_{d}^{(1,\dots,1)}$ for $d=n-1$.
According to the folk conjecture \cite[Conjecture~3.15~(2)]{GH},
it is expected that
$\overline{\mathcal{N}_{d,d-3}^{(1,\dots,1)}
\setminus\mathcal{N}_{d,d-2}^{(1,\dots,1)}}
=\mathcal{H}_{d}$.
More generally we expect that
$\overline{\mathcal{N}_{d,n-4}^{\Delta}\setminus\mathcal{N}_{d,n-3}^{\Delta}}=\mathcal{P}_{g,2n}$ for $d=g+n-1$,
but this remains an open problem.\par
This paper proceeds as follows:
In Section~\ref{094948_19Mar25}, for the Prym variety $(P,[\mathcal{L}])$ of a branched double covering, we explicitly describe the higher base locus $S(P,\mathcal{L})\subset{P}$, and prove that
$\mathcal{P}_{g,2n}\subset\mathcal{N}_{d,n-4}^{\Delta}$.
In the proof, we provide the equation of the tangent cone
$\mathcal{T}_{\Theta,x}\subset{T_{P,x}=T_{P,0}}$ of $\Theta\in|\mathcal{L}|$
at $x\in{S(P,\mathcal{L})}$.
In Section~\ref{165857_20Mar25}, we prove that
the equations of the tangent cones $\mathcal{T}_{\Theta,x}$
generate a certain subspace in $\Sym^{2}{T_{P,0}}^{\vee}$.
In Section~\ref{165927_20Mar25}, following the approach in \cite{AM}, we introduce a method to bound the dimension of an irreducible component $\mathcal{P}$ of $\mathcal{N}_{d,n-4}^{\Delta}$.
When $\mathcal{P}_{g,2n}\subset\mathcal{P}$, by using independent equations of the tangent cones $\mathcal{T}_{\Theta,x}$, we can provide an upper bound for the dimension of $\mathcal{P}$.
Combining this with the result from Section~\ref{165857_20Mar25},
we obtain
$\dim{\mathcal{P}}\leq\dim{\mathcal{P}_{g,2n}}$, which implies that
$\mathcal{P}=\mathcal{P}_{g,2n}$.
\section{Higher base loci and the tangent cone of theta divisors}\label{094948_19Mar25}
We assume that $n\geq1$ and $g\geq1$.
Let $(P,[\mathcal{L}])$ be the Prym variety of $[\tilde{C}\overset{\phi}{\rightarrow}{C}]\in\mathcal{R}_{g,2n}$.
Following Mumford's paper \cite{M2}, we describe the base locus of the linear system $|\mathcal{L}|$.
By \cite[p333]{M2}, we may assume that
$$
[-1]_{J_{C}}^{*}\mathcal{O}_{J_{C}}(\Theta_{C})
=\mathcal{O}_{J_{C}}(\Theta_{C}),\quad
[-1]_{J_{\tilde{C}}}^{*}\mathcal{O}_{J_{\tilde{C}}}(\Theta_{\tilde{C}})
=\mathcal{O}_{J_{\tilde{C}}}(\Theta_{\tilde{C}}),
$$
and
$$
\psi^{*}\mathcal{O}_{J_{\tilde{C}}}(\Theta_{\tilde{C}})
=\mathcal{O}_{J_{C}}(2\Theta_{C}),\quad
\mathcal{L}=\mathcal{O}_{\tilde{C}}(\Theta_{\tilde{C}})\vert_{P},
$$
where $\psi:J_{C}\rightarrow{J_{\tilde{C}}}$ is the homomorphism defined by
$$
J_{C}=\Pic^{0}{(C)}\longrightarrow{J_{\tilde{C}}=\Pic^{0}{(\tilde{C})}};\,
\xi\longmapsto\phi^{*}\xi.
$$
We define the homomorphism
$$
\pi:\,J_{C}\times{P}\longrightarrow{J_{\tilde{C}}};\,
(\xi,\mathcal{F})\longmapsto{\mathcal{F}\otimes\phi^{*}\xi}.
$$
Then we have
$\pi^{*}\mathcal{O}_{J_{\tilde{C}}}(\Theta_{\tilde{C}})
={\mathrm{pr}_{1}^{*}\mathcal{O}_{J_{C}}(2\Theta_{C})
\otimes{\mathrm{pr}_{2}^{*}\mathcal{L}}}$,
where $\mathrm{pr}_{i}$ denotes the $i$-th projection of $J_{C}\times{P}$.
Let
$s\in{H^{0}(J_{\tilde{C}},\mathcal{O}_{J_{\tilde{C}}}(\Theta_{\tilde{C}}))}$
be a nontrivial section.
Since $n\geq1$, $\psi:J_{C}\rightarrow{J_{\tilde{C}}}$ is injective, and by \cite[p335]{M2}, we have
$$
\pi^{*}s
=\sum_{i=1}^{2^{g}}\mathrm{pr}_{1}^{*}\alpha_{i}
\otimes{\mathrm{pr}_{2}^{*}\beta_{i}}
\in{H^{0}(J_{C}\times{P},
\mathrm{pr}_{1}^{*}\mathcal{O}_{J_{C}}(2\Theta_{C})
\otimes{\mathrm{pr}_{2}^{*}\mathcal{L}})}
$$
for some bases
$\alpha_{1},\dots,\alpha_{2^{g}}\in{H^{0}(J_{C},\mathcal{O}_{J_{C}}(2\Theta_{C}))}$
and
$\beta_{1},\dots,\beta_{2^{g}}\in{H^{0}(P,\mathcal{L})}$.
This implies the following proposition for the base locus of $|\mathcal{L}|$.
For $\mathcal{F}_{0}\in\Pic{(\tilde{C})}$, we denote the translation by
$$
t_{\mathcal{F}_{0}}:\,\Pic{(\tilde{C})}\longrightarrow\Pic{(\tilde{C})};\
\mathcal{F}\longmapsto{\mathcal{F}\otimes{\mathcal{F}_{0}}}.
$$
\begin{proposition}[\cite{M2} p334]\label{105542_26Feb25}
 The base locus of the linear system
 $|\mathcal{L}|$
 is
 $$
 \Bs{|\mathcal{L}|}=
 \{\mathcal{F}\in{P}\mid{\psi(J_{C})\subset
 t_{\mathcal{F}}^{-1}(\Theta_{\tilde{C}})}\}
 =\bigcap_{\xi\in\Pic^{0}{(C)}}
 t_{\phi^{*}\xi}^{-1}(\Theta_{\tilde{C}})\vert_{P}.
 $$
 More precisely, $H^{0}(P,\mathcal{L})$ is generated by defining sections of the hypersurfaces
 $t_{\phi^{*}\xi}^{-1}(\Theta_{\tilde{C}})\vert_{P}
 \subset{P}$
 for $\xi\in\Pic^{0}{(C)}$.
\end{proposition}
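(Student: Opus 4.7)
The plan is to exploit the decomposition $\pi^{*}s=\sum_{i=1}^{2^{g}}\mathrm{pr}_{1}^{*}\alpha_{i}\otimes\mathrm{pr}_{2}^{*}\beta_{i}$ in two different ways by restricting $\pi^{*}s$ to the fibers of the two projections $\mathrm{pr}_{1}$ and $\mathrm{pr}_{2}$. Each restriction admits two descriptions, one geometric (as a pullback of $s$) and one algebraic (as a linear combination of $\alpha_{i}$'s or $\beta_{i}$'s), and comparing them will yield the two equalities of the proposition together with the generation statement.

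First I would fix $\xi\in J_{C}$ and restrict $\pi^{*}s$ to $\{\xi\}\times P$. Since the restriction of $\pi$ to this fiber is the composition of the inclusion $P\hookrightarrow J_{\tilde{C}}$ with the translation $t_{\phi^{*}\xi}$, the restricted section is, under the obvious identification of line bundles coming from the isomorphism $\pi^{*}\mathcal{O}_{J_{\tilde{C}}}(\Theta_{\tilde{C}})\simeq\mathrm{pr}_{1}^{*}\mathcal{O}_{J_{C}}(2\Theta_{C})\otimes\mathrm{pr}_{2}^{*}\mathcal{L}$, the defining section of $t_{\phi^{*}\xi}^{-1}(\Theta_{\tilde{C}})|_{P}\subset P$. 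On the other hand the decomposition shows that this restriction equals $\sum_{i}\alpha_{i}(\xi)\beta_{i}\in H^{0}(P,\mathcal{L})$. Since $\alpha_{1},\dots,\alpha_{2^{g}}$ are linearly independent on $J_{C}$, the vectors $(\alpha_{1}(\xi),\dots,\alpha_{2^{g}}(\xi))$ span $\mathbb{C}^{2^{g}}$ as $\xi$ varies, so these defining sections span $H^{0}(P,\mathcal{L})$. This establishes the ``more precisely'' statement, and intersecting their zero loci yields the second equality $\Bs|\mathcal{L}|=\bigcap_{\xi\in\Pic^{0}(C)}t_{\phi^{*}\xi}^{-1}(\Theta_{\tilde{C}})|_{P}$.

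Next I would fix $\mathcal{F}\in P$ and restrict $\pi^{*}s$ symmetrically to $J_{C}\times\{\mathcal{F}\}$. The restriction of $\pi$ here is $t_{\mathcal{F}}\circ\psi$, so the restricted section becomes $\psi^{*}(t_{\mathcal{F}}^{*}s)\in H^{0}(J_{C},\mathcal{O}_{J_{C}}(2\Theta_{C}))$, which vanishes identically on $J_{C}$ if and only if $\psi(J_{C})\subset t_{\mathcal{F}}^{-1}(\Theta_{\tilde{C}})$. But the same restriction also equals $\sum_{i}\beta_{i}(\mathcal{F})\alpha_{i}$, and because $\alpha_{1},\dots,\alpha_{2^{g}}$ are a basis this vanishes identically iff every $\beta_{i}(\mathcal{F})=0$, i.e.\ iff $\mathcal{F}\in\Bs|\mathcal{L}|$. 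This gives the first equality.

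The only genuine obstacle is a bookkeeping one: the isomorphism $\pi^{*}\mathcal{O}_{J_{\tilde{C}}}(\Theta_{\tilde{C}})\simeq\mathrm{pr}_{1}^{*}\mathcal{O}_{J_{C}}(2\Theta_{C})\otimes\mathrm{pr}_{2}^{*}\mathcal{L}$ must be pinned down compatibly on both families of fibers, so that the two descriptions of each restricted section really coincide and the identifications with $t_{\phi^{*}\xi}^{*}s|_{P}$ and $\psi^{*}t_{\mathcal{F}}^{*}s$ are the correct ones; once this is done, the proposition follows directly.
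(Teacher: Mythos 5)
Your argument is correct and is exactly the one the paper intends: it states the decomposition $\pi^{*}s=\sum_{i}\mathrm{pr}_{1}^{*}\alpha_{i}\otimes\mathrm{pr}_{2}^{*}\beta_{i}$ with $\alpha_{i},\beta_{i}$ bases immediately before the proposition and simply asserts that "this implies" it (citing Mumford), and your two fiberwise restrictions of $\pi^{*}s$ are precisely the standard way to extract both equalities and the generation statement from that decomposition. No gaps beyond the bookkeeping point you already flag yourself.
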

For $[\tilde{C}\overset{\phi}{\rightarrow}{C}]\in\mathcal{R}_{g,2n}$, there exists a unique invertible sheaf $\eta_{\phi}\in\Pic^{n}{(C)}$ such that $\Omega_{\tilde{C}}^{1}=\phi^{*}(\Omega_{C}^{1}\otimes\eta_{\phi})$.
Let
$$
P_{\phi}=\Nm^{-1}(\Omega_{C}^{1}\otimes\eta_{\phi})\subset
\Pic^{\tilde{g}-1}{(\tilde{C})}
$$
be the fiber of the norm map at $\Omega_{C}^{1}\otimes\eta_{\phi}\in\Pic{(C)}$,
where $\tilde{g}=2g+n-1$ is the genus of $\tilde{C}$.
We set
$$
W_{\tilde{C}}=\{\mathcal{F}\in\Pic^{\tilde{g}-1}{(\tilde{C})}\mid
h^{0}(\tilde{C},\mathcal{F})\geq1\}
$$
and
$\mathcal{L}_{\phi}=\mathcal{O}_{\Pic^{\tilde{g}-1}{(\tilde{C})}}(W_{\tilde{C}})\vert_{P_{\phi}}$.
Then there exists $\mathcal{F}_{0}\in{P_{\phi}}$ such that
$$
\mathcal{O}_{J_{\tilde{C}}}(\Theta_{\tilde{C}})=
t_{\mathcal{F}_{0}}^{*}\mathcal{O}_{\Pic^{\tilde{g}-1}{(\tilde{C})}}(W_{\tilde{C}}),
$$
hence the translation $t_{\mathcal{F}_{0}}$ induces isomorphisms $P=\Ker{(\Nm)}\overset{\simeq}{\rightarrow}P_{\phi}$
and
$\Bs{|\mathcal{L}|}\overset{\simeq}{\rightarrow}\Bs{|\mathcal{L}_{\phi}|}$.
In \cite{NO1}, Naranjo and Ortega gave an explicit description for $\Bs{|\mathcal{L}_{\phi}|}$.
For $j\geq0$, we denote by $\tilde{C}^{(j)}$ the $j$-th symmetric product of $\tilde{C}$, which parameterizes effective divisors of degree $j$ on $\tilde{C}$.
We define the subvariety in $\Pic^{\tilde{g}-1}{(\tilde{C})}$ by
$$
B_{i}=\Image{(\tilde{C}^{(n-2i)}\times\Pic^{g-1+i}{(C)}
\longrightarrow\Pic^{\tilde{g}-1}{(\tilde{C})};\,
(D,\mathcal{G})\longmapsto
\mathcal{O}_{\tilde{C}}(D)\otimes\phi^{*}\mathcal{G})}
$$
for $2i\leq{n}$.
When $2i>n$, we set $B_{i}=\emptyset$.
\begin{proposition}[\cite{NO1} Proposition~1.6.]\label{165606_22Feb25}
 $\displaystyle{\Bs{|\mathcal{L}_{\phi}|}
 =B_{1}\cap{P_{\phi}}}$.
\end{proposition}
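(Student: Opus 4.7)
The plan is to combine Proposition~\ref{105542_26Feb25}, translated via $t_{\mathcal{F}_0}$, with the projection formula for $\phi$ and a line-subbundle analysis of rank-$2$ bundles on $C$. The first step is to transport Proposition~\ref{105542_26Feb25} along $t_{\mathcal{F}_0}$, which carries $\Theta_{\tilde{C}}$ to $W_{\tilde{C}}$, $P$ to $P_\phi$, and $\Bs{|\mathcal{L}|}$ to $\Bs{|\mathcal{L}_\phi|}$, giving
$$
\Bs{|\mathcal{L}_\phi|}
 = \{\mathcal{F}\in P_\phi \mid h^{0}(\tilde{C},\mathcal{F}\otimes\phi^{*}\xi)\geq 1 \text{ for every } \xi\in\Pic^{0}{(C)}\}.
$$
So the proposition reduces to characterizing those $\mathcal{F}\in P_\phi$ whose every $\phi^{*}$-twist is effective.

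For the inclusion $B_{1}\cap P_\phi \subset \Bs{|\mathcal{L}_\phi|}$, I would argue directly. If $\mathcal{F} = \mathcal{O}_{\tilde{C}}(D)\otimes\phi^{*}\mathcal{G}$ with $D\in\tilde{C}^{(n-2)}$ and $\mathcal{G}\in\Pic^{g}{(C)}$, then Jacobi inversion on $C$ (the surjectivity of $C^{(g)}\to\Pic^{g}{(C)}$) yields an effective $E\in C^{(g)}$ with $\mathcal{G}\otimes\xi = \mathcal{O}_{C}(E)$; consequently $\mathcal{F}\otimes\phi^{*}\xi = \mathcal{O}_{\tilde{C}}(D+\phi^{*}E)$ is manifestly effective.

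For the reverse inclusion I would apply the projection formula to rewrite the condition as $h^{0}(C,\phi_{*}\mathcal{F}\otimes\xi)\geq 1$ for every $\xi\in\Pic^{0}{(C)}$, where $\phi_{*}\mathcal{F}$ is rank-$2$, of degree $2g-2$, and has canonical determinant, since
$$
\det\phi_{*}\mathcal{F} = \Nm(\mathcal{F})\otimes\det\phi_{*}\mathcal{O}_{\tilde{C}} = (\Omega_{C}^{1}\otimes\eta_{\phi})\otimes\eta_{\phi}^{-1} = \Omega_{C}^{1},
$$
so $\chi(\phi_{*}\mathcal{F}\otimes\xi)=0$ for all $\xi$. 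The core step is to invoke the following fact: a rank-$2$ bundle $E$ on $C$ with $\det E = \Omega_{C}^{1}$ for which $h^{0}(E\otimes\xi)\geq 1$ holds for every $\xi\in\Pic^{0}{(C)}$ necessarily admits a line subbundle $\mathcal{G}\hookrightarrow E$ with $\deg\mathcal{G}\geq g$. Granting this, adjunction for $\phi$ converts $\mathcal{G}\hookrightarrow\phi_{*}\mathcal{F}$ into a nonzero morphism $\phi^{*}\mathcal{G}\to\mathcal{F}$, equivalently a section of $\mathcal{F}\otimes\phi^{*}\mathcal{G}^{-1}$ whose effective zero divisor $D$ has degree $\tilde{g}-1-2\deg\mathcal{G}\leq n-2$. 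This gives $\mathcal{F} = \mathcal{O}_{\tilde{C}}(D)\otimes\phi^{*}\mathcal{G}$, and writing $\mathcal{G} = \mathcal{G}_{0}\otimes\mathcal{O}_{C}(E')$ with $\deg\mathcal{G}_{0} = g$ and absorbing $\phi^{*}E'$ into $D$ puts $\mathcal{F}$ into the required form $\mathcal{F}\in B_{1}$.

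I expect the main obstacle to be the line-subbundle statement for rank-$2$ bundles. My approach would be a contradiction argument: assuming every line subbundle $L\subset\phi_{*}\mathcal{F}$ has $\deg L\leq g-1$, each nonzero section of $\phi_{*}\mathcal{F}\otimes\xi$ factors through the saturation $L$ of the induced map $\xi^{-1}\to\phi_{*}\mathcal{F}$, placing $\xi$ in a translate $L^{-1}+W_{\deg L}(C)$ of a Brill-Noether variety of dimension at most $g-1$. The technical crux is to control the union of these translates as $L$ varies over all line subbundles — in particular to handle positive-dimensional families of maximal line subbundles that can appear when $\phi_{*}\mathcal{F}$ is decomposable or unstable — and to show that the resulting locus is strictly contained in $\Pic^{0}{(C)}$, contradicting the hypothesis and forcing $\deg\mathcal{G}\geq g$.
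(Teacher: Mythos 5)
The paper does not prove this proposition itself; it is quoted from Naranjo--Ortega \cite{NO1}, so there is no internal proof to compare against. Your reduction is nonetheless worth assessing on its own terms. The translation of Proposition~\ref{105542_26Feb25}, the inclusion $B_{1}\cap P_{\phi}\subset\Bs{|\mathcal{L}_{\phi}|}$ via Jacobi inversion, the computation $\det\phi_{*}\mathcal{F}=\Omega_{C}^{1}$, and the passage from a line subbundle $\mathcal{G}\subset\phi_{*}\mathcal{F}$ of degree $\geq g$ back to a point of $B_{1}$ are all correct. The whole weight of the argument therefore rests on your ``core step'': a rank-$2$ bundle $E$ with $\det E=\Omega_{C}^{1}$ and $h^{0}(E\otimes\xi)\geq1$ for all $\xi\in\Pic^{0}{(C)}$ has a line subbundle of degree $\geq g$. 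This statement is true, but it is equivalent (after splitting off the trivial unstable case) to Raynaud's theorem that every semistable rank-$2$ bundle of slope $g-1$ admits a theta divisor, and that is not a formal consequence of the dimension count you propose.

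Concretely, the count does not close. If every saturated line subbundle $L\subset E$ has $0\leq e=\deg L\leq g-1$, the family $S_{e}$ of such subbundles has tangent space $H^{0}(\Omega_{C}^{1}\otimes L^{-2})$ at $L$, of dimension $g-1-2e+h^{0}(L^{\otimes2})$, which by Clifford's bound can be as large as $g-e$. The locus of $\xi$ accounted for by subbundles of degree $e$ is contained in $\bigcup_{L\in S_{e}}\bigl(L^{-1}\otimes W_{e}\bigr)$ (with $W_{e}\subset\Pic^{e}{(C)}$ the image of $C^{(e)}$), hence has dimension at most $\dim S_{e}+e\leq g$ --- exactly the dimension of $\Pic^{0}{(C)}$, so no contradiction follows from counting alone. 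Ruling out that these translates sweep out all of $\Pic^{0}{(C)}$ in the borderline cases is precisely the hard content of Raynaud's theorem (his proof is an infinitesimal/cohomological argument, not a Brill--Noether dimension count), and it is also where the actual proof in \cite{NO1} does its work. So either you must supply that additional argument, or you should cite the existence of theta divisors for semistable rank-$2$ bundles as a known input; as written, the step you yourself flag as the ``technical crux'' is a genuine gap rather than a routine verification.
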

When we set
$W_{\xi}=t_{\phi^{*}\xi}^{-1}(W_{\tilde{C}})\vert_{P_{\phi}}
\in|\mathcal{L}_{\phi}|$ for $\xi\in\Pic^{0}{(C)}$,
as in Proposition~\ref{105542_26Feb25}
we have
$$
\displaystyle{\Bs{|\mathcal{L}_{\phi}|}
=\bigcap_{\xi\in\Pic^{0}{(C)}}W_{\xi}
=B_{1}\cap{P_{\phi}}},
$$
and $H^{0}(P_{\phi},\mathcal{L}_{\phi})$ is generated by defining sections of $W_{\xi}$ for $\xi\in\Pic^{0}{(C)}$.
We define
$S_{i}(P_{\phi},\mathcal{L}_{\phi})
=\bigcap_{\Theta\in|\mathcal{L}_{\phi}|}\Theta_{\geq{i}}$,
where $\Theta_{\geq{i}}$ denotes the higher multiplicity locus
$$
\Theta_{\geq{i}}=\{\mathcal{F}\in\Theta\mid\mult_{\mathcal{F}}{\Theta}\geq{i}\}
\subset{P_{\phi}}.
$$
By the next proposition, we have
$$
S(P,\mathcal{L})\overset{\simeq}{\longrightarrow}
S_{2}(P_{\phi},\mathcal{L}_{\phi})
=\bigcap_{\xi\in\Pic^{0}{(C)}}W_{\xi,\mathrm{sing}}
=B_{2}\cap{P_{\phi}}.
$$
\begin{proposition}\label{160112_5Mar25}
 When $i\geq1$,
 $$
 S_{i}(P_{\phi},\mathcal{L}_{\phi})
 =\bigcap_{\xi\in\Pic^{0}{(C)}}
 W_{\xi,\geq{i}}
 =B_{i}\cap{P_{\phi}}.
 $$
\end{proposition}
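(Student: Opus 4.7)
The plan is to establish the two equalities $S_i(P_\phi,\mathcal{L}_\phi) = \bigcap_\xi W_{\xi,\geq i}$ and $\bigcap_\xi W_{\xi,\geq i} = B_i \cap P_\phi$ separately, mirroring the case $i = 1$ already covered by Propositions~\ref{105542_26Feb25} and~\ref{165606_22Feb25}.

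For the first equality, the inclusion $S_i \subseteq \bigcap_\xi W_{\xi,\geq i}$ is immediate since each $W_\xi$ lies in $|\mathcal{L}_\phi|$. For the reverse, the discussion preceding the proposition notes that $H^0(P_\phi,\mathcal{L}_\phi)$ is generated by the defining sections $s_\xi$ of $W_\xi$ (this is the $P_\phi$-translation of Mumford's identity $\pi^{*}s = \sum \mathrm{pr}_1^{*}\alpha_j \otimes \mathrm{pr}_2^{*}\beta_j$). Because vanishing to order $\geq i$ at a fixed point $\mathcal{F}$ is a linear condition on sections, if every $s_\xi$ vanishes to order $\geq i$ at $\mathcal{F}$ then so does every section of $\mathcal{L}_\phi$, i.e.\ $\mult_\mathcal{F}\Theta \geq i$ for all $\Theta \in |\mathcal{L}_\phi|$.

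For the inclusion $B_i \cap P_\phi \subseteq \bigcap_\xi W_{\xi,\geq i}$, write $\mathcal{F} = \mathcal{O}_{\tilde C}(D) \otimes \phi^{*}\mathcal{G}$ with $D \geq 0$ of degree $n-2i$ and $\deg\mathcal{G} = g-1+i$. For any $\xi \in \Pic^{0}(C)$, multiplication by the section cutting out $D$ gives an injection $\phi^{*}(\mathcal{G}\otimes\xi) \hookrightarrow \mathcal{F}\otimes\phi^{*}\xi$, so by the projection formula and Riemann--Roch on $C$,
$$
h^{0}(\tilde C,\mathcal{F}\otimes\phi^{*}\xi)\;\geq\;h^{0}(C,\mathcal{G}\otimes\xi)\;\geq\;\deg(\mathcal{G}\otimes\xi)-g+1\;=\;i.
$$
Riemann's singularity theorem then yields $\mult_\mathcal{F} t_{\phi^{*}\xi}^{-1}(W_{\tilde C}) = h^{0}(\tilde C,\mathcal{F}\otimes\phi^{*}\xi)\geq i$, and since restricting a Cartier divisor from $\Pic^{\tilde g-1}(\tilde C)$ to the subvariety $P_\phi$ cannot decrease its multiplicity at a point, we conclude $\mult_\mathcal{F} W_\xi \geq i$.

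The reverse inclusion $\bigcap_\xi W_{\xi,\geq i} \subseteq B_i$ is the main obstacle, and I would prove it by induction on $i$ with base case Proposition~\ref{165606_22Feb25}. At step $i$, take $\mathcal{F}\in\bigcap_\xi W_{\xi,\geq i}$; since $W_{\xi,\geq i} \subseteq W_{\xi,\geq i-1}$, the induction hypothesis places $\mathcal{F}$ in $B_{i-1}\cap P_\phi$, so $\mathcal{F} = \mathcal{O}_{\tilde C}(D') \otimes \phi^{*}\mathcal{G}'$ with $D'\geq 0$ of degree $n-2(i-1)$. Upgrading to $\mathcal{F}\in B_i$ is equivalent to producing a fiber $\phi^{-1}(x) = p_1+p_2$ of $\phi$ contained in $D'$ as a sub-divisor, since then $D = D'-p_1-p_2$ and $\mathcal{G} = \mathcal{G}'\otimes\mathcal{O}_C(x)$ exhibit $\mathcal{F}$ as a point of $B_i$. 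The hard point is that $\mult_\mathcal{F} W_\xi$ merely bounds $h^{0}(\tilde C,\mathcal{F}\otimes\phi^{*}\xi)$ from below and need not equal it, so the needed cohomological information cannot be read off from multiplicities alone. My approach is to differentiate Mumford's identity for $\pi^{*}s$ along the $J_C$-direction, linking the higher-order vanishing of sections of $\mathcal{L}_\phi$ at $\mathcal{F}$ to the Riemann--Kempf description of the tangent cone of $W_{\tilde C}$ at $\mathcal{F}\otimes\phi^{*}\xi$; the extra tangential data arising from the $J_C$-factor should force $D'$ to contain a fiber of $\phi$ and complete the induction.
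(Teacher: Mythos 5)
Your first equality and the inclusion $B_{i}\cap P_{\phi}\subseteq\bigcap_{\xi}W_{\xi,\geq i}$ are correct and essentially the same as the paper's (the latter is Lemma~\ref{160809_4Mar25}). The genuine gap is in the reverse inclusion $\bigcap_{\xi}W_{\xi,\geq i}\subseteq B_{i}$, which you rightly single out as the main obstacle but do not actually prove: ``differentiate Mumford's identity along the $J_{C}$-direction \dots\ should force $D'$ to contain a fiber'' is a hope, not an argument, and no mechanism is given for extracting the required conclusion from it.

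What the paper does at this step, after the same induction reducing to $\mathcal{F}=\mathcal{O}_{\tilde C}(D)\otimes\phi^{*}\mathcal{G}\in B_{i-1}\cap P_{\phi}$ with $\deg\mathcal{G}=g+i-2$, is to choose a single $\xi_{0}\in\Pic^{0}(C)$ with $h^{0}(\mathcal{G}\otimes\xi_{0})=h^{0}(\mathcal{G}\otimes\xi_{0}^{\vee})=i-1$ (the generic values in that degree), which puts $\mathcal{G}\otimes\xi_{0}$ in $K_{i-1}(\phi(D),\eta_{\phi})$; Lemma~\ref{122252_5Mar25} then shows that if $D$ contains no fiber $\phi^{*}p$, the multiplicity of $W_{\xi_{0}}$ at $\mathcal{F}$ is \emph{exactly} $i-1$, contradicting $\mathcal{F}\in W_{\xi_{0},\geq i}$. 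The content of that lemma is precisely the difficulty you flag: $\mult_{\mathcal{F}}W_{\xi_{0}}$ could a priori exceed $h^{0}(\tilde C,\mathcal{F}\otimes\phi^{*}\xi_{0})$ because $W_{\xi_{0}}$ is a restriction to $P_{\phi}$, and this is ruled out by computing the Riemann--Kempf tangent cone of $t_{\phi^{*}\xi_{0}}^{-1}(W_{\tilde C})$ explicitly via Mumford's exact sequence (Lemma~\ref{144152_27Feb25}) and checking that its restriction to $T_{P_{\phi},\mathcal{F}}\simeq H^{0}(\Omega_{C}^{1}\otimes\eta_{\phi})^{\vee}$ is the nonzero determinantal hypersurface $\mathcal{V}_{\phi(D),\mathcal{G}\otimes\xi_{0}}$ of degree $i-1$. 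Without this exact multiplicity computation for a well-chosen $\xi_{0}$ --- or some genuine substitute for it --- your induction does not close.
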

We prepare some lemmas to prove Proposition~\ref{160112_5Mar25}.
\begin{lemma}\label{160809_4Mar25}
 If $\mathcal{F}\in{B_{i}\cap{P_{\phi}}}$, then
 $\mult_{\mathcal{F}}{W_{\xi}}\geq{i}$
 for any $\xi\in\Pic^{0}{(C)}$.
\end{lemma}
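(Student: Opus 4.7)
The plan is to reduce the multiplicity estimate on $W_{\xi}\subset{P_{\phi}}$ to a lower bound on the number of global sections $h^{0}(\tilde{C},\mathcal{F}\otimes\phi^{*}\xi)$ via the Riemann--Kempf singularity theorem on $\Pic^{\tilde{g}-1}{(\tilde{C})}$, and then deduce this lower bound from the explicit form of $\mathcal{F}$ as an element of $B_{i}$.

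First I would recall that $W_{\xi}=t_{\phi^{*}\xi}^{-1}(W_{\tilde{C}})\vert_{P_{\phi}}$ is the restriction to $P_{\phi}$ of a translate of the canonical theta divisor on $\Pic^{\tilde{g}-1}{(\tilde{C})}$. Since $W_{\xi}\in|\mathcal{L}_{\phi}|$ is a non-trivial Cartier divisor on $P_{\phi}$, the ambient divisor $t_{\phi^{*}\xi}^{-1}(W_{\tilde{C}})$ does not contain $P_{\phi}$, so restriction to $P_{\phi}$ can only increase the multiplicity at $\mathcal{F}$. Combined with translation invariance of multiplicity, this yields
\begin{equation*}
 \mult_{\mathcal{F}}{W_{\xi}}\,\geq\,
 \mult_{\mathcal{F}\otimes\phi^{*}\xi}{W_{\tilde{C}}}\,=\,
 h^{0}(\tilde{C},\mathcal{F}\otimes\phi^{*}\xi),
\end{equation*}
where the last equality is the Riemann--Kempf singularity theorem applied to $W_{\tilde{C}}$.

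Next, writing $\mathcal{F}=\mathcal{O}_{\tilde{C}}(D)\otimes\phi^{*}\mathcal{G}$ with $D\geq0$ of degree $n-2i$ on $\tilde{C}$ and $\mathcal{G}\in\Pic^{g-1+i}{(C)}$, I would bound
\begin{equation*}
 h^{0}(\tilde{C},\mathcal{F}\otimes\phi^{*}\xi)\,\geq\,
 h^{0}(\tilde{C},\phi^{*}(\mathcal{G}\otimes\xi))\,\geq\,
 h^{0}(C,\mathcal{G}\otimes\xi)\,\geq\,i,
\end{equation*}
using respectively that $D$ is effective (so multiplication by its defining section gives an injection of global sections), the injectivity of the pullback $\phi^{*}\colon{H^{0}(C,\mathcal{G}\otimes\xi)\hookrightarrow{H^{0}(\tilde{C},\phi^{*}(\mathcal{G}\otimes\xi))}}$ under the surjective finite morphism $\phi$, and Riemann--Roch on $C$ applied to the line bundle $\mathcal{G}\otimes\xi$ of degree $g-1+i$ (which gives $h^{0}-h^{1}=i$). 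Composing this with the previous display gives the claim.

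The argument is a chain of elementary inequalities, so no step is a serious obstacle; the main technical inputs are the Riemann--Kempf theorem and the multiplicity comparison under restriction to $P_{\phi}$, the latter being the only point where one needs to check a separate fact, namely that $P_{\phi}\not\subset{t_{\phi^{*}\xi}^{-1}(W_{\tilde{C}})}$, which is guaranteed by $W_{\xi}$ being a well-defined member of $|\mathcal{L}_{\phi}|$.
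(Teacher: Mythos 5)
Your proposal is correct and follows essentially the same route as the paper: reduce $\mult_{\mathcal{F}}W_{\xi}$ to $h^{0}(\tilde{C},\mathcal{F}\otimes\phi^{*}\xi)$ via restriction, translation, and the Riemann singularity theorem, then bound that by $h^{0}(C,\mathcal{G}\otimes\xi)\geq i$ using the degree. The only cosmetic difference is that you compare sections on $\tilde{C}$ via effectivity of $D$ and injectivity of $\phi^{*}$, whereas the paper phrases the same comparison through the sheaf inclusion $\mathcal{G}\otimes\xi\subset\phi_{*}(\mathcal{F}\otimes\phi^{*}\xi)$ on $C$.
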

\begin{proof}
 There exist $D\in\tilde{C}^{(n-2i)}$ and
 $\mathcal{G}\in\Pic^{g-1+i}{(\mathcal{G})}$ such that
 $\mathcal{F}=\mathcal{O}_{\tilde{C}}(D)\otimes\phi^{*}\mathcal{G}$.
 Since
 $$
 \mathcal{G}\otimes\xi\subset
 \phi_{*}\phi^{*}(\mathcal{G}\otimes\xi)
 \subset
 \phi_{*}(\mathcal{O}_{\tilde{C}}(D)\otimes
 \phi^{*}(\mathcal{G}\otimes\xi))
 =\phi_{*}(\mathcal{F}\otimes\phi^{*}\xi),
 $$
 by the Riemann's singularity theorem,
 we have
 $$
 \mult_{\mathcal{F}}{W_{\xi}}\geq
 \mult_{\mathcal{F}}{t_{\phi^{*}\xi}^{-1}(W_{\tilde{C}})}
 =h^{0}(\tilde{C},\mathcal{F}\otimes\phi^{*}\xi)
 \geq{h^{0}(C,\mathcal{G}\otimes\xi)}
 \geq{i}.
 $$
\end{proof}
The following Mumford's exact sequence is essential for the proof of Proposition~\ref{160112_5Mar25}.
\begin{lemma}[\cite{M2} p338]\label{144152_27Feb25}
 Let $D$ be an effective divisor on $\tilde{C}$.
 If $\phi^{*}p\nleq{D}$ for any $p\in{C}$, then there exists an exact sequence
 $$
 0\longrightarrow
 \mathcal{O}_{C}\longrightarrow
 \phi_{*}\mathcal{O}_{\tilde{C}}(D)\longrightarrow
 \Nm{(\mathcal{O}_{\tilde{C}}(D))}\otimes\eta_{\phi}^{\vee}
 \longrightarrow0
 $$
 of locally free sheaves on $C$.
\end{lemma}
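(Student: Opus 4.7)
The plan is to construct the sequence in three stages: exhibit the left-hand injection, verify that its cokernel is locally free of rank $1$ by a local check on $C$, and identify that line bundle by a determinant computation. For the injection $\mathcal{O}_{C}\hookrightarrow\phi_{*}\mathcal{O}_{\tilde{C}}(D)$, since $D\geq 0$ the canonical section $1\in H^{0}(\tilde{C},\mathcal{O}_{\tilde{C}}(D))$ gives an inclusion $\mathcal{O}_{\tilde{C}}\hookrightarrow\mathcal{O}_{\tilde{C}}(D)$, and pushing forward along the finite morphism $\phi$ (which is exact) produces $\phi_{*}\mathcal{O}_{\tilde{C}}\hookrightarrow\phi_{*}\mathcal{O}_{\tilde{C}}(D)$; composing with the natural split inclusion $\mathcal{O}_{C}\hookrightarrow\phi_{*}\mathcal{O}_{\tilde{C}}$ (dual to the trace) yields the desired map.

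Because $\phi_{*}\mathcal{O}_{\tilde{C}}(D)$ is locally free of rank $2$ on the smooth curve $C$, the cokernel $Q$ has rank $1$ and is locally free exactly when torsion-free, which reduces to checking that the fiber map $\mathcal{O}_{C}\otimes k(p)\to\phi_{*}\mathcal{O}_{\tilde{C}}(D)\otimes k(p)$ is injective for every $p\in C$. I verify this locally. At an unramified point $p$ with $\phi^{-1}(p)=\{q_{1},q_{2}\}$, $\phi_{*}\mathcal{O}_{\tilde{C}}(D)_{p}$ is a direct sum of two rank-one $\mathcal{O}_{C,p}$-modules, the image of $1$ in the $i$-th summand is a uniformizer raised to the power $a_{i}:=\ord_{q_{i}}D$ times a generator, and its residue in the fiber is nonzero iff some $a_{i}=0$, i.e.\ iff $\phi^{*}p=q_{1}+q_{2}\nleq D$. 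At a ramified point $p$ with $\phi^{-1}(p)=\{q\}$ and $\phi^{*}p=2q$, choose uniformizers $t$ at $q$ and $u=t^{2}$ at $p$; setting $s=t^{-a}$ with $a=\ord_{q}D$, the stalk $\phi_{*}\mathcal{O}_{\tilde{C}}(D)_{p}$ is $\mathcal{O}_{C,p}$-free on the basis $\{s,ts\}$, and the image of $1$ equals $t^{a}s$. Expanding $t^{a}$ in terms of $u$ and $t$ shows that this residue modulo $\mathfrak{m}_{p}$ vanishes iff $a\geq 2$, i.e.\ iff $\phi^{*}p=2q\leq D$. Hence the hypothesis $\phi^{*}p\nleq D$ at every $p$ is precisely what forces $Q$ to be locally free.

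To identify $Q$, take determinants in the exact sequence $0\to\mathcal{O}_{C}\to\phi_{*}\mathcal{O}_{\tilde{C}}(D)\to Q\to 0$ to obtain $Q\simeq\det\phi_{*}\mathcal{O}_{\tilde{C}}(D)$. The standard norm--determinant formula for a finite flat morphism, $\det\phi_{*}L\simeq\Nm(L)\otimes\det\phi_{*}\mathcal{O}_{\tilde{C}}$, together with the eigenspace decomposition $\phi_{*}\mathcal{O}_{\tilde{C}}=\mathcal{O}_{C}\oplus\eta_{\phi}^{\vee}$ for the covering involution (whence $\det\phi_{*}\mathcal{O}_{\tilde{C}}\simeq\eta_{\phi}^{\vee}$), yields $Q\simeq\Nm(\mathcal{O}_{\tilde{C}}(D))\otimes\eta_{\phi}^{\vee}$, as required. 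The main obstacle is the local analysis at a ramified point, where $\phi_{*}\mathcal{O}_{\tilde{C}}(D)_{p}$ is not a direct sum indexed by geometric preimages but a rank-two $\mathcal{O}_{C,p}$-module on the basis $\{s,ts\}$; one has to keep track of the parity of $a=\ord_{q}D$ in order to determine which basis vector captures the image of $1$ and when its residue modulo $\mathfrak{m}_{p}$ is nontrivial. The unramified case is immediate, and the determinant identification uses only standard multiplicativity of determinants in exact sequences.
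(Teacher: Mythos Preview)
Your proof is correct. The paper does not supply its own proof of this lemma but simply cites Mumford \cite[p.~338]{M2}; your argument follows the standard route (essentially Mumford's): build the injection from the canonical section of the effective divisor, verify local freeness of the cokernel by a fiberwise check that separates the unramified and ramified cases, and identify the quotient line bundle via the determinant formula $\det\phi_{*}L\simeq\Nm(L)\otimes\det\phi_{*}\mathcal{O}_{\tilde{C}}$ together with $\phi_{*}\mathcal{O}_{\tilde{C}}\simeq\mathcal{O}_{C}\oplus\eta_{\phi}^{\vee}$.
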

In the following, we write $H^{i}(\mathcal{G})=H^{i}(C,\mathcal{G})$
for any sheaf $\mathcal{G}$ on $C$.
When $0<2i\leq{n}$,
for $E\in{C^{(n-2i)}}$ and $\eta\in\Pic^{n}{(C)}$, we define
$$
K_{i}(E,\eta)=\{\mathcal{G}\in\Pic^{g-1+i}{(C)}\mid
h^{0}(\mathcal{G})=
h^{0}(\Omega_{C}^{1}\otimes\eta\otimes\mathcal{O}_{C}(-E)
\otimes\mathcal{G}^{\vee})=i\}.
$$
For $\mathcal{G}\in{K_{i}(E,\eta)}$, we define the hypersurface
$\mathcal{V}_{E,\mathcal{G}}\subset{H^{0}(\Omega_{C}^{1}\otimes\eta)^{\vee}}$
of degree $i$ by
$$
\det{
\begin{pmatrix}
 \mu(s_{1},t_{1})& &\mu(s_{1},t_{i})\\
 &\cdots& \\
 \mu(s_{i},t_{1})& &\mu(s_{i},t_{i})\\
\end{pmatrix}
\in\Sym^{i}{H^{0}(\Omega_{C}^{1}\otimes\eta)},
}
$$
where $s_{1},\dots,s_{i}\in{H^{0}(\mathcal{G})}$ and
$t_{1},\dots,t_{i}\in{H^{0}(\Omega_{C}^{1}\otimes\eta\otimes\mathcal{O}_{C}(-E)
\otimes\mathcal{G}^{\vee})}$
are bases, and
$$
\mu:\,H^{0}(\mathcal{G})\times
H^{0}(\Omega_{C}^{1}\otimes\eta\otimes\mathcal{O}_{C}(-E)
\otimes\mathcal{G}^{\vee})
\longrightarrow
H^{0}(\Omega_{C}^{1}\otimes\eta)
$$
denotes the composition of the multiplication map and the injective homomorphism
$H^{0}(\Omega_{C}^{1}\otimes\eta\otimes\mathcal{O}_{C}(-E))\hookrightarrow
{H^{0}(\Omega_{C}^{1}\otimes\eta)}$.
We note that $\mathcal{V}_{E,\mathcal{G}}$ does not depend on the choice of
bases $s_{1},\dots,s_{i}$ and $t_{1},\dots,t_{i}$.
\begin{lemma}\label{122252_5Mar25}
 Let $D\in\tilde{C}^{(n-2i)}$ and $\mathcal{G}\in\Pic^{g-1+i}{(C)}$
 be such that
 $\mathcal{F}=\mathcal{O}_{\tilde{C}}(D)\otimes\phi^{*}\mathcal{G}
 \in{P_{\phi}}$.
 For $\xi\in\Pic^{0}{(C)}$, the following conditions are equivalent:
 \begin{enumerate}
  \item $\mult_{\mathcal{F}}{W_{\xi}}=i$.
  \item $\mathcal{G}\otimes\xi\in{K_{i}(\phi(D),\eta_{\phi})}$,
	and $\phi^{*}p\nleq{D}$ for any $p\in{C}$.
 \end{enumerate}
 If these conditions are satisfied, then the tangent cone
 $\mathcal{T}_{W_{\xi},\mathcal{F}}$
 of $W_{\xi}$ at $\mathcal{F}$ is
 $$
 \mathcal{V}_{\phi(D),\mathcal{G}\otimes\xi}
 \subset{H^{0}(\Omega_{C}^{1}\otimes\eta_{\phi})^{\vee}}
 \simeq{T_{P_{\phi},\mathcal{F}}}.
 $$
\end{lemma}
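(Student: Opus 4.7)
The plan is to pull everything back to the base curve $C$ via the pushforward $\phi_{*}$ using Mumford's exact sequence (Lemma~\ref{144152_27Feb25}), reading off the multiplicity from Riemann--Roch on $C$ and the tangent cone from the Kempf determinantal formula. Throughout I write $\mathcal{M}=\mathcal{G}\otimes\xi$ and $E=\phi(D)$, and denote by $\iota:\tilde{C}\to\tilde{C}$ the covering involution of $\phi$. Since $W_{\xi}$ is a translate of $W_{\tilde{C}}$, Riemann's singularity theorem reduces the multiplicity to $h^{0}(\tilde{C},\mathcal{F}\otimes\phi^{*}\xi)$.

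For the equivalence $(1)\Leftrightarrow(2)$, I first handle the divisor condition: if $\phi^{*}p\leq D$ for some $p\in C$, writing $D=\phi^{*}p+D'$ exhibits $\mathcal{F}$ as a point of $B_{i+1}\cap P_{\phi}$, so Lemma~\ref{160809_4Mar25} gives $\mult_{\mathcal{F}}W_{\xi}\geq i+1$, forcing (1) to fail. Under $\phi^{*}p\nleq D$, applying $\phi_{*}$ to $\mathcal{F}\otimes\phi^{*}\xi=\mathcal{O}_{\tilde{C}}(D)\otimes\phi^{*}\mathcal{M}$, twisting Mumford's sequence by $\mathcal{M}$, and using $\Nm(\mathcal{F})=\Omega_{C}^{1}\otimes\eta_{\phi}$ produces
$$
0\longrightarrow\mathcal{M}\longrightarrow\phi_{*}(\mathcal{F}\otimes\phi^{*}\xi)\longrightarrow N\longrightarrow 0,\qquad N=\mathcal{O}_{C}(E)\otimes\eta_{\phi}^{\vee}\otimes\mathcal{M},
$$
with $\deg N=g-1-i$ and, by Serre duality on $C$, $h^{1}(N)=h^{0}(\Omega_{C}^{1}\otimes\eta_{\phi}\otimes\mathcal{O}_{C}(-E)\otimes\mathcal{M}^{\vee})$. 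Since Lemma~\ref{160809_4Mar25} already gives $h^{0}(\mathcal{M})\geq i$, the long exact cohomology sequence forces $h^{0}(\tilde{C},\mathcal{F}\otimes\phi^{*}\xi)=i$ exactly when $h^{0}(\mathcal{M})=i$ and $h^{0}(N)=0$; a Riemann--Roch computation on $N$ turns the latter equality into $h^{1}(N)=i$, and together the two equalities are precisely the condition $\mathcal{M}\in K_{i}(E,\eta_{\phi})$.

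For the tangent cone I invoke Kempf's determinantal formula for the translate $W_{\xi}$: when $h^{0}(\mathcal{F}\otimes\phi^{*}\xi)=i$, the tangent cone at $\mathcal{F}$ inside $H^{0}(\tilde{C},\Omega_{\tilde{C}}^{1})^{\vee}$ is the degree-$i$ hypersurface cut out by $\det(\tilde{\mu}(\tilde{s}_{j},\tilde{t}_{k}))$, where $\tilde{\mu}$ is the natural multiplication pairing on $\tilde{C}$. The sequence above, which is an isomorphism on $H^{0}$ since $h^{0}(N)=0$, identifies $H^{0}(\tilde{C},\mathcal{F}\otimes\phi^{*}\xi)\simeq H^{0}(\mathcal{M})$ via $s\mapsto\sigma_{D}\cdot\phi^{*}s$; dualizing Mumford's sequence (using $\omega_{\phi}=\phi^{*}\eta_{\phi}$ and $h^{1}(\mathcal{M})=0$) similarly matches $H^{0}(\tilde{C},\Omega_{\tilde{C}}^{1}\otimes(\mathcal{F}\otimes\phi^{*}\xi)^{\vee})\simeq H^{0}(\Omega_{C}^{1}\otimes\eta_{\phi}\otimes\mathcal{O}_{C}(-E)\otimes\mathcal{M}^{\vee})$ via $t\mapsto\sigma_{\iota^{*}D}\cdot\phi^{*}t$. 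The hypothesis $\phi^{*}p\nleq D$ gives the divisor identity $D+\iota^{*}D=\phi^{*}E$, hence the section identity $\sigma_{D}\sigma_{\iota^{*}D}=\phi^{*}\sigma_{E}$, and therefore $\tilde{s}\cdot\tilde{t}=\phi^{*}\mu(s,t)$. Projecting $H^{0}(\Omega_{\tilde{C}}^{1})$ onto the Prym cotangent summand $H^{0}(\Omega_{\tilde{C}}^{1})^{-}$ (the $\iota$-anti-invariant part), which via $\phi^{*}$ is identified with $H^{0}(\Omega_{C}^{1}\otimes\eta_{\phi})$, sends $\phi^{*}\mu(s,t)$ to $\mu(s,t)$, and the claim $\mathcal{T}_{W_{\xi},\mathcal{F}}=\mathcal{V}_{E,\mathcal{G}\otimes\xi}$ follows.

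The main obstacle will be the $\iota$-equivariance bookkeeping in the last step: the identification $\Omega_{\tilde{C}}^{1}\simeq\phi^{*}(\Omega_{C}^{1}\otimes\eta_{\phi})$ is not equivariant for the natural $\iota$-actions on the two sides, because of the sign $\iota^{*}\sigma_{R}=-\sigma_{R}$ coming from the ramification divisor $R\subset\tilde{C}$. A careful local computation (modeling $\phi$ as $\tilde{z}\mapsto\tilde{z}^{2}$ near each ramification point) is needed to confirm that $\phi^{*}H^{0}(\Omega_{C}^{1}\otimes\eta_{\phi})$ sits inside the $\iota$-anti-invariant summand of $H^{0}(\Omega_{\tilde{C}}^{1})$, while $\phi^{*}H^{0}(\Omega_{C}^{1})$ sits in the invariant summand. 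Combined with the section identity $\sigma_{D}\sigma_{\iota^{*}D}=\phi^{*}\sigma_{E}$, this sign bookkeeping is the technical heart of the argument.
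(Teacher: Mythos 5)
Your proposal is correct and follows essentially the same route as the paper: the equivalence is obtained from Mumford's exact sequence (Lemma~\ref{144152_27Feb25}) plus Riemann--Roch bookkeeping on $C$, and the tangent cone from the Riemann--Kempf determinantal description transported through the induced isomorphisms on $H^{0}$, exactly as in the paper's proof. Two small remarks: $W_{\xi}$ is the \emph{restriction} to $P_{\phi}$ of a translate of $W_{\tilde{C}}$, not a translate itself, so $\mult_{\mathcal{F}}W_{\xi}$ only a priori dominates $h^{0}(\tilde{C},\mathcal{F}\otimes\phi^{*}\xi)$ and the equality in direction $(2)\Rightarrow(1)$ really does require your final observation that the restricted tangent cone is the proper hypersurface $\mathcal{V}_{\phi(D),\mathcal{G}\otimes\xi}$; and the $\iota$-equivariance sign issue you flag as the technical heart is sidestepped in the paper, which never decomposes $H^{0}(\tilde{C},\Omega^{1}_{\tilde{C}})$ into eigenspaces but instead works directly with the injection $j:H^{0}(\Omega_{C}^{1}\otimes\eta_{\phi})\rightarrow H^{0}(\tilde{C},\Omega^{1}_{\tilde{C}})$, its dual $j^{\vee}$, and the fact that $j^{\vee}$ restricts to an isomorphism on $T_{P_{\phi},\mathcal{F}}$.
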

\begin{proof}
 First, assume condition (1).
 If $\phi^{*}p\leq{D}$ for some $p\in{C}$, then $\mathcal{F}\in{B_{i+1}\cap{P_{\phi}}}$, and by Lemma~\ref{160809_4Mar25}, we have $\mult_{\mathcal{F}}{W_{\xi}}\geq{i+1}$, which contradicts the assumption that $\mult_{\mathcal{F}}{W_{\xi}}=i$.
 Thus, $\phi^{*}p\nleq{D}$ for any $p\in{C}$.
 Since
 $\Nm{(\mathcal{O}_{\tilde{C}}(D))}\otimes\mathcal{G}^{\otimes2}
 =\Omega_{C}^{1}\otimes\eta_{\phi}$,
 by Lemma~\ref{144152_27Feb25}, we obtain the exact sequence
 $$
 0\longrightarrow
 \mathcal{G}\otimes\xi
 \longrightarrow
 \phi_{*}(\mathcal{F}\otimes\phi^{*}\xi)
 \longrightarrow
 \Omega_{C}^{1}\otimes\mathcal{G}^{\vee}\otimes\xi
 \longrightarrow0.
 $$
 Since
 $$
 i=\mult_{\mathcal{F}}{W_{\xi}}\geq
 \mult_{\mathcal{F}}{t_{\phi^{*}\xi}^{-1}(W_{\tilde{C}})}
 =h^{0}(\tilde{C},\mathcal{F}\otimes\phi^{*}\xi)
 \geq
 h^{0}(\mathcal{G}\otimes\xi)\geq{i},
 $$
 it follows that $h^{0}(\mathcal{G}\otimes\xi)=i$ and
 $h^{1}(\mathcal{G}\otimes\xi)=0$.
 Therefore, $\mathcal{G}\otimes\xi\in{K_{i}(\phi(D),\eta_{\phi})}$, because
 \begin{align*}
  &h^{0}(\Omega_{C}^{1}\otimes\eta_{\phi}\otimes
  \Nm{(\mathcal{O}_{\tilde{C}}(-D))}
  \otimes(\mathcal{G}\otimes\xi)^{\vee})
  =h^{0}(\mathcal{G}\otimes\xi^{\vee})\\
  =&h^{0}(\Omega_{C}^{1}\otimes\mathcal{G}^{\vee}\otimes\xi)+i
  =h^{0}(\tilde{C},\mathcal{F}\otimes\phi^{*}\xi)
  -h^{0}(\mathcal{G}\otimes\xi)+i
  =i.
 \end{align*}
 Next, assume condition (2).
 By using Lemma~\ref{144152_27Feb25}, we obtain
 $$
 \mult_{\mathcal{F}}{t_{\phi^{*}\xi}^{-1}(W_{\tilde{C}})}
 =h^{0}(\tilde{C},\mathcal{F}\otimes\phi^{*}\xi)
 =
 h^{0}(\mathcal{G}\otimes\xi)=i.
 $$
 Let $\sigma:\tilde{C}\rightarrow\tilde{C}$ be the covering involution of $\phi$.
 Since $\mathcal{F}\in{P_{\phi}}$, we have
 $$
 \Omega_{\tilde{C}}^{1}=\phi^{*}(\Omega_{C}^{1}\otimes\eta_{\phi})
 =\mathcal{F}\otimes\sigma^{*}\mathcal{F}
 =\mathcal{F}\otimes\mathcal{O}_{\tilde{C}}(\sigma^{*}D)
 \otimes\phi^{*}\mathcal{G}.
 $$
 The injective homomorphisms
 $$
 \mathcal{G}\otimes\xi
 \hookrightarrow
 \phi_{*}(\mathcal{O}_{\tilde{C}}(D))\otimes
 \mathcal{G}\otimes\xi
 =\phi_{*}(\mathcal{O}_{\tilde{C}}(D)\otimes
 \phi^{*}(\mathcal{G}\otimes\xi))
 =\phi_{*}(\mathcal{F}\otimes\phi^{*}\xi)
 $$
 and
 \begin{multline*}
  \Omega_{C}^{1}\otimes\eta_{\phi}\otimes
  \Nm{(\mathcal{O}_{\tilde{C}}(-D))}
  \otimes(\mathcal{G}\otimes\xi)^{\vee}
  =\mathcal{G}\otimes\xi^{\vee}\\
  \hookrightarrow
  \phi_{*}(\mathcal{O}_{\tilde{C}}(\sigma^{*}D))\otimes
  \mathcal{G}\otimes\xi^{\vee}
  =\phi_{*}(\mathcal{O}_{\tilde{C}}(\sigma^{*}D)\otimes
  \phi^{*}(\mathcal{G}\otimes\xi^{\vee}))
  =\phi_{*}(\Omega^{1}_{\tilde{C}}\otimes
  (\mathcal{F}\otimes\phi^{*}\xi)^{\vee})
 \end{multline*}
 induce the isomorphisms
 $$
 H^{0}(C,\mathcal{G}\otimes\xi)
 \overset{\simeq}{\longrightarrow}
 H^{0}(\tilde{C},\mathcal{F}\otimes\phi^{*}\xi)
 $$
 and
 $$
 H^{0}(C,\Omega_{C}^{1}\otimes\eta_{\phi}\otimes
 \Nm{(\mathcal{O}_{\tilde{C}}(-D))}
 \otimes(\mathcal{G}\otimes\xi)^{\vee})
 \overset{\simeq}{\longrightarrow}
 H^{0}(\tilde{C},\Omega^{1}_{\tilde{C}}\otimes
 (\mathcal{F}\otimes\phi^{*}\xi)^{\vee}).
 $$
 From the commutative diagram
 $$
 \begin{array}{ccc}
  H^{0}(\mathcal{G}\otimes\xi)\times
   H^{0}(\Omega_{C}^{1}\otimes\eta_{\phi}\otimes
   \Nm{(\mathcal{O}_{\tilde{C}}(-D))}
   \otimes(\mathcal{G}\otimes\xi)^{\vee})
   &\overset{\mu}{\longrightarrow}&H^{0}(\Omega_{C}^{1}\otimes\eta_{\phi})\\
  \simeq\downarrow\quad&\circlearrowleft&\quad\downarrow{j}\\
  H^{0}(\tilde{C},\mathcal{F}\otimes\phi^{*}\xi)\times
   H^{0}(\tilde{C},\Omega^{1}_{\tilde{C}}\otimes
   (\mathcal{F}\otimes\phi^{*}\xi)^{\vee})
   &\underset{\cup}{\longrightarrow}&H^{0}(\tilde{C},\Omega_{\tilde{C}}^{1}),\\
 \end{array}
 $$
 and by the Riemann-Kempf singularity theorem \cite[Theorem~2]{K},
 the tangent cone
 of $t_{\phi^{*}\xi}^{-1}(W_{\tilde{C}})$ at $\mathcal{F}$ is
 $$
 (j^{\vee})^{-1}(\mathcal{V}_{\phi(D),\mathcal{G}\otimes\xi})
 \subsetneq{H^{0}(\tilde{C},\Omega_{\tilde{C}}^{1})^{\vee}}
 \simeq{T_{\Pic^{\tilde{g}-1}{(\tilde{C})},\mathcal{F}}},
 $$
 where
 $j^{\vee}:\,H^{0}(\tilde{C},\Omega_{\tilde{C}}^{1})^{\vee}\rightarrow
 {H^{0}(\Omega_{C}^{1}\otimes\eta_{\phi})^{\vee}}$
 is the dual of the injective homomorphism
 $$
 j:\,H^{0}(\Omega_{C}^{1}\otimes\eta_{\phi})\longrightarrow
 H^{0}(\tilde{C},\phi^{*}(\Omega_{C}^{1}\otimes\eta_{\phi}))
 =H^{0}(\tilde{C},\Omega_{\tilde{C}}^{1}).
 $$
 Since
 $T_{P_{\phi},\mathcal{F}}\subset{H^{0}(\tilde{C},\Omega_{\tilde{C}}^{1})^{\vee}}$
 is isomorphic to $H^{0}(\Omega_{C}^{1}\otimes\eta_{\phi})^{\vee}$ via $j^{\vee}$, the tangent cone of $W_{\xi}$ at $\mathcal{F}$ is
 $$
 T_{P_{\phi},\mathcal{F}}\cap
 (j^{\vee})^{-1}(\mathcal{V}_{\phi(D),\mathcal{G}\otimes\xi})
 \simeq\mathcal{V}_{\phi(D),\mathcal{G}\otimes\xi}\subsetneq
 H^{0}(\Omega_{C}^{1}\otimes\eta_{\phi})^{\vee},
 $$
 and hence we conclude that $\mult_{\mathcal{F}}{W_{\xi}}=i$.
\end{proof}
\begin{proof}[Proof of Proposition~\ref{160112_5Mar25}]
 Let $\Theta\in|\mathcal{L}_{\phi}|$ be the hypersurface defined by a section $\beta\in{H^{0}(P_{\phi},\mathcal{L}_{\phi})}$.
 We have
 $\Theta_{\geq{i}}\supset
 \bigcap_{\xi\in\Pic^{0}{(C)}}W_{\xi,\geq{i}}$,
 because $\beta$ is a linear combination of defining sections of
 $W_{\xi}$ for $\xi\in\Pic^{0}{(C)}$.
 Hence, we have
 $S_{i}(P_{\phi},\mathcal{L}_{\phi})
 =\bigcap_{\xi\in\Pic^{0}{(C)}}
 W_{\xi,\geq{i}}$.\par
 By Lemma~\ref{160809_4Mar25}, we have
 $
 B_{i}\cap{P_{\phi}}\subset
 \bigcap_{\xi\in\Pic^{0}{(C)}}W_{\xi,\geq{i}}.
 $
 By induction on $i$, we prove
 $B_{i}\cap{P_{\phi}}=
 \bigcap_{\xi\in\Pic^{0}{(C)}}W_{\xi,\geq{i}}$.
 When $i=1$, this is true by Proposition~\ref{165606_22Feb25}.
 We assume $i\geq2$.
 By the induction assumption,
 $$
 \bigcap_{\xi\in\Pic^{0}{(C)}}W_{\xi,\geq{i}}\subset
 \bigcap_{\xi\in\Pic^{0}{(C)}}W_{\xi,\geq{i-1}}
 =B_{i-1}\cap{P_{\phi}}.
 $$
 When $n<2(i-1)$, we have defined that $B_{i-1}=B_{i}=\emptyset$, and it follows that
 $\bigcap_{\xi\in\Pic^{0}{(C)}}W_{\xi,\geq{i}}=\emptyset$
 and
 $B_{i}\cap{P_{\phi}}=\emptyset$.
 When $n\geq2(i-1)$, for
 $\mathcal{F}\in
 \bigcap_{\xi\in\Pic^{0}{(C)}}W_{\xi,\geq{i}}$,
 there exist
 $D\in\tilde{C}^{(n-2(i-1))}$ and $\mathcal{G}\in\Pic^{g-1+(i-1)}{(C)}$
 such that
 $$
 \mathcal{F}=\mathcal{O}_{\tilde{C}}(D)\otimes\phi^{*}\mathcal{G},\quad
 \Omega_{C}^{1}\otimes\eta_{\phi}=
 \Nm{(\mathcal{O}_{\tilde{C}}(D))}\otimes\mathcal{G}^{\otimes2}.
 $$
 Since $\deg{\mathcal{G}}=g+i-2$, we can find $\xi_{0}\in\Pic^{0}{(C)}$
 such that
 $h^{0}(\mathcal{G}\otimes\xi_{0})=i-1$
 and
 $h^{0}(\mathcal{G}\otimes\xi_{0}^{\vee})=i-1$.
 Then, we have
 $\mathcal{G}\otimes\xi_{0}\in{K_{i-1}(\phi(D),\eta_{\phi})}$.
 If $\mathcal{F}\notin{B_{i}}$, then
 $\phi^{*}p\nleq{D}$ for any $p\in{D}$, and
 by Lemma~\ref{122252_5Mar25}, we have
 $\mult_{\mathcal{F}}{W_{\xi_{0}}}=i-1$.
 This is a contradiction to
 $\mathcal{F}\in\bigcap_{\xi\in\Pic^{0}{(C)}}{W_{\xi,\geq{i}}}$,
 hence $\mathcal{F}\in{B_{i}}$.
\end{proof}
\begin{proposition}\label{160207_5Mar25}
 $\dim{(B_{i}\cap{P_{\phi}})}=n-2i$ for  $0<2i\leq{n}$.
 If $0<2i<n$, then $B_{i}\cap{P_{\phi}}$ is irreducible.
 If $0<2i=n$, then $\#{(B_{i}\cap{P_{\phi}})}=4^{g}$.
\end{proposition}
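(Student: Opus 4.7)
The natural parameter space for $B_i\cap P_\phi$ is
$$X=\{(D,\mathcal{G})\in\tilde{C}^{(n-2i)}\times\Pic^{g-1+i}(C)\mid \mathcal{G}^{\otimes2}\otimes\mathcal{O}_C(\phi(D))=\Omega_C^1\otimes\eta_\phi\},$$
together with the map $\alpha:(D,\mathcal{G})\mapsto\mathcal{O}_{\tilde{C}}(D)\otimes\phi^*\mathcal{G}$. Since $\Nm{(\alpha(D,\mathcal{G}))}=\mathcal{O}_C(\phi(D))\otimes\mathcal{G}^{\otimes2}$, the restriction $\alpha\vert_X:X\to B_i\cap P_\phi$ is well-defined and surjective. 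The first projection exhibits $X$ as the pullback of the étale $J_C[2]$-isogeny $[2]:\Pic^{g-1+i}(C)\to\Pic^{2g-2+2i}(C)$ along $D\mapsto\Omega_C^1\otimes\eta_\phi\otimes\mathcal{O}_C(-\phi(D))$, and is therefore an étale $J_C[2]$-cover of degree $4^g$. In particular $X$ is smooth of dimension $n-2i$.

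When $2i=n$, the base $\tilde{C}^{(0)}$ is a single point and $X$ consists of the $4^g$ square roots $\mathcal{G}$ of $\Omega_C^1\otimes\eta_\phi$. The map $\alpha\vert_X$ is then $\mathcal{G}\mapsto\phi^*\mathcal{G}$, and because $n\geq 1$ forces $\phi^*:J_C\to J_{\tilde{C}}$ to be injective, the $4^g$ images are distinct, giving $\#{(B_i\cap P_\phi)}=4^g$.

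For $0<2i<n$, I would deduce irreducibility of $B_i\cap P_\phi$ from irreducibility of $X$ via the surjection $\alpha\vert_X$. Using the standard identification $H_1(\tilde{C}^{(n-2i)},\mathbb{Z})=H_1(\tilde{C},\mathbb{Z})$ (valid because $n-2i\geq 1$), the classifying map of the étale $J_C[2]$-torsor $X\to\tilde{C}^{(n-2i)}$ is the composition
$$H_1(\tilde{C},\mathbb{Z})\xrightarrow{\phi_*}H_1(C,\mathbb{Z})\twoheadrightarrow H_1(C,\mathbb{Z}/2)=J_C[2].$$
The hypothesis $n\geq 1$ makes $\phi_*$ surjective on integral $H_1$: any loop in $C$ can be closed up to a loop in $\tilde{C}$ by splicing in a small loop around a branch point of $\phi$, and the latter is null-homotopic in $C$. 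Consequently $X$ is connected, hence irreducible, and so is its image $B_i\cap P_\phi$; moreover the upper bound $\dim{(B_i\cap P_\phi)}\leq\dim X=n-2i$ is automatic.

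For the matching lower bound I would show that $\alpha\vert_X$ is generically finite. Pick $(D,\mathcal{G})\in X$ with $\phi^*p\nleq{D}$ for every $p\in C$, so that $\mathcal{F}=\alpha(D,\mathcal{G})\notin B_{i+1}$; for generic such $D$ the inequality $n-2i\leq\tilde{g}$ forces $h^0(\tilde{C},\mathcal{O}_{\tilde{C}}(D))=1$, so any other $(D',\mathcal{G}')\in X$ with $\alpha(D',\mathcal{G}')=\mathcal{F}$ satisfies simultaneously $\mathcal{O}_{\tilde{C}}(D-D')=\phi^*(\mathcal{G}'\otimes\mathcal{G}^{-1})$ and $(\mathcal{G}'\otimes\mathcal{G}^{-1})^{\otimes 2}=\mathcal{O}_C(\phi(D)-\phi(D'))$, a rigidity that leaves only finitely many solutions. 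This yields $\dim{(B_i\cap P_\phi)}=n-2i$. The main obstacle is this last generic-finiteness step: the cover structure and monodromy argument are routine once $\phi_*$ on $H_1$ is understood, but controlling possible positive-dimensional fibers of $\alpha\vert_X$ and confining them to the smaller stratum $B_{i+1}$ is where Lemma~\ref{122252_5Mar25} and the norm-condition structure of $P_\phi$ are essential.
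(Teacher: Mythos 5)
Your construction is the same as the paper's: the space you call $X$ is the paper's $Y_{i}$, the degree-$4^{g}$ \'etale cover of $\tilde{C}^{(n-2i)}$, the bijection with the $4^{g}$ square roots when $2i=n$, and the monodromy argument for connectedness via surjectivity of $\phi_{*}$ on $H_{1}$ all match (the paper realizes the classifying map exactly as you describe, by lifting $\iota\circ\phi\circ\gamma$ to $H^{0}(\Omega_{C}^{1})^{\vee}$ and halving the endpoint). Those parts are correct as written.

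The step you leave open, generic finiteness of $\alpha\vert_{X}$, is a genuine gap: the two conditions you display do not by themselves ``rigidify'' the fiber, since a priori $h^{0}(\tilde{C},\mathcal{O}_{\tilde{C}}(D)\otimes\phi^{*}\xi^{\vee})$ could be positive for a positive-dimensional family of $\xi\in\Pic^{0}{(C)}$, each such $\xi$ contributing a whole linear system of divisors $D'$. The paper closes this not with Lemma~\ref{122252_5Mar25} but with Mumford's exact sequence (Lemma~\ref{144152_27Feb25}), and gets injectivity, not just finiteness, over the open set $U_{i}$ of divisors $D$ with $h^{0}(\tilde{C},\mathcal{O}_{\tilde{C}}(D))=1$ and $\phi^{*}p\nleq{D}$ for all $p$: if $\mathcal{O}_{\tilde{C}}(D_{1})\otimes\phi^{*}\mathcal{G}_{1}=\mathcal{O}_{\tilde{C}}(D_{2})\otimes\phi^{*}\mathcal{G}_{2}$ with $D_{1}\in{U_{i}}$, then the quotient term of the sequence for $\mathcal{O}_{\tilde{C}}(D_{1})$ twisted by $\mathcal{G}_{1}\otimes\mathcal{G}_{2}^{\vee}$ is $\Omega_{C}^{1}\otimes\mathcal{G}_{1}^{\vee}\otimes\mathcal{G}_{2}^{\vee}$, of degree $-2i<0$, hence without sections, so
$$
h^{0}(\mathcal{G}_{1}\otimes\mathcal{G}_{2}^{\vee})
=h^{0}(\tilde{C},\mathcal{O}_{\tilde{C}}(D_{1})\otimes\phi^{*}(\mathcal{G}_{1}\otimes\mathcal{G}_{2}^{\vee}))
=h^{0}(\tilde{C},\mathcal{O}_{\tilde{C}}(D_{2}))=1,
$$
forcing the degree-zero sheaf $\mathcal{G}_{1}\otimes\mathcal{G}_{2}^{\vee}$ to be trivial and then $D_{1}=D_{2}$. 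This makes $\alpha\vert_{X}$ birational and yields $\dim{(B_{i}\cap{P_{\phi}})}=n-2i$. (Note also that irreducibility already follows from connectedness of $X$ and surjectivity of $\alpha\vert_{X}$; the finiteness is needed only for the lower bound on the dimension.) You should supply this degree argument to complete the proof.
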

\begin{proof}
 When $0<2i\leq{n}$, we define
 $$
 Y_{i}=\{(D,\mathcal{G})\in\tilde{C}^{(n-2i)}\times\Pic^{g-1+i}{(C)}\mid
 \Nm{(\mathcal{O}_{\tilde{C}}(D))\otimes\mathcal{G}^{\otimes2}}
 =\Omega_{C}^{1}\otimes\eta_{\phi}\}.
 $$
 The projection
 $$
 g_{i}:\,Y_{i}\longrightarrow\tilde{C}^{(n-2i)};\,
 (D,\mathcal{G})\longmapsto{D}
 $$
 is a finite \'{e}tale covering of degree $4^{g}$,
 hence $Y_{i}$ is smooth.
 When $n=2i$,
 $$
 Y_{i}\longrightarrow{B_{i}\cap{P_{\phi}}};\,
 (0,\mathcal{G})\longmapsto\phi^{*}\mathcal{G}
 $$
 is bijective, hence
 $\#{(B_{i}\cap{P_{\phi}})}=\#{Y_{i}}=4^{g}$.
 Assume $n\geq2i+1$.
 Let
 $\gamma:[0,1]\rightarrow\tilde{C}$
 be a loop on $\tilde{C}$,
 and let $\iota$ be the morphism defined by
 $$
 \iota:\,C\longrightarrow{J_{C}=\Pic^{0}{(C)}};\,
 p\longmapsto\mathcal{O}_{C}(p-\phi(\gamma(0))).
 $$
 Then, there is a unique lift
 $\delta:[0,1]\rightarrow{H^{0}(\Omega_{C}^{1})^{\vee}}$
 of $\iota\circ\phi\circ\gamma$
 such that $\delta(0)=0$:
 $$
 \begin{array}{ccccc}
  [0,1]&\overset{\delta}{\longrightarrow}&H^{0}(\Omega_{C}^{1})^{\vee}&
   \longrightarrow&H^{0}(\Omega_{C}^{1})^{\vee}/H_{1}(C,\mathbb{Z})\\
  \gamma\downarrow\quad& &\circlearrowleft& &\quad\downarrow\simeq\\
  \tilde{C}&\underset{\phi}{\longrightarrow}&C&
   \underset{\iota}{\longrightarrow}&J_{C}.\\
 \end{array}
 $$
 For $t\in[0,1]$, we denote by
 $\xi(t)\in\Pic^{0}{(C)}$ the invertible sheaf corresponding to
 $\bigl[\frac{1}{2}\delta(t)\bigr]
 \in{H^{0}(\Omega_{C}^{1})^{\vee}/H_{1}(C,\mathbb{Z})}=J_{C}$.
 If $(D,\mathcal{G})=(\gamma(0)+D',\mathcal{G})\in{Y_{i}}$, then the path
 $$
 [0,1]\longrightarrow{Y_{i}};\,
 t\longmapsto(\gamma(t)+D',\mathcal{G}\otimes\xi(t)^{\vee})
 $$
 on $Y_{i}$ is the lift of the loop
 $$
 [0,1]\longrightarrow\tilde{C}^{(n-2i)};\,
 t\longmapsto\gamma(t)+D'
 $$
 on $\tilde{C}^{(n-2i)}$.
 Since
 $\phi_{*}:H_{1}(\tilde{C},\mathbb{Z})\rightarrow
 {H_{1}(C,\mathbb{Z})}$
 is surjective,
 $$
 H_{1}(\tilde{C},\mathbb{Z})\longrightarrow
 {J_{C,2}=\Ker{([2]_{J_{C}})}};\,
 [\gamma]\longmapsto\xi(1)=\Bigl[\frac{1}{2}\delta(1)\Bigr]
 $$
 is surjective.
 This means that $\pi_{1}(\tilde{C}^{(n-2i)},D)$ acts transitively on the fiber $g_{i}^{-1}(D)$,
 and it follows that $Y_{i}$ is connected.
 Since $Y_{i}$ is also smooth, it is irreducible.
 Let $U_{i}$ be the subset of $\tilde{C}^{(n-2i)}$ defined by
 $$
 U_{i}=\{D\in\tilde{C}^{(n-2i)}\mid
 h^{0}(\tilde{C},\mathcal{O}_{\tilde{C}}(D))=1,\,
 \text{$\phi^{*}p\nleq{D}$ for any $p\in{C}$}\}.
 $$
 Since $0<n-2i\leq2g+n-1=\tilde{g}$,
 it is a non-empty open subset of $\tilde{C}^{(n-2i)}$.
 For
 $(D_{1},\mathcal{G}_{1}),(D_{2},\mathcal{G}_{2})\in
 g_{i}^{-1}(U_{i})$,
 we have
 $$
 h^{0}(\Nm{(\mathcal{O}_{\tilde{C}}(D_{1}))}\otimes\eta_{\phi}^{\vee}
 \otimes\mathcal{G}_{1}\otimes\mathcal{G}_{2}^{\vee})
 =h^{0}(\Omega_{C}^{1}\otimes\mathcal{G}_{1}^{\vee}
 \otimes\mathcal{G}_{2}^{\vee})
 =0,
 $$
 because
 $\deg{(\Omega_{C}^{1}\otimes\mathcal{G}_{1}^{\vee}
 \otimes\mathcal{G}_{2}^{\vee})}=-2i<0$.
 If
 $\mathcal{O}_{\tilde{C}}(D_{1})\otimes\phi^{*}\mathcal{G}_{1}
 =\mathcal{O}_{\tilde{C}}(D_{2})\otimes\phi^{*}\mathcal{G}_{2}$,
 then by Lemma~\ref{144152_27Feb25}, we have
 $$
 h^{0}(\mathcal{G}_{1}\otimes\mathcal{G}_{2}^{\vee})
 =h^{0}(\tilde{C},\mathcal{O}_{\tilde{C}}(D_{1})
 \otimes\phi^{*}(\mathcal{G}_{1}\otimes\mathcal{G}_{2}^{\vee}))
 =h^{0}(\tilde{C},\mathcal{O}_{\tilde{C}}(D_{2}))=1,
 $$
 hence $\mathcal{G}_{1}\otimes\mathcal{G}_{2}^{\vee}=\mathcal{O}_{C}$
 and $D_{1}=D_{2}$.
 This means that the morphism
 $$
 Y_{i}\longrightarrow{B_{i}\cap{P}_{\phi}};\,
 (D,\mathcal{G})\longmapsto\mathcal{O}_{\tilde{C}}(D)\otimes\phi^{*}\mathcal{G}
 $$
 is birational.
 Hence, $B_{i}\cap{P_{\phi}}$ is irreducible, and
 $$
 \dim{(B_{i}\cap{P}_{\phi})}=\dim{Y_{i}}=n-2i.
 $$
\end{proof}
By Proposition~\ref{160112_5Mar25} and Proposition~\ref{160207_5Mar25},
we have the following:
\begin{corollary}\label{154114_14Mar25}
 Let $(P,[\mathcal{L}])$ be the Prym variety of $[\tilde{C}\overset{\phi}{\rightarrow}{C}]\in\mathcal{R}_{g,2n}$ for $n\geq1$ and $g\geq1$.
 \begin{enumerate}
  \item \begin{enumerate}
	 \item $\dim{\Bs{|\mathcal{L}|}}=n-2$ and
	       $\Bs{|\mathcal{L}|}$ is irreducible for $n\geq3$,
	 \item $\dim{\Bs{|\mathcal{L}|}}=0$ and
	       $\#{\Bs{|\mathcal{L}|}}=4^{g}$ for $n=2$,
	 \item $\Bs{|\mathcal{L}|}=\emptyset$ for $n=1$,
	\end{enumerate}
  \item \begin{enumerate}
	 \item $\dim{S(P,\mathcal{L})}=n-4$ and
	       $S(P,\mathcal{L})$ is irreducible for $n\geq5$,
	 \item $\dim{S(P,\mathcal{L})}=0$ and
	       $\#{S(P,\mathcal{L})}=4^{g}$ for $n=4$,
	 \item $S(P,\mathcal{L})=\emptyset$ for $1\leq{n}\leq3$.
	\end{enumerate}
 \end{enumerate}
\end{corollary}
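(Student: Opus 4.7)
The plan is to reduce both parts of the corollary to direct applications of Proposition~\ref{160112_5Mar25} and Proposition~\ref{160207_5Mar25}. The translation $t_{\mathcal{F}_{0}}$ introduced before Proposition~\ref{165606_22Feb25} furnishes isomorphisms $P \simeq P_{\phi}$ and $\Bs{|\mathcal{L}|} \simeq \Bs{|\mathcal{L}_{\phi}|}$, and the same translation identifies the higher base locus $S(P,\mathcal{L})$ with $S_{2}(P_{\phi},\mathcal{L}_{\phi})$. Dimension, irreducibility, and cardinality are all invariant under isomorphism, so it suffices to analyze the right-hand sides.

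For part (1), I would invoke $\Bs{|\mathcal{L}_{\phi}|} = B_{1}\cap P_{\phi}$ from Proposition~\ref{165606_22Feb25} (equivalently, the $i=1$ case of Proposition~\ref{160112_5Mar25}), and then specialize Proposition~\ref{160207_5Mar25} to $i=1$. When $n\geq3$, the inequalities $0<2\cdot1<n$ give irreducibility and $\dim(B_{1}\cap P_{\phi})=n-2$; when $n=2$, the equality $2i=n$ yields $\#(B_{1}\cap P_{\phi})=4^{g}$; and when $n=1$, the condition $2i\leq n$ fails for $i=1$, so by convention $B_{1}=\emptyset$ and hence $\Bs{|\mathcal{L}|}=\emptyset$.

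For part (2), I would apply Proposition~\ref{160112_5Mar25} with $i=2$ to obtain $S(P,\mathcal{L})\simeq S_{2}(P_{\phi},\mathcal{L}_{\phi})=B_{2}\cap P_{\phi}$, and then specialize Proposition~\ref{160207_5Mar25} to $i=2$. When $n\geq5$, the inequalities $0<4<n$ give irreducibility and $\dim(B_{2}\cap P_{\phi})=n-4$; when $n=4$, the equality $2i=n$ gives $\#(B_{2}\cap P_{\phi})=4^{g}$; and when $1\leq n\leq3$, the condition $2i\leq n$ fails for $i=2$, so $B_{2}=\emptyset$ and hence $S(P,\mathcal{L})=\emptyset$.

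No substantial obstacle remains at this point, since all of the geometric content---Mumford's exact sequence, the Riemann--Kempf singularity theorem, and the connectedness of the \'{e}tale cover $Y_{i}\to \tilde{C}^{(n-2i)}$ governed by the surjectivity of $\phi_{*}:H_{1}(\tilde{C},\mathbb{Z})\to H_{1}(C,\mathbb{Z})$ modulo $2$---has already been absorbed into the two input propositions. The corollary is therefore a bookkeeping step that records the cases $i=1,2$ in the form required for the dimension estimates in the subsequent sections.
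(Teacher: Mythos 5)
Your proposal is correct and matches the paper exactly: the paper states this corollary as an immediate consequence of Proposition~\ref{160112_5Mar25} and Proposition~\ref{160207_5Mar25}, and your case analysis ($i=1$ for the base locus, $i=2$ for $S(P,\mathcal{L})$, with the translation $t_{\mathcal{F}_{0}}$ transporting everything to $P_{\phi}$) is precisely the intended bookkeeping.
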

\section{Quadric hypersurfaces containing projective curves}\label{165857_20Mar25}
Let $C$ be a projective smooth curve of genus $g$, and let $\eta$ be an invertible sheaf on $C$ of degree
$n\geq3$.
By \cite[p55, Corollary]{M1}, the invertible sheaf $\Omega_{C}^{1}\otimes\eta$
is very ample, and the multiplication map
$$
\rho_{i}:\,\Sym^{i}{H^{0}(\Omega_{C}^{1}\otimes\eta)}
\longrightarrow
H^{0}((\Omega_{C}^{1}\otimes\eta)^{\otimes{i}})
$$
is surjective for $i\geq1$.
When $0<{2i}\leq{n}$, we have defined the hypersurface
$\mathcal{V}_{E,\mathcal{G}}\subset{H^{0}(\Omega_{C}^{1}\otimes\eta)^{\vee}}
=\Spec{(\Sym{H^{0}(\Omega_{C}^{1}\otimes\eta)})}$
for $E\in{C^{(n-2i)}}$ and $\mathcal{G}\in{K_{i}(E,\eta)}$.
Let
$Q_{E,\mathcal{G}}\subset{\mathbb{P}(H^{0}(\Omega_{C}^{1}\otimes\eta))}
=\Proj{(\Sym{H^{0}(\Omega_{C}^{1}\otimes\eta)})}$
be the projectivization of
$\mathcal{V}_{E,\mathcal{G}}$.
Then we have
$$
Q_{E,\mathcal{G}}=\bigcup_{D\in|\mathcal{G}|}
\mathbb{P}(H^{0}(\Omega_{C}^{1}\otimes\eta)/V_{E+D})
=\bigcup_{D\in|\Omega_{C}^{1}\otimes\eta\otimes\mathcal{O}_{C}(-E)
\otimes\mathcal{G}^{\vee}|}
\mathbb{P}(H^{0}(\Omega_{C}^{1}\otimes\eta)/V_{E+D})
$$
in $\mathbb{P}(H^{0}(\Omega_{C}^{1}\otimes\eta))$,
where
$$
V_{E+D}=\Image{(H^{0}(\Omega_{C}^{1}\otimes\eta\otimes\mathcal{O}_{C}(-E-D))
\hookrightarrow{H^{0}(\Omega_{C}^{1}\otimes\eta)})}.
$$
Let
$$
\Phi=\Phi_{|\Omega_{C}^{1}\otimes\eta|}:\,
C\longrightarrow\mathbb{P}(H^{0}(\Omega_{C}^{1}\otimes\eta))
=\mathbb{P}^{d-1}
$$
be the closed immersion defined by the very ample linear system $|\Omega_{C}^{1}\otimes\eta|$.
When $i\geq2$,
the defining section of $Q_{E,\mathcal{G}}\subset{\mathbb{P}^{d-1}}$
is contained in
$$
\Ker{(\rho_{i})}\subset
\Sym^{i}{H^{0}(\Omega_{C}^{1}\otimes\eta)}
={H^{0}(\mathbb{P}^{d-1},\mathcal{O}_{\mathbb{P}^{d-1}}(i))},
$$
hence, by the commutative diagram
$$
\begin{array}{ccc}
 C&\overset{\Phi_{|(\Omega_{C}^{1}\otimes\eta)^{\otimes{i}}|}}
  {\longrightarrow}&
  \mathbb{P}(H^{0}((\Omega_{C}^{1}\otimes\eta)^{\otimes{i}}))\\
 \Phi\downarrow\quad&
  \circlearrowleft&\downarrow\rho_{i}^{\vee}\\
 \mathbb{P}(H^{0}(\Omega_{C}^{1}\otimes\eta))=\mathbb{P}^{d-1}&
  \underset{|\mathcal{O}_{\mathbb{P}^{d-1}}(i)|}{\longrightarrow}&
  \mathbb{P}(\Sym^{i}{H^{0}(\Omega_{C}^{1}\otimes\eta)})
  =\mathbb{P}(H^{0}(\mathcal{O}_{\mathbb{P}^{d-1}}(i))),\\
\end{array}
$$
we have
$\Phi(C)\subset{Q_{E,\mathcal{G}}}$.
\begin{proposition}\label{180553_6Mar25}
 Let $\Sigma\subset{C}$ be a finite subset.
 If $n\geq4$ and $C$ is an elliptic curve or a hyperelliptic curve,
 then $\Ker{(\rho_{2})}$ is generated by
 the defining quadratics of $Q_{E,\mathcal{G}}$ for
 $E\in{(C\setminus\Sigma)^{(n-4)}}$ and $\mathcal{G}\in{K_{2}(E,\eta)}$.
\end{proposition}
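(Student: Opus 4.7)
The plan is to identify $\Ker(\rho_2)$ with the degree-$2$ part of the homogeneous ideal of $\Phi(C)$ in $\mathbb{P}^{d-1}$ and then show that the defining quadratics of the $Q_{E,\mathcal{G}}$'s linearly span this space. First I would verify that $\Omega_C^{1}\otimes\eta$ is normally generated: since $\deg(\Omega_C^{1}\otimes\eta)=2g-2+n\geq 2g+2$ when $n\geq 4$, this follows from Castelnuovo--Mumford. Hence $\rho_2$ is surjective and $\Ker(\rho_2)\simeq H^{0}(\mathbb{P}^{d-1},\mathcal{I}_{\Phi(C)}(2))$. The commutative diagram displayed before the statement already shows $\Phi(C)\subset Q_{E,\mathcal{G}}$ for every $(E,\mathcal{G})$, so each defining quadratic lies in $\Ker(\rho_2)$; only the spanning assertion remains.

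For the elliptic case $g=1$, $\Phi(C)\subset\mathbb{P}^{n-1}$ is an elliptic normal curve of degree $n$ with $\dim\Ker(\rho_2)=n(n-3)/2$. Since every degree-$2$ line bundle on $C$ satisfies $h^{0}=2$, we have $K_{2}(E,\eta)=\Pic^{2}(C)$ unconditionally, and each $Q_{E,\mathcal{G}}$ is the rank-$\leq 4$ Koszul quadric associated to the factorization $\eta\otimes\mathcal{O}_{C}(-E)=\mathcal{G}\otimes\mathcal{G}'$ into two base-point-free pencils. I plan to invoke the classical fact that the homogeneous ideal of an elliptic normal curve of degree $\geq 4$ is generated in degree $2$ by precisely such rank-$4$ determinantal quadrics, then translate this into the spanning statement. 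In practice, I would fix a generic $E_{0}$ and first analyze the $1$-parameter family $\{Q_{E_{0},\mathcal{G}}\}_{\mathcal{G}\in\Pic^{2}(C)}$ via the base-point-free pencil trick, then vary $E$ over $C^{(n-4)}$ and use a dimension count to conclude that the resulting span exhausts $\Ker(\rho_2)$.

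For the hyperelliptic case $g\geq 2$, let $\pi:C\rightarrow\mathbb{P}^{1}$ be the hyperelliptic cover with pencil $h$. The pushforward $\pi_{*}(\Omega_{C}^{1}\otimes\eta)$ is a rank-$2$ bundle on $\mathbb{P}^{1}$, and $\Phi(C)$ lies on the rational normal scroll $S=\mathbb{P}(\pi_{*}(\Omega_{C}^{1}\otimes\eta))$ of dimension $2$ and degree $d-2$ in $\mathbb{P}^{d-1}$. The ideal of $S$ is generated in degree $2$ by the $2\times 2$ minors of a suitable $2\times(d-1)$ matrix of linear forms, and I would identify these minors explicitly with the defining quadratics of $Q_{E,\mathcal{G}}$ for $\mathcal{G}$ of the form $h\otimes\mathcal{O}_{C}(F)$ with $F$ a general effective divisor of degree $g-1$. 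The additional quadrics in $\Ker(\rho_2)$ that cut $\Phi(C)\in|2H-(n-4)F|$ out of $S$ are produced from $Q_{E,\mathcal{G}}$'s with $\mathcal{G}$ of a different type; a dimension count using $\dim\Ker(\rho_2)=\binom{d+1}{2}-(3g+2n-3)$ against $\binom{d-2}{2}$ scroll minors then matches the remaining generators.

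The main obstacle in both cases is the explicit identification of known generators of the ideal (scroll minors in the hyperelliptic case, classical rank-$4$ Koszul quadrics in the elliptic case) with the specific $Q_{E,\mathcal{G}}$'s appearing in the statement. Once this matching is established, the restriction $E\in(C\setminus\Sigma)^{(n-4)}$ is automatic: spanning is a Zariski-open condition on the parameter space, so the dense open subset $(C\setminus\Sigma)^{(n-4)}$ already produces a spanning family. The splitting into elliptic and hyperelliptic cases is unavoidable because the two geometries yield genuinely different scroll/ideal structures.
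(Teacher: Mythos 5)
Your overall framework is sound: identifying $\Ker(\rho_{2})$ with the degree-two part of the ideal of $\Phi(C)$ via normal generation (valid since $\deg(\Omega_{C}^{1}\otimes\eta)=2g-2+n\geq 2g+2$), and reducing the restriction $E\in(C\setminus\Sigma)^{(n-4)}$ to a Zariski-openness argument over the irreducible parameter space of pairs $(E,\mathcal{G})$ is a legitimate simplification of the bookkeeping the paper does by hand. Your route is also genuinely different from the paper's, which argues by induction on $n$: the inductive step writes $\Ker(\rho_{2})$ as the sum of three subspaces $\iota_{i}(\Ker(\rho_{2,p_{i}}))$ coming from projections away from points $p_{i}$, using Saint-Donat's theorem that the projected curve is cut out by quadrics, and the base case $n=4$ is an explicit construction of a basis (which, as your dimension count $\binom{d-2}{2}+1$ suggests, is secretly the scroll quadrics plus one extra).

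The genuine gap is that the spanning assertion --- the entire content of the proposition --- is deferred to an ``explicit identification'' and a ``dimension count'' that are not carried out and are not off-the-shelf. The classical generation results you invoke (rank-four Koszul quadrics for elliptic normal curves, scroll minors for hyperelliptic curves) concern quadrics attached to decompositions of $\Omega_{C}^{1}\otimes\eta$ itself; but for $n\geq 5$ the quadrics $Q_{E,\mathcal{G}}$ carry the extra twist by $-E$ with $\deg E=n-4>0$, so they are the very special rank-$\leq 4$ quadrics that are singular along the linear span of $\Phi(E)$, and it is not classical that this constrained subfamily still spans $\Ker(\rho_{2})$. Concretely: in the hyperelliptic case your own count shows that $n-3$ quadrics beyond the scroll generators are needed, and matching a count of needed generators does not produce $n-3$ linearly independent quadrics of the form $Q_{E,\mathcal{G}}$ not lying in the span of the scroll quadrics --- this lower bound on the dimension of the span is exactly where the paper expends its effort (the independence criterion from Andreotti--Mayer applied to the $\gamma_{i}$ and the line $\ell_{V}$ argument in the $n=4$ lemma, then the projection argument to propagate to higher $n$). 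Likewise, the identification of the scroll minors with $Q_{E,h\otimes\mathcal{O}_{C}(F)}$ and of the remaining generators with $Q_{E,\mathcal{G}}$ ``of a different type'' is asserted, not proved. As written, the proposal is a plausible roadmap whose critical steps coincide with the open part of the problem rather than with known theorems.
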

First we prove it in the case where $n=4$.
\begin{lemma}\label{113436_12Mar25}
 If $n=4$ and $C$ is an elliptic curve or a hyperelliptic curve,
 then $\Ker{(\rho_{2})}$ is generated by
 the defining quadratics of $Q_{0,\mathcal{G}}$ for
 $\mathcal{G}\in{K_{2}(0,\eta)}$.
\end{lemma}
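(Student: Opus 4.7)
The plan is to first compute $\dim\ker(\rho_2)$, then give a geometric interpretation of each $Q_{0,\mathcal{G}}$ as a rank-$4$ quadric scroll, and finally span $\ker(\rho_2)$ by varying $\mathcal{G}$. By the surjectivity of $\rho_2$ and Riemann-Roch applied to the non-special line bundles $\Omega^1_C\otimes\eta$ (degree $2g+2$) and $(\Omega^1_C\otimes\eta)^{\otimes 2}$ (degree $4g+4$), one has $h^0(\Omega^1_C\otimes\eta)=g+3$ and $h^0((\Omega^1_C\otimes\eta)^{\otimes 2})=3g+5$, hence
$$
\dim\ker(\rho_2)=\binom{g+4}{2}-(3g+5)=\tfrac{1}{2}(g^2+g+2).
$$
Moreover, using the identity $Q_{0,\mathcal{G}}=\bigcup_{D\in|\mathcal{G}|}\mathbb{P}(H^0(\Omega^1_C\otimes\eta)/V_D)$ from the excerpt with $\dim V_D=2$ for generic $D$, each $Q_{0,\mathcal{G}}$ is a rank-$4$ quadric hypersurface in $\mathbb{P}^{g+2}$, swept out in two rulings by $g$-planes indexed by $|\mathcal{G}|$ and $|\Omega^1_C\otimes\eta\otimes\mathcal{G}^\vee|$.

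The elliptic case ($g=1$) is immediate: $\dim\ker(\rho_2)=2$, $\Phi(C)\subset\mathbb{P}^3$ is an elliptic normal quartic and its degree-$2$ ideal is a pencil of quadrics. Since $K_2(0,\eta)=\Pic^2(C)$ is one-dimensional, the map $\mathcal{G}\mapsto[Q_{0,\mathcal{G}}]$ is invariant under the involution $\mathcal{G}\leftrightarrow\eta\otimes\mathcal{G}^\vee$ (which swaps the two rulings) and injective on the quotient, hence sweeps out the whole pencil and thus spans $\ker(\rho_2)$.

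For the hyperelliptic case ($g\geq 2$), $K_2(0,\eta)$ is a Zariski open subset of $\Pic^{g+1}(C)$ of dimension $g$, and I would prove that $\mathcal{G}\mapsto[Q_{0,\mathcal{G}}]\in\mathbb{P}(\ker\rho_2)$ has linearly non-degenerate image by a Kodaira-Spencer computation: a tangent vector $\dot{\mathcal{G}}\in H^1(\mathcal{O}_C)$ deforms the bases $s_i,t_j$ in the determinantal formula for $Q_{0,\mathcal{G}}$ and produces a first-order variation $\dot{Q}_{0,\mathcal{G}}\in\ker(\rho_2)/\mathbb{C}\cdot Q_{0,\mathcal{G}}$; collecting such derivatives (and higher-order ones) at varying base points $\mathcal{G}$ should exhaust $\ker(\rho_2)$, with the calculation most naturally performed in coordinates adapted to the hyperelliptic involution $\sigma$ splitting $H^0(\Omega^1_C\otimes\eta)$ into $\sigma$-invariant and anti-invariant parts. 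The main obstacle lies here: since $\dim K_2(0,\eta)=g$ while $\dim\ker(\rho_2)\sim g^2/2$, the family $\{Q_{0,\mathcal{G}}\}$ must wind nonlinearly through $\mathbb{P}(\ker\rho_2)$, and verifying the full-span claim will require precise control of the determinants $\det(\mu(s_i,t_j))$ under deformation of $\mathcal{G}$, most likely carried out via the affine hyperelliptic equation $y^2=f(x)$.
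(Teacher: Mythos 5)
Your dimension count $\dim\Ker(\rho_{2})=\tfrac{g(g+1)}{2}+1$ agrees with the paper, and your observation that each $Q_{0,\mathcal{G}}$ is a rank-$4$ quadric swept out by the two rulings $|\mathcal{G}|$ and $|\Omega_{C}^{1}\otimes\eta\otimes\mathcal{G}^{\vee}|$ is correct. The elliptic case is essentially fine (non-constancy of $\mathcal{G}\mapsto[Q_{0,\mathcal{G}}]$ already suffices to span a pencil, so the injectivity claim is more than you need). But for $g\geq2$ your proposal has a genuine gap, and you acknowledge it yourself: you never produce $\tfrac{g(g+1)}{2}+1$ linearly independent quadrics. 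The ``Kodaira--Spencer'' strategy is circular as stated --- saying that the derivatives of $\mathcal{G}\mapsto Q_{0,\mathcal{G}}$ at varying base points ``should exhaust'' $\Ker(\rho_{2})$ is exactly the assertion that the image of this map is linearly non-degenerate in $\mathbb{P}(\Ker(\rho_{2}))$, which is the content of the lemma. No computation is offered that would certify independence, and the tension you point out (a $g$-dimensional family forced to span a space of dimension $\sim g^{2}/2$) is precisely where a concrete mechanism is required.

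The paper supplies that mechanism by exploiting the degree-$2$ pencil $\mathcal{H}$ (the hyperelliptic $g^{1}_{2}$, or any degree-$2$ sheaf with $h^{0}=2$ on an elliptic curve) rather than by differentiating the family. One fixes a general $(p_{1},\dots,p_{g+1})\in C^{g+1}$ and takes $\mathcal{G}_{ij}=\mathcal{H}\otimes\mathcal{O}_{C}(p_{1}+\dots+p_{g+1}-p_{i}-p_{j})$. Because one ruling of each $Q_{0,\mathcal{G}_{ij}}$ comes from $|\mathcal{H}|$, its defining quadratic can be written, up to a nonzero scalar, as $\det\begin{pmatrix}\gamma_{i}&\gamma_{j}\\ \lambda(\gamma_{i})&\lambda(\gamma_{j})\end{pmatrix}$, where $\gamma_{1},\dots,\gamma_{g+1}$ is a fixed basis of the hyperplane section $V=H^{0}(\Omega_{C}^{1}\otimes\eta\otimes\mathcal{O}_{C}(-E_{\infty}))$ and $\lambda$ is multiplication by the rational function cutting out $|\mathcal{H}|$. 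Linear independence of these $\tfrac{g(g+1)}{2}$ quadratics is then a citation to Andreotti--Mayer, not a deformation computation. This accounts for a codimension-$1$ subspace; the last quadric is $Q_{0,\mathcal{G}_{0}}$ with $\mathcal{G}_{0}=\mathcal{O}_{C}(p_{1}+\dots+p_{g+1})$, which is independent of the others because every $Q_{0,\mathcal{G}_{ij}}$ contains the line $\ell_{V}=\mathbb{P}(H^{0}(\Omega_{C}^{1}\otimes\eta)/V)$ while $Q_{0,\mathcal{G}_{0}}$ does not. If you want to salvage your approach, you would need to replace the deformation heuristic with some such explicit normal form for the quadrics in a common basis; as written, the hyperelliptic case --- the heart of the lemma --- is not proved.
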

\begin{proof}
 Since $C$ is an elliptic curve or a hyperelliptic curve, there exists
 an invertible sheaf $\mathcal{H}\in\Pic^{2}{(C)}$ such that
 $h^{0}(\mathcal{H})=2$.
 Let $U\subset{C^{g+1}}$ be the set of $(p_{1},\dots,p_{g+1})\in{C^{g+1}}$ satisfying the following conditions:
 \begin{enumerate}
  \item\label{155626_9Mar25}
       $p_{i}\neq{p_{j}}$ for $1\leq{i}<j\leq{g+1}$,
  \item\label{155636_9Mar25}
       $\mathcal{O}_{C}(p_{1}+\dots+p_{g+1})
	\in{K_{2}(0,\eta)}$,
  \item\label{155642_9Mar25}
       $\mathcal{H}\otimes
       \mathcal{O}_{C}(p_{1}+\dots+p_{g+1}-p_{i}-p_{j})
       \in{K_{2}(0,\eta)}$
       for $1\leq{i}<j\leq{g+1}$,
  \item\label{155647_9Mar25}
       $h^{0}(\mathcal{H}^{\vee}\otimes
       \mathcal{O}_{C}(p_{1}+\dots+p_{g+1}))=0$,
  \item\label{155653_9Mar25}
       $h^{0}(\Omega_{C}^{1}\otimes\eta\otimes\mathcal{H}^{\vee}\otimes
       \mathcal{O}_{C}(-p_{1}-\dots-p_{g+1}))=0$.
 \end{enumerate}
 For $\mathcal{G}\in\Pic^{g+1}{(C)}$, the condition $\mathcal{G}\in{K(0,\eta)}$ is equivalent to
 $$
 h^{0}(\Omega_{C}^{1}\otimes\mathcal{G}^{\vee})=h^{0}(\mathcal{G}\otimes\eta^{\vee})=0.
 $$
 Since
 $$
 \deg{(\Omega_{C}^{1}\otimes\mathcal{G}^{\vee})}=\deg{(\mathcal{G}\otimes\eta^{\vee})}=g-3<g,
 $$
 $K_{2}(0,\eta)=\Pic^{g+1}{(C)}$ for $g\leq2$,
 and $K_{2}(0,\eta)$ is a non-empty open subset of $\Pic^{g+1}{(C)}$
 with $\dim{(\Pic^{g+1}{(C)}\setminus{K_{2}(0,\eta)})}=g-3$ for $g\geq3$.
 Hence the set of $(p_{1},\dots,p_{g+1})\in{C^{g+1}}$ satisfying (\ref{155636_9Mar25}) is a non-empty open subset of $C^{g+1}$.
 Since the image of the map
 $$
 C^{g+1}\longrightarrow\Pic^{g+1}{(C)};\,
 (p_{1},\dots,p_{g+1})\longmapsto
 \mathcal{H}\otimes
 \mathcal{O}_{C}(p_{1}+\dots+p_{g+1}-p_{i}-p_{j})
 $$
 has dimension $g-1$ for $1\leq{i}<j\leq{g+1}$,
 the set of $(p_{1},\dots,p_{g+1})\in{C^{g+1}}$ satisfying (\ref{155642_9Mar25}) is a non-empty open subset of $C^{g+1}$.
 Since (\ref{155626_9Mar25}), (\ref{155647_9Mar25}) and (\ref{155653_9Mar25}) are also open conditions,
 $U$ is a non-empty open subset of $C^{g+1}$.
 We fix $(p_{1},\dots,p_{g+1})\in{U}$, and set
 $\mathcal{G}_{0}=\mathcal{O}_{C}(p_{1}+\dots+p_{g+1})$
 and
 $\mathcal{G}_{ij}=\mathcal{H}\otimes
 \mathcal{O}_{C}(p_{1}+\dots+p_{g+1}-p_{i}-p_{j})$
 for $1\leq{i}<j\leq{g+1}$.
 We show that the defining quadratics of $Q_{0,\mathcal{G}}$ and
 $Q_{0,\mathcal{G}_{ij}}$
 form a basis of $\Ker{(\rho_{2})}$.
 For $E_{0}\neq{E_{\infty}}\in|\mathcal{H}|$, there is a rational function $f$ on $C$ such that $\Div{(f)}=E_{0}-E_{\infty}$.
 Let $\alpha_{ij}\in{H^{0}(\mathcal{G}_{ij})}$ be a section defining the divisor
 $E_{\infty}+p_{1}+\dots+p_{g+1}-p_{i}-p_{j}\in|\mathcal{G}_{ij}|$.
 Then $\alpha'_{ij}=f\alpha_{ij}\in{H^{0}(\mathcal{G}_{ij})}$ defines the divisor
 $E_{0}+p_{1}+\dots+p_{g+1}-p_{i}-p_{j}\in|\mathcal{G}_{ij}|$, and
 by (\ref{155642_9Mar25}),
 $\alpha_{ij},\alpha'_{ij}$ form a basis of $H^{0}(\mathcal{G}_{ij})$.
 We set
 $$
 V=\Image{(H^{0}(\Omega_{C}^{1}\otimes\eta
 \otimes\mathcal{O}_{C}(-E_{\infty}))
 \hookrightarrow{H^{0}(\Omega_{C}^{1}\otimes\eta)})}
 $$
 and
 $$
 V_{k}=\Image{(
 H^{0}(\Omega_{C}^{1}\otimes\eta\otimes\mathcal{O}_{C}(-E_{\infty}-p_{k}))
 \hookrightarrow
 H^{0}(\Omega_{C}^{1}\otimes\eta)
 )}
 $$
 for $1\leq{k}\leq{g+1}$.
 Since $\dim{V}=g+1$,
 by (\ref{155626_9Mar25}) and (\ref{155653_9Mar25}),
 we have
 $\bigcap_{k=1}^{g+1}V_{k}=\{0\}$,
 and $\dim{\bigl(\bigcap_{k\neq{i}}V_{k}\bigr)}=1$
 for $1\leq{i}\leq{g+1}$.
 Thus, there is a unique divisor
 $D_{i}\in|\Omega_{C}^{1}\otimes\eta\otimes\mathcal{H}^{\vee}
 \otimes\mathcal{O}_{C}(-p_{1}-\dots-p_{g+1}+p_{i})|$.
 By (\ref{155626_9Mar25}) and (\ref{155653_9Mar25}), we have
 $$
 D_{i}+p_{j}\neq{D_{j}+p_{i}}
 \in|\Omega_{C}^{1}\otimes\eta\otimes\mathcal{G}_{ij}^{\vee}|.
 $$
 for $1\leq{i}<j\leq{g+1}$.
 Let
 $\beta_{ij}\in{H^{0}(\Omega_{C}^{1}\otimes\eta\otimes\mathcal{G}_{ij}^{\vee})}$
 define $D_{i}+p_{j}$, and
 $\beta'_{ij}
 \in{H^{0}(\Omega_{C}^{1}\otimes\eta\otimes\mathcal{G}_{ij}^{\vee})}$
 define $D_{j}+p_{i}$.
 By (\ref{155642_9Mar25}),
 $\beta_{ij},\beta'_{ij}$ form a basis of
 $H^{0}(\Omega_{C}^{1}\otimes\eta\otimes\mathcal{G}_{ij}^{\vee})$.
 Let $\gamma_{i}$ be a generator of $\bigcap_{k\neq{i}}V_{k}$
 for $1\leq{i}\leq{g+1}$.
 Then $\gamma_{1},\dots,\gamma_{g+1}$ form a basis of $V$.
 Since
 $\mu(\alpha_{ij},\beta_{ij})
 \in\bigcap_{k\neq{i}}V_{k}$
 and
 $\mu(\alpha_{ij},\beta'_{ij})
 \in\bigcap_{k\neq{j}}V_{k}$,
 we have
 $\mu(\alpha_{ij},\beta_{ij})=c_{ij}\gamma_{i}$
 and
 $\mu(\alpha_{ij},\beta'_{ij})=c'_{ij}\gamma_{j}$
 for some $c_{ij},c'_{ij}\in\mathbb{C}^{*}$.
 By using the rational function $f$, we define the injective homomorphism
 $$
 \lambda:\,V\longrightarrow{H^{0}(\Omega_{C}^{1}\otimes\eta)};\,
 \gamma\longmapsto{f\gamma}.
 $$
 Then, by \cite[p195, Corollary~2]{AM},
 the defining quadratics
 \begin{align*}
  &\det{
  \begin{pmatrix}
   \mu(\alpha_{ij},\beta_{ij})&\mu(\alpha_{ij},\beta'_{ij})\\
   \mu(\alpha'_{ij},\beta_{ij})&\mu(\alpha'_{ij},\beta'_{ij})\\
  \end{pmatrix}}
  =
  \det{
  \begin{pmatrix}
   \mu(\alpha_{ij},\beta_{ij})&\mu(\alpha_{ij},\beta'_{ij})\\
   \lambda(\mu(\alpha_{ij},\beta_{ij}))&\lambda(\mu(\alpha_{ij},\beta'_{ij}))\\
  \end{pmatrix}}\\
  =
  &\det{
  \begin{pmatrix}
   c_{ij}\gamma_{i}&c'_{ij}\gamma_{j}\\
   \lambda(c_{ij}\gamma_{i})&\lambda(c'_{ij}\gamma_{j})\\
  \end{pmatrix}}
  =c_{ij}c'_{ij}
  \det{
  \begin{pmatrix}
   \gamma_{i}&\gamma_{j}\\
   \lambda(\gamma_{i})&\lambda(\gamma_{j})\\
  \end{pmatrix}}
  \in\Sym^{2}{H^{0}(\Omega_{C}^{1}\otimes\eta)}
 \end{align*}
 of $Q_{0,\mathcal{G}_{ij}}$ for $1\leq{i}<{j}\leq{g+1}$
 are linearly independent over $\mathbb{C}$.
 Since
 $$
 \dim{\Ker{(\rho_{2})}}
 =\frac{(g+3)(g+4)}{2}-(3g+5)
 =\frac{g(g+1)}{2}+1,
 $$
 the defining quadratics of $Q_{0,\mathcal{G}_{ij}}$
 for $1\leq{i}<j\leq{g+1}$
 generate a subspace of codimension $1$ in
 $\Ker{(\rho_{2})}$.\par
 The quadric hypersurface
 $Q_{0,\mathcal{G}_{ij}}\subset\mathbb{P}{(H^{0}(\Omega_{C}^{1}\otimes\eta))}$
 contains the line
 $$
 \ell_{V}
 =\mathbb{P}{(H^{0}(\Omega_{C}^{1}\otimes\eta)/V)}
 \subset\mathbb{P}{(H^{0}(\Omega_{C}^{1}\otimes\eta))}
 $$
 for $1\leq{i}<j\leq{g+1}$.
 It is enough to show that $\ell_{V}$ is not contained in
 $$
 Q_{0,\mathcal{G}_{0}}
 =\bigcup_{D\in|\mathcal{G}_{0}|}
 \mathbb{P}(H^{0}(\Omega_{C}^{1}\otimes\eta)/V_{D})
 \subset\mathbb{P}{(H^{0}(\Omega_{C}^{1}\otimes\eta))}.
 $$
 If $D\in|\mathcal{G}_{0}|$ satisfies
 $\supp{(E_{\infty})}\cap\supp{(D)}=\emptyset$,
 then, by (\ref{155653_9Mar25}), we have
 $V_{D}\cap{V}=\{0\}$ and
 $\ell_{V}\cap\mathbb{P}(H^{0}(\Omega_{C}^{1}\otimes\eta)/V_{D})
 =\emptyset$.
 By
 (\ref{155636_9Mar25}) and (\ref{155647_9Mar25}),
 since the pencil $|\mathcal{G}_{0}|$ has no base points,
 there exist at most two $D\in|\mathcal{G}_{0}|$
 such that $\supp{(E_{\infty})}\cap\supp{(D)}\neq\emptyset$.
 When $p\in\supp{(E_{\infty})}\cap\supp{(D)}$ and $E_{\infty}=p+p'$,
 by (\ref{155647_9Mar25}), we have $p'\nleq{D}$.
 If $V_{D}\subset{V}$, then
 $$
 2=h^{0}(\Omega_{C}^{1}\otimes\eta\otimes\mathcal{O}_{C}(-D))
 =h^{0}(\Omega_{C}^{1}\otimes\eta\otimes\mathcal{O}_{C}(-D-p')),
 $$
 and
 $h^{0}(\Omega_{C}^{1}\otimes\eta\otimes\mathcal{O}_{C}(-D-p-p'))\geq1$.
 But this contradicts (\ref{155653_9Mar25}),
 hence $V_{D}\nsubseteq{V}$ and
 $\ell_{V}\cap\mathbb{P}(H^{0}(\Omega_{C}^{1}\otimes\eta)/V_{D})
 =\{\Phi(p)\}$.
 This implies that $\ell_{V}\nsubseteq{Q_{0,\mathcal{G}_{0}}}$.
\end{proof}
\begin{proof}[Proof of Proposition~\ref{180553_6Mar25}]
 We prove it by induction on $n$.
 When $n=4$, the result holds by Lemma~\ref{113436_12Mar25}.
 We assume $n\geq5$.
 Let $p_{1},p_{2},p_{3}\in{C\setminus\Sigma}$ be three distinct points.
 We denote by
 $$
 \iota_{i}:\,
 \Sym^{2}{H^{0}(\Omega_{C}^{1}\otimes\eta\otimes\mathcal{O}_{C}(-p_{i}))}
 \hookrightarrow
 \Sym^{2}{H^{0}(\Omega_{C}^{1}\otimes\eta)}
 $$
 the natural injective homomorphism.
 Since the multiplication maps
 $$
 \rho_{2,p_{i}}:\,
 \Sym^{2}{H^{0}(\Omega_{C}^{1}\otimes\eta\otimes\mathcal{O}_{C}(-p_{i}))}
 \longrightarrow
 H^{0}((\Omega_{C}^{1}\otimes\eta\otimes\mathcal{O}_{C}(-p_{i}))^{\otimes2})
 $$
 and
 $$
 \rho_{2,p_{i}+p_{j}}:\,
 \Sym^{2}{H^{0}(\Omega_{C}^{1}\otimes\eta\otimes\mathcal{O}_{C}(-p_{i}-p_{j}))}
 \longrightarrow
 H^{0}((\Omega_{C}^{1}\otimes\eta\otimes
 \mathcal{O}_{C}(-p_{i}-p_{j}))^{\otimes2})
 $$
 are surjective, we have
 \begin{align*}
  &\dim{(\iota_{1}(\Ker{(\rho_{2,p_{1}})})+\iota_{2}(\Ker{(\rho_{2,p_{2}})}))}\\
  =&\dim{\iota_{1}(\Ker{(\rho_{2,p_{1}})})}
  +\dim{\iota_{2}(\Ker{(\rho_{2,p_{2}})})}
  -\dim{(\iota_{1}(\Ker{(\rho_{2,p_{1}})})\cap
  \iota_{2}(\Ker{(\rho_{2,p_{2}})}))}\\
  =&\dim{\Ker{(\rho_{2,p_{1}})}}+\dim{\Ker{(\rho_{2,p_{2}})}}
  -\dim{\Ker{(\rho_{2,p_{1}+p_{2}})}}
  =\dim{\Ker{(\rho_{2})}}-1.
 \end{align*}
 Let
 $$
 \pi_{i}:\,
 \mathbb{P}(H^{0}(\Omega_{C}^{1}\otimes\eta))\setminus\{\Phi(p_{i})\}
 \longrightarrow
 \mathbb{P}(H^{0}(\Omega_{C}^{1}\otimes\eta\otimes\mathcal{O}_{C}(-p_{i})))
 $$
 denote the projection from $\Phi(p_{i})$,
 and let
 $$
 \Phi_{i}=\Phi_{|\Omega_{C}^{1}\otimes\eta\otimes\mathcal{O}_{C}(-p_{i})|}:\,
 C\longrightarrow
 \mathbb{P}(H^{0}(\Omega_{C}^{1}\otimes\eta\otimes\mathcal{O}_{C}(-p_{i})))
 $$
 be the closed immersion defined by the very ample linear system
 $|\Omega_{C}^{1}\otimes\eta\otimes\mathcal{O}_{C}(-p_{i})|$.
 Let $\ell\subset\mathbb{P}(H^{0}(\Omega_{C}^{1}\otimes\eta))$ be the line through the points $\Phi(p_{1})$ and $\Phi(p_{2})$.
 Since
 $$
 \pi_{1}(\ell)=\pi_{1}(\Phi(p_{2}))
 =\Phi_{1}(p_{2})\neq\Phi_{1}(p_{3})
 =\pi_{1}(\Phi(p_{3})),
 $$
 we have $\Phi(p_{3})\notin\ell$.
 Hence, $\pi_{3}(\ell)$ is a line in
 $\mathbb{P}(H^{0}(\Omega_{C}^{1}\otimes\eta\otimes\mathcal{O}_{C}(-p_{3})))$,
 and we have
 $$
 \pi_{3}(\ell)\nsubseteq
 \Phi_{3}(C)\subset
 \mathbb{P}(H^{0}(\Omega_{C}^{1}\otimes\eta\otimes\mathcal{O}_{C}(-p_{3}))).
 $$
 By \cite{SD}, we have
 $\Phi_{3}(C)=\bigcap_{g\in\Ker{(\rho_{2,p_{3}})}}Q_{g}$,
 where
 $Q_{g}\subset
 \mathbb{P}(H^{0}(\Omega_{C}^{1}\otimes\eta\otimes\mathcal{O}_{C}(-p_{3})))$
 denotes the quadric defined by $g$.
 Thus, there exists $g\in\Ker{(\rho_{2,p_{3}})}$ such that
 $\pi_{3}(\ell)\nsubseteq{Q_{g}}$.
 Since the quadric
 $\overline{\pi_{3}^{-1}(Q_{g})}\subset
 \mathbb{P}(H^{0}(\Omega_{C}^{1}\otimes\eta))$
 defined by $\iota_{3}(g)\in\Ker{(\rho_{2})}$
 does not contain the line $\ell$,
 we have
 $\iota_{3}(g)\notin
 \iota_{1}(\Ker{(\rho_{2,p_{1}})})+\iota_{2}(\Ker{(\rho_{2,p_{2}})})$.
 Hence,
 $$
 \Ker{(\rho_{2})}=
 \iota_{1}(\Ker{(\rho_{2,p_{1}})})+\iota_{2}(\Ker{(\rho_{2,p_{2}})})
 +\iota_{3}(\Ker{(\rho_{2,p_{3}})}).
 $$
 By the induction assumption,
 $\Ker{(\rho_{2,p_{i}})}\subset
 \Sym^{2}{H^{0}(\Omega_{C}^{1}\otimes\eta\otimes\mathcal{O}_{C}(-p_{i}))}$
 is generated by the defining quadratics of
 $Q_{E,\mathcal{F}}
 \subset\mathbb{P}(H^{0}(\Omega_{C}^{1}\otimes\eta
 \otimes\mathcal{O}_{C}(-p_{i})))$
 for $E\in{(C\setminus\Sigma)^{(n-5)}}$ and
 $\mathcal{F}\in{K_{2}(E,\eta\otimes\mathcal{O}_{C}(-p_{i}))}$.
 Hence,
 $\iota_{i}(\Ker{(\rho_{2,p_{i}})})$
 is generated by the defining quadratics of
 $Q_{E+p_{i},\mathcal{F}}=\overline{\pi_{i}^{-1}(Q_{E,\mathcal{F}})}
 \subset\mathbb{P}(H^{0}(\Omega_{C}^{1}\otimes\eta))$
 for $E+p_{i}\in{(C\setminus\Sigma)^{(n-4)}}$ and
 $\mathcal{F}\in{K_{2}(E,\eta\otimes\mathcal{O}_{C}(-p_{i}))
 =K_{2}(E+p_{i},\eta)}$.
\end{proof}
\begin{corollary}\label{155343_14Mar25}
 Let $(P,[\mathcal{L}])$ be the Prym variety of $[\tilde{C}\overset{\phi}{\rightarrow}{C}]\in\mathcal{R}_{g,2n}$.
 If $n\geq4$ and $C$ is an elliptic curve or a hyperelliptic curve,
 then $\Ker{(\rho_{2})}$ is generated by the defining quadratics
 of the tangent cones $\mathcal{T}_{\Theta,x}$ for
 $x\in{S(P,\mathcal{L})}$
 and $\Theta\in|\mathcal{L}|$
 satisfying $\mult_{x}{\Theta}=2$.
\end{corollary}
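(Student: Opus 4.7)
The plan is to combine Proposition~\ref{180553_6Mar25}, which gives generators of $\Ker(\rho_{2})$ of the form $Q_{E,\mathcal{G}}$, with Lemma~\ref{122252_5Mar25}, which realizes each such $Q_{E,\mathcal{G}}$ as the projectivized tangent cone of the divisor $W_{\xi}\in|\mathcal{L}_{\phi}|$ at a double point. Since the translation $t_{\mathcal{F}_{0}}$ identifies $(P,\mathcal{L})$ with $(P_{\phi},\mathcal{L}_{\phi})$ and hence their linear systems, singular loci, and tangent cones, it suffices to exhibit every generator $Q_{E,\mathcal{G}}$ of $\Ker(\rho_{2})$ as the tangent cone of some $W_{\xi}$ at a point of $S_{2}(P_{\phi},\mathcal{L}_{\phi})=B_{2}\cap P_{\phi}$ where the multiplicity is exactly $2$.

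Concretely, I would invoke Proposition~\ref{180553_6Mar25} with $\eta=\eta_{\phi}$ and $\Sigma=\Branch(\phi)$, so that $\Ker(\rho_{2})$ is generated by the $Q_{E,\mathcal{G}}$ with $E\in(C\setminus\Branch(\phi))^{(n-4)}$ and $\mathcal{G}\in K_{2}(E,\eta_{\phi})$. For each such pair $(E,\mathcal{G})$, I would first lift $E$ to a divisor $D\in\tilde{C}^{(n-4)}$ with $\phi(D)=E$ by selecting a single preimage of each point in the support of $E$; because $E$ avoids the branch locus, the resulting $D$ satisfies $\phi^{*}p\nleq D$ for every $p\in C$. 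Next, pick any square root $\mathcal{G}_{0}\in\Pic^{g+1}(C)$ of $\Omega_{C}^{1}\otimes\eta_{\phi}\otimes\mathcal{O}_{C}(-E)$ (which exists since $\Pic^{0}(C)$ is divisible) and set $\mathcal{F}=\mathcal{O}_{\tilde{C}}(D)\otimes\phi^{*}\mathcal{G}_{0}$ together with $\xi=\mathcal{G}\otimes\mathcal{G}_{0}^{\vee}\in\Pic^{0}(C)$.

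With this data the verification is immediate: $\Nm(\mathcal{F})=\mathcal{O}_{C}(E)\otimes\mathcal{G}_{0}^{\otimes 2}=\Omega_{C}^{1}\otimes\eta_{\phi}$, so $\mathcal{F}\in B_{2}\cap P_{\phi}$, and $\mathcal{G}_{0}\otimes\xi=\mathcal{G}\in K_{2}(\phi(D),\eta_{\phi})$ by choice. Both conditions of Lemma~\ref{122252_5Mar25}~(2) are thus satisfied, yielding $\mult_{\mathcal{F}}W_{\xi}=2$ and $\mathcal{T}_{W_{\xi},\mathcal{F}}=\mathcal{V}_{E,\mathcal{G}}$. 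Pulling back via $t_{\mathcal{F}_{0}}$ gives the required tangent cone of some $\Theta\in|\mathcal{L}|$ at a double point $x\in S(P,\mathcal{L})$, and since the $Q_{E,\mathcal{G}}$ already generate $\Ker(\rho_{2})$, so do these tangent cones.

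The only delicate point, and the one I regard as the actual content of the argument, is the choice of the finite set $\Sigma$ in Proposition~\ref{180553_6Mar25}: taking $\Sigma=\Branch(\phi)$ is essential because it guarantees that $E$ admits a lift $D$ containing no full fiber of $\phi$, which is precisely what forces $\mult_{\mathcal{F}}W_{\xi}=2$ rather than $\geq 3$ (the latter would produce a higher-degree tangent cone, not a quadric). Everything else is bookkeeping that reconciles the degree shifts between $K_{2}(E,\eta_{\phi})$, the square root $\mathcal{G}_{0}\in\Pic^{g+1}(C)$, and the $\Pic^{0}(C)$-parameter $\xi$ of the pencil of translates $W_{\xi}$.
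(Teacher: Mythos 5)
Your proposal is correct and follows essentially the same route as the paper's proof: apply Proposition~\ref{180553_6Mar25} with $\Sigma=\Branch{(\phi)}$, lift $E$ to a divisor $D$ containing no full fiber, choose a square root $\delta$ of $\Omega_{C}^{1}\otimes\eta_{\phi}\otimes\mathcal{O}_{C}(-E)$ (your $\mathcal{G}_{0}$), and realize $\mathcal{V}_{E,\mathcal{G}}$ as $\mathcal{T}_{W_{\xi},\mathcal{F}}$ via Lemma~\ref{122252_5Mar25}. Your added remark on why $\Sigma=\Branch{(\phi)}$ is the essential choice is a correct reading of the same point the paper uses implicitly.
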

\begin{proof}
 Since $S(P,\mathcal{L})\overset{\simeq}{\rightarrow}
 S_{2}(P_{\phi},\mathcal{L}_{\phi})$,
 by Proposition~\ref{180553_6Mar25}, it is enough to show that
 for $E\in{(C\setminus\Branch{(\phi)})^{(n-4)}}$ and $\mathcal{G}\in{K_{2}(E,\eta_{\phi})}$, the quadric
 $\mathcal{V}_{E,\mathcal{G}}\subset{H^{0}(\Omega_{C}^{1}\otimes\eta_{\phi})^{\vee}}$
 is the tangent cone $\mathcal{T}_{W_{\xi},\mathcal{F}}$ for some
 $\mathcal{F}\in{S_{2}(P_{\phi},\mathcal{L}_{\phi})}$
 and $\xi\in\Pic^{0}{(C)}$
 satisfying $\mult_{\mathcal{F}}{W_{\xi}}=2$.
 For $E\in{(C\setminus\Branch{(\phi)})^{(n-4)}}$, there exist $D\in\tilde{C}^{(n-4)}$ such that $\phi(D)=E$ and $\phi^{*}p\nleq{D}$ for any $p\in{C}$.
 Let $\delta\in\Pic^{g+1}{(C)}$ be an invertible sheaf such that
 $\delta^{\otimes2}=\Omega_{C}^{1}\otimes\eta_{\phi}\otimes\mathcal{O}_{C}(-E)$.
 For $\mathcal{G}\in{K_{2}(E,\eta_{\phi})}$, we set
 $\mathcal{F}=\mathcal{O}_{\tilde{C}}(D)\otimes\phi^{*}\delta$ and
 $\xi=\mathcal{G}\otimes\delta^{\vee}\in\Pic^{0}{(C)}$.
 Then, by Lemma~\ref{122252_5Mar25}, we have $\mult_{\mathcal{F}}{W_{\xi}}=2$,
 and
 $\mathcal{V}_{E,\mathcal{G}}$ is the tangent cone of $W_{\xi}$
 at $\mathcal{F}$.
\end{proof}
\section{Andreotti-Mayer loci}\label{165927_20Mar25}
Let
$$
\mathfrak{H}_{d}
=\{\tau\in\Mat_{d}{(\mathbb{C})}\mid\tau=\trans{\tau},\,
\im{(\tau)}>0\}
$$
be the Siegel upper half-space of degree $d=g+n-1$.
For the type of the polarization
$\Delta=(\underbrace{1,\dots,1}_{n-1},\underbrace{2,\dots,2}_{g})$,
we denote the diagonal matrix
$\diag{\Delta}\in\Mat_{d}{(\mathbb{Z})}$
by the same notation $\Delta$.
We define the action of $\mathbb{Z}^{d}\times\mathbb{Z}^{d}$ on
$\mathbb{C}^{d}\times\mathfrak{H}_{d}$
by
$$
(\mathbb{Z}^{d}\times\mathbb{Z}^{d})\times
(\mathbb{C}^{d}\times\mathfrak{H}_{d})
\longrightarrow\mathbb{C}^{d}\times\mathfrak{H}_{d};\,
(({\boldsymbol m}',{\boldsymbol m}''),({\boldsymbol z},\tau))
\longmapsto
({\boldsymbol z}+\tau{\boldsymbol m}'+\Delta{\boldsymbol m}'',\tau),
$$
and denote its quotient by $\mathcal{U}_{d}^{\Delta}$.
Then
$$
u:\,\mathcal{U}_{d}^{\Delta}\longrightarrow\mathfrak{H}_{d};\,
({\boldsymbol z},\tau)\longmapsto\tau
$$
is a proper morphism of complex manifolds, and the fiber $A_{\tau}=u^{-1}(\tau)$ at $\tau\in\mathfrak{H}_{d}$ is an abelian variety of dimension $d$.
Let $\mathcal{L}_{\tau}$ be the invertible sheaf on $A_{\tau}$ defined by the Hermitian form
$$
H:\,\mathbb{C}^{d}\times\mathbb{C}^{d}\longrightarrow\mathbb{C};\,
({\boldsymbol z},{\boldsymbol w})\longmapsto
\trans{{\boldsymbol z}}\,(\im{(\tau)})^{-1}\,\overline{\boldsymbol w}
$$
and the semi-character
$$
\chi_{0}:\,\tau\mathbb{Z}^{d}+\Delta\mathbb{Z}^{d}\longrightarrow
\mathbb{C}^{*};\,
\tau{\boldsymbol m}'+\Delta{\boldsymbol m}''\longmapsto
\exp{\bigl(\pi\sqrt{-1}\,\trans{\boldsymbol m}'\Delta{\boldsymbol m}''
\bigr)}.
$$
Then $(A_{\tau},[\mathcal{L}_{\tau}])$ is a polarized abelian variety of type $\Delta$.
The moduli space $\mathcal{A}_{d}^{\Delta}$ is the quotient of
$\mathfrak{H}_{d}$ by the action
$$
\Gamma_{\Delta}\times\mathfrak{H}_{d}\longrightarrow\mathfrak{H}_{d};\,
(
\begin{pmatrix}
 \alpha&\beta\\
 \gamma&\delta
\end{pmatrix},
\tau
)\longmapsto
(\alpha\tau+\beta)(\gamma\tau+\delta)^{-1},
$$
where $\Gamma_{\Delta}$ is the discrete subgroup of $\Sp_{2d}{(\mathbb{R})}$
defined by
$$
\Gamma_{\Delta}
=\left\{
\begin{pmatrix}
 \alpha&\beta\\
 \gamma&\delta
\end{pmatrix}
\in\Sp_{2d}{(\mathbb{R})}
\,\middle|\,
\begin{pmatrix}
 \alpha&\beta{\Delta^{-1}}\\
 \Delta\gamma&\Delta\delta{\Delta^{-1}}
\end{pmatrix}
\in\Mat_{2d}{(\mathbb{Z})}
\right\}.
$$
Following \cite{M3},
for ${\boldsymbol c}',{\boldsymbol c}''\in\frac{1}{2}\mathbb{Z}^{d}$
the theta function is defined by
$$
\theta
\begin{bmatrix}
 \trans{\boldsymbol c}'\\
 \trans{\boldsymbol c}''
\end{bmatrix}
({\boldsymbol z},\tau)
=\sum_{{\boldsymbol m}\in\mathbb{Z}^{d}}\exp{\Bigl(
\pi\sqrt{-1}\bigl(
\trans{({\boldsymbol m}+{\boldsymbol c}')}
\tau({\boldsymbol m}+{\boldsymbol c}')
+2\trans{({\boldsymbol m}+{\boldsymbol c}')}
({\boldsymbol z}+{\boldsymbol c}'')
\bigr)
\Bigr)}.
$$
Then, by \cite[Theorem~3.2.7. and Remark~8.5.3.]{BL},
$$
\theta_{\boldsymbol i}({\boldsymbol z},\tau)
=\theta
\begin{bmatrix}
 0&\cdots&0&i_{1}&\cdots&i_{g}\\
 0&\cdots&0&0&\cdots&0
\end{bmatrix}
({\boldsymbol z},\tau),\quad
\Bigl({\boldsymbol i}=
\begin{pmatrix}
 i_{1}\\
 \vdots\\
 i_{g}
\end{pmatrix}
\in{I=\Bigl\{0,\frac{1}{2}\Bigr\}^{g}}\Bigr)
$$
form a basis of $H^{0}(A_{\tau},\mathcal{L}_{\tau})$.
We consider the complex analytic space
$$
\mathcal{S}=\bigl\{({\boldsymbol z},\tau)\in\mathcal{U}_{d}^{\Delta}\mid
\theta_{\boldsymbol i}({\boldsymbol z},\tau)=0,\,
\frac{\partial\theta_{\boldsymbol i}}{\partial{z_{j}}}({\boldsymbol z},\tau)=0
\quad({\boldsymbol i}\in{I},\,1\leq{j}\leq{d})\bigr\},
$$
where $z_{j}$ denotes the coordinate of
${\boldsymbol z}=
\begin{pmatrix}
 z_{1}\\
 \vdots\\
 z_{d}
\end{pmatrix}
\in\mathbb{C}^{d}$.
Let
$\upsilon=u\vert_{\mathcal{S}}:\mathcal{S}\rightarrow\mathfrak{H}_{d}$
be the restriction of
$u:\mathcal{U}_{d}^{\Delta}\rightarrow\mathfrak{H}_{d}$.
Then the fiber of $\upsilon$ at $\tau\in\mathfrak{H}_{d}$ is
$$
\upsilon^{-1}(\tau)=S(A_{\tau},\mathcal{L}_{\tau})
=\bigcap_{\Theta\in|\mathcal{L}_{\tau}|}\Theta_{\mathrm{sing}}.
$$
\begin{lemma}\label{180214_14Mar25}
 If $({\boldsymbol a},\tau)\in\mathcal{S}$,
 then
 $\bigl({\boldsymbol a}+\tau
 \begin{pmatrix}
  {\boldsymbol 0}\\
  {\boldsymbol i}
 \end{pmatrix}
 +\Delta
 \begin{pmatrix}
  {\boldsymbol 0}\\
  {\boldsymbol j}
 \end{pmatrix},
 \tau\bigr)\in\mathcal{S}$
 for ${\boldsymbol i},{\boldsymbol j}\in{I}$.
\end{lemma}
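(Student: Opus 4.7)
The plan is to exploit the quasi-periodicity of the theta function under half-period translations in the lattice directions $\tau\begin{pmatrix}\boldsymbol 0\\\boldsymbol i\end{pmatrix}$ and $\Delta\begin{pmatrix}\boldsymbol 0\\\boldsymbol j\end{pmatrix}$, and show that every basis function $\theta_{\boldsymbol k}$ is carried to a nonzero exponential multiple of another basis function $\theta_{\boldsymbol k\oplus\boldsymbol i}$, where $\oplus$ denotes addition in $(\mathbb Z/2)^{g}$.

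First I would record the standard identity
\begin{equation*}
\theta\begin{bmatrix}\boldsymbol c'\\\boldsymbol c''\end{bmatrix}
(\boldsymbol z+\tau\boldsymbol m'+\boldsymbol m'',\tau)
=e(\boldsymbol z)\,\theta\begin{bmatrix}\boldsymbol c'+\boldsymbol m'\\\boldsymbol c''+\boldsymbol m''\end{bmatrix}(\boldsymbol z,\tau),
\end{equation*}
with $e(\boldsymbol z)=\exp(-\pi\sqrt{-1}\,\trans{\boldsymbol m'}\tau\boldsymbol m'-2\pi\sqrt{-1}\,\trans{\boldsymbol m'}(\boldsymbol z+\boldsymbol c''+\boldsymbol m''))$, obtained by completing the square in the theta series. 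Then I would specialize to $\boldsymbol c'=\begin{pmatrix}\boldsymbol 0\\\boldsymbol k\end{pmatrix}$, $\boldsymbol c''=\boldsymbol 0$, $\boldsymbol m'=\begin{pmatrix}\boldsymbol 0\\\boldsymbol i\end{pmatrix}$, and $\boldsymbol m''=\Delta\begin{pmatrix}\boldsymbol 0\\\boldsymbol j\end{pmatrix}=\begin{pmatrix}\boldsymbol 0\\2\boldsymbol j\end{pmatrix}$, which is an integer vector since $\boldsymbol j\in I$.

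The next step is to reduce the shifted characteristic to an element of $I$. Writing $\boldsymbol k+\boldsymbol i=(\boldsymbol k\oplus\boldsymbol i)+\boldsymbol p$ with $\boldsymbol p\in\mathbb Z^{g}$ encoding the carries, and using the two elementary identities
\begin{equation*}
\theta\begin{bmatrix}\boldsymbol c'+\boldsymbol p\\\boldsymbol c''\end{bmatrix}=\theta\begin{bmatrix}\boldsymbol c'\\\boldsymbol c''\end{bmatrix},\qquad
\theta\begin{bmatrix}\boldsymbol c'\\\boldsymbol c''+\boldsymbol n\end{bmatrix}=\exp(2\pi\sqrt{-1}\,\trans{\boldsymbol c'}\boldsymbol n)\,\theta\begin{bmatrix}\boldsymbol c'\\\boldsymbol c''\end{bmatrix}\qquad(\boldsymbol p,\boldsymbol n\in\mathbb Z^{d}),
\end{equation*}
I would absorb the integer parts of $\boldsymbol c'+\boldsymbol m'$ and the entire $\boldsymbol m''$ into a nonzero constant. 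This yields a relation of the form
\begin{equation*}
\theta_{\boldsymbol k}\Bigl(\boldsymbol z+\tau\begin{pmatrix}\boldsymbol 0\\\boldsymbol i\end{pmatrix}+\Delta\begin{pmatrix}\boldsymbol 0\\\boldsymbol j\end{pmatrix},\tau\Bigr)=E(\boldsymbol z)\,\theta_{\boldsymbol k\oplus\boldsymbol i}(\boldsymbol z,\tau),
\end{equation*}
where $E(\boldsymbol z)$ is nowhere zero (its logarithm is affine-linear in $\boldsymbol z$).

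Finally, I would conclude as follows. Since $\boldsymbol k\mapsto\boldsymbol k\oplus\boldsymbol i$ is a bijection on $I$, the assumption $\theta_{\boldsymbol l}(\boldsymbol a,\tau)=0$ for every $\boldsymbol l\in I$ gives $\theta_{\boldsymbol k\oplus\boldsymbol i}(\boldsymbol a,\tau)=0$ for every $\boldsymbol k\in I$, hence the displayed identity shows that $\theta_{\boldsymbol k}$ vanishes at the translated point for all $\boldsymbol k\in I$. Differentiating the identity in $z_{j}$ and evaluating at $\boldsymbol z=\boldsymbol a$, the product rule produces two terms, each containing a factor that vanishes by hypothesis (either $\theta_{\boldsymbol k\oplus\boldsymbol i}(\boldsymbol a,\tau)$ or $\partial\theta_{\boldsymbol k\oplus\boldsymbol i}/\partial z_{j}(\boldsymbol a,\tau)$), so every partial derivative $\partial\theta_{\boldsymbol k}/\partial z_{j}$ also vanishes at the translated point. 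This verifies the defining conditions of $\mathcal{S}$ at $\bigl(\boldsymbol a+\tau\begin{pmatrix}\boldsymbol 0\\\boldsymbol i\end{pmatrix}+\Delta\begin{pmatrix}\boldsymbol 0\\\boldsymbol j\end{pmatrix},\tau\bigr)$.

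The argument is essentially bookkeeping; the only point requiring a little care is the reduction of $\boldsymbol k+\boldsymbol i$ modulo integers and the resulting scalar factor from shifting $\boldsymbol c''$ by the integer vector $\boldsymbol m''$, which must be verified to be nonzero---this is immediate from its expression as an exponential of a real multiple of $\pi\sqrt{-1}$.
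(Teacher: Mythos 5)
Your proposal is correct and follows essentially the same route as the paper: both use the quasi-periodicity of $\theta\begin{bmatrix}\trans{\boldsymbol c}'\\ \trans{\boldsymbol c}''\end{bmatrix}$ under translation by $\tau\boldsymbol m'+\boldsymbol m''$ to write $\theta_{\boldsymbol k}$ at the shifted point as a nowhere-vanishing exponential times $\theta_{\boldsymbol k''}$ for the reduced characteristic $\boldsymbol k''\in I$ (your $\boldsymbol k\oplus\boldsymbol i$ is exactly the paper's $\boldsymbol i''$ with $\boldsymbol i+\boldsymbol i'-\boldsymbol i''\in\mathbb{Z}^{g}$), and then conclude via the product rule that the defining equations of $\mathcal{S}$ persist. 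The paper simply writes out the explicit exponential factor in one step rather than assembling it from the three elementary identities as you do.
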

\begin{proof}
 For ${\boldsymbol i},{\boldsymbol i}'\in{I}$, there exists
 ${\boldsymbol i}''\in{I}$ such that
 ${\boldsymbol i}+{\boldsymbol i}'-{\boldsymbol i}''\in\mathbb{Z}^{g}$.
 We set
 ${\boldsymbol c}=
 \begin{pmatrix}
  c_{1}\\
  \vdots\\
  c_{d}
 \end{pmatrix}
 =
 \begin{pmatrix}
  {\boldsymbol 0}\\
  {\boldsymbol i}
 \end{pmatrix}
 \in\frac{1}{2}\mathbb{Z}^{d}$
 and
 ${\boldsymbol d}=\Delta
 \begin{pmatrix}
  {\boldsymbol 0}\\
  {\boldsymbol j}
 \end{pmatrix}
 \in\mathbb{Z}^{d}$.
 Then we have
 $$
 \theta_{{\boldsymbol i}'}
 ({\boldsymbol z}+\tau{\boldsymbol c}+{\boldsymbol d},\tau)
 =\exp{\Bigl(
 \pi\sqrt{-1}(-\trans{\boldsymbol c}\tau{\boldsymbol c}
 -2\trans{\boldsymbol c}{\boldsymbol z}
 +4\trans{\boldsymbol i}'{\boldsymbol j})
 \Bigr)}
 \theta_{{\boldsymbol i}''}
 ({\boldsymbol z},\tau),
 $$
 and
 \begin{multline*}
  \frac{\partial{\theta_{{\boldsymbol i}'}}}{\partial{z_{j}}}
  ({\boldsymbol z}+\tau{\boldsymbol c}+{\boldsymbol d},\tau)\\
  =\exp{\Bigl(
  \pi\sqrt{-1}(-\trans{\boldsymbol c}\tau{\boldsymbol c}
  -2\trans{\boldsymbol c}{\boldsymbol z}
  +4\trans{\boldsymbol i}'{\boldsymbol j})
  \Bigr)}
  \bigl(-2\pi\sqrt{-1}\,c_{j}
  \theta_{{\boldsymbol i}''}
  ({\boldsymbol z},\tau)
  +
  \frac{\partial{\theta_{{\boldsymbol i}''}}}{\partial{z_{j}}}
  ({\boldsymbol z},\tau)\bigr)
 \end{multline*}
 for $1\leq{j}\leq{d}$.
 Since
 $\theta_{{\boldsymbol i}''}({\boldsymbol a},\tau)=0$
 and
 $\frac{\partial{\theta_{{\boldsymbol i}''}}}{\partial{z_{j}}}
 ({\boldsymbol a},\tau)=0$,
 we have
 $\theta_{{\boldsymbol i}'}
 ({\boldsymbol a}+\tau{\boldsymbol c}+{\boldsymbol d},\tau)=0$
 and
 $\frac{\partial{\theta_{{\boldsymbol i}'}}}{\partial{z_{j}}}
 ({\boldsymbol a}+\tau{\boldsymbol c}+{\boldsymbol d},\tau)=0$.
\end{proof}
For $m\geq0$, the degeneracy set (\cite[3.6.]{F})
$$
\mathcal{S}_{m}=\{({\boldsymbol z},\tau)\in\mathcal{S}\mid
\dim_{({\boldsymbol z},\tau)}{\upsilon^{-1}(\tau)}\geq{m}\}
$$
is an analytic subset in $\mathcal{S}$, and by the proper mapping theorem
(\cite[10.6.1.]{GR}),
$\mathcal{N}_{m}=\upsilon(\mathcal{S}_{m})$
is an analytic subset in $\mathfrak{H}_{d}$.
Let $\mathcal{N}$ be a local irreducible component of $\mathcal{N}_{m}$
at $\tau_{0}\in\mathcal{N}_{m}$.
We denote by $\Lambda(\mathcal{N})$ the set of all irreducible components
$\mathcal{M}$ of
$\upsilon^{-1}(\mathcal{N})\cap\mathcal{S}_{m}$ such that
$\upsilon(\mathcal{M})=\mathcal{N}$.
Since $\upsilon$ is proper, $\Lambda(\mathcal{N})$ is a non-empty finite set.
Following \cite{D}, we consider the subset
$\mathcal{S}(\tau_{0},\mathcal{N})
=\upsilon^{-1}(\tau_{0})\cap
\bigcup_{\mathcal{M}\in\Lambda(\mathcal{N})}\mathcal{M}$
in $\upsilon^{-1}(\tau_{0})$.
\begin{lemma}\label{123635_14Mar25}
 If $\mathcal{N}$ is nonsingular, then
 $$
 T_{\mathcal{N},\tau_{0}}\subset
 \Bigl(
 \sum_{1\leq{k}\leq{l}\leq{d}}
 \frac{\partial\theta_{\boldsymbol i}}{\partial\tau_{kl}}
 ({\boldsymbol a},\tau_{0})\,d\tau_{kl}
 \Bigr)^{\perp}
 \subset{T_{\mathfrak{H}_{d},\tau_{0}}}
 $$
 for ${\boldsymbol i}\in{I}$ and
 $({\boldsymbol a},\tau_{0})\in{\mathcal{S}(\tau_{0},\mathcal{N})}$, where
 $\tau_{kl}=\tau_{lk}$ denotes the coordinate of
 $\tau=
 \begin{pmatrix}
  \tau_{11}&&\tau_{1d}\\
  &\cdots& \\
  \tau_{d1}& &\tau_{dd}
 \end{pmatrix}
 \in\mathfrak{H}_{d}$.
\end{lemma}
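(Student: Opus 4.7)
The plan is to exploit the vanishing of $\theta_{\boldsymbol{i}}$ on $\mathcal{S}$ combined with generic smoothness of the proper surjective holomorphic map $\upsilon|_{\mathcal{M}}:\mathcal{M}\to\mathcal{N}$, where $\mathcal{M}\in\Lambda(\mathcal{N})$ is chosen so that $({\boldsymbol a},\tau_{0})\in\mathcal{M}$. Since $\theta_{\boldsymbol{i}}\equiv 0$ on $\mathcal{S}$ and hence on $\mathcal{M}$, its differential annihilates every tangent vector to $\mathcal{M}$; expanding
$$
d\theta_{\boldsymbol{i}}|_{({\boldsymbol a}',\tau')}=\sum_{j=1}^{d}\frac{\partial\theta_{\boldsymbol{i}}}{\partial z_{j}}({\boldsymbol a}',\tau')\,dz_{j}+\sum_{1\leq k\leq l\leq d}\frac{\partial\theta_{\boldsymbol{i}}}{\partial\tau_{kl}}({\boldsymbol a}',\tau')\,d\tau_{kl}
$$
and using $\partial\theta_{\boldsymbol{i}}/\partial z_{j}({\boldsymbol a}',\tau')=0$ at every $({\boldsymbol a}',\tau')\in\mathcal{S}$, the $dz_{j}$-terms drop out, so only the 1-form appearing in the statement survives (viewed as pulled back from $T_{\mathfrak{H}_{d},\tau'}^{\vee}$ via $d\upsilon$). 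The lemma thus reduces to showing that this 1-form pairs to zero with $T_{\mathcal{N},\tau_{0}}$.

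The obstruction to doing this in one step is that an arbitrary $v\in T_{\mathcal{N},\tau_{0}}$ need not lift to a tangent vector of $\mathcal{M}$ at $({\boldsymbol a},\tau_{0})$: even for an irreducible proper surjective holomorphic map to a smooth base, the differential can fail to surject at jump-fiber points (a blow-up illustrates this). My workaround is to first prove the vanishing on the dense open subset $U\subset\mathcal{M}$ consisting of $({\boldsymbol a}',\tau')$ at which $\mathcal{M}$ is smooth, $\mathcal{N}$ is smooth at $\tau'=\upsilon({\boldsymbol a}',\tau')$, and the differential $T_{\mathcal{M},({\boldsymbol a}',\tau')}\to T_{\mathcal{N},\tau'}$ is surjective. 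Since $\mathcal{N}$ is smooth on a neighborhood of $\tau_{0}$ and $\upsilon|_{\mathcal{M}}$ is a dominant map between irreducible analytic sets, generic smoothness in characteristic zero guarantees that $U$ is open and dense in $\mathcal{M}$. At any point of $U$, lifting $v'\in T_{\mathcal{N},\tau'}$ to some $\tilde{v}'\in T_{\mathcal{M},({\boldsymbol a}',\tau')}$ and plugging into $d\theta_{\boldsymbol{i}}|_{({\boldsymbol a}',\tau')}(\tilde{v}')=0$ immediately yields $\sum_{k\leq l}\frac{\partial\theta_{\boldsymbol{i}}}{\partial\tau_{kl}}({\boldsymbol a}',\tau')\,v'_{kl}=0$.

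To conclude at the specific point $({\boldsymbol a},\tau_{0})$, I would pick a sequence $({\boldsymbol a}_{n},\tau_{n})\in U$ with $({\boldsymbol a}_{n},\tau_{n})\to({\boldsymbol a},\tau_{0})$, and for each $v\in T_{\mathcal{N},\tau_{0}}$ produce $v_{n}\in T_{\mathcal{N},\tau_{n}}$ with $v_{n}\to v$; this is possible because $\mathcal{N}$ is locally a complex submanifold near $\tau_{0}$, so its tangent bundle trivializes locally. The previous step applied to each $n$, together with the holomorphic dependence of $\partial\theta_{\boldsymbol{i}}/\partial\tau_{kl}$ on $({\boldsymbol z},\tau)$, then lets me pass to the limit and deduce the required vanishing at $({\boldsymbol a},\tau_{0})$. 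The main obstacle I anticipate is precisely this density step: ensuring that the lifting locus $U$ meets every neighborhood of $({\boldsymbol a},\tau_{0})$ requires a careful application of generic smoothness for proper dominant holomorphic maps onto a (locally) smooth base in the analytic category.
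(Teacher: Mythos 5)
Your proposal is correct and follows essentially the same route as the paper: both arguments pass to the open dense locus $U\subset\mathcal{M}$ of smooth points where $d\upsilon$ surjects onto $T_{\mathcal{N},\tau}$, use the identical vanishing of $\theta_{\boldsymbol i}$ and its $z_{j}$-derivatives on $\mathcal{S}$ to reduce $d\theta_{\boldsymbol i}$ to its $\tau_{kl}$-part, and then propagate the resulting orthogonality from $U$ to the given point $({\boldsymbol a},\tau_{0})$. The only (cosmetic) difference is in the last step: you pass to the limit along a sequence in $U$ using continuity of the tangent spaces of the nonsingular $\mathcal{N}$, whereas the paper observes that the locus where the conclusion holds is an analytic subset of the irreducible $\mathcal{M}$ containing the non-empty open set $U$, hence all of $\mathcal{M}$.
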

\begin{proof}
 Since
 $({\boldsymbol a},\tau_{0})\in{\mathcal{S}(\tau_{0},\mathcal{N})}$,
 there exists
 $\mathcal{M}\in\Lambda(\mathcal{N})$ such that
 $({\boldsymbol a},\tau_{0})\in\mathcal{M}$.
 Then,
 $$
 U=\{({\boldsymbol z},\tau)\in\mathcal{M}\setminus
 \mathcal{M}_{\mathrm{sing}}\mid
 \text{$T_{\mathcal{M},({\boldsymbol z},\tau)}
 \overset{d\upsilon}{\rightarrow}T_{\mathcal{N},\tau}$
 is surjective}\}
 $$
 is a non-empty open subset of $\mathcal{M}$.
 For $({\boldsymbol z},\tau)\in{U}$, we can take a local coordinate ${\boldsymbol t}=(t_{1},\dots,t_{\dim{\mathcal{N}}})$ of $\mathcal{N}$ at $\tau$ and
 a local coordinate
 $({\boldsymbol s},{\boldsymbol t})$ of $U$ at $({\boldsymbol z},\tau)$
 such that
 $\upsilon\vert_{U}:U\rightarrow\mathcal{N}$ is given by
 $({\boldsymbol s},{\boldsymbol t})\mapsto{\boldsymbol t}$.
 Let
 $({\boldsymbol z}({\boldsymbol s},{\boldsymbol t}),
 \tau({\boldsymbol t}))\in{U}\subset\mathcal{S}$
 be the point corresponding to $({\boldsymbol s},{\boldsymbol t})$.
 Then we have
 $$
 \theta_{\boldsymbol i}({\boldsymbol z}
 ({\boldsymbol s},{\boldsymbol t}),\tau({\boldsymbol t}))=0,\quad
 \frac{\partial\theta_{\boldsymbol i}}{\partial{z_{j}}}
 ({\boldsymbol z}({\boldsymbol s},{\boldsymbol t}),\tau({\boldsymbol t}))=0
 \quad({\boldsymbol i}\in{I},\,1\leq{j}\leq{d}).
 $$
 For $1\leq{\nu}\leq\dim{\mathcal{N}}$, by the chain rule, we get
 \begin{multline*}
  \frac{\partial}{\partial{t_{\nu}}}\Bigl(
  \theta_{\boldsymbol i}({\boldsymbol z}
  ({\boldsymbol s},{\boldsymbol t}),\tau({\boldsymbol t}))
  \Bigr)\\
  =
  \sum_{j=1}^{d}
  \frac{\partial\theta_{\boldsymbol i}}{\partial{z_{j}}}
  ({\boldsymbol z}({\boldsymbol s},{\boldsymbol t}),\tau({\boldsymbol t}))
  \frac{\partial{z_{j}}}{\partial{t_{\nu}}}({\boldsymbol s},{\boldsymbol t})
  +\sum_{1\leq{k}\leq{l}\leq{d}}
  \frac{\partial\theta_{\boldsymbol i}}{\partial{\tau_{kl}}}
  ({\boldsymbol z}({\boldsymbol s},{\boldsymbol t}),\tau({\boldsymbol t}))
  \frac{\partial{\tau_{kl}}}{\partial{t_{\nu}}}({\boldsymbol t}),
 \end{multline*}
 which implies
 $$
 \sum_{1\leq{k}\leq{l}\leq{d}}
 \frac{\partial\theta_{\boldsymbol i}}{\partial{\tau_{kl}}}
 ({\boldsymbol z}({\boldsymbol s},{\boldsymbol t}),\tau({\boldsymbol t}))
 \frac{\partial{\tau_{kl}}}{\partial{t_{\nu}}}({\boldsymbol t})
 =0.
 $$
 This shows that
 $$
 \frac{\partial}{\partial{t_{\nu}}}
 =\sum_{1\leq{k}\leq{l}\leq{d}}
 \frac{\partial{\tau_{kl}}}{\partial{t_{\nu}}}({\boldsymbol t})
 \frac{\partial}{\partial{\tau_{kl}}}
 \in
 \Bigl(
 \sum_{1\leq{k}\leq{l}\leq{d}}
 \frac{\partial\theta_{\boldsymbol i}}{\partial{\tau_{kl}}}
 ({\boldsymbol z}({\boldsymbol s},{\boldsymbol t}),\tau({\boldsymbol t}))
 \,d\tau_{kl}
 \Bigr)^{\perp}
 \subset{T_{\mathfrak{H}_{d},\tau({\boldsymbol t})}}.
 $$
 Since the open subset $U\subset\mathcal{M}$ is contained in the analytic subset
 $$
 \mathcal{M}'
 =\left\{({\boldsymbol z},\tau)\in\mathcal{M}\,\middle|\,
 T_{\mathcal{N},\tau}\subset
 \Bigl(
 \sum_{1\leq{k}\leq{l}\leq{d}}
 \frac{\partial\theta_{\boldsymbol i}}{\partial\tau_{kl}}
 ({\boldsymbol z},\tau)\,d\tau_{kl}
 \Bigr)^{\perp}\right\},
 $$
 we have
 $({\boldsymbol a},\tau_{0})\in\mathcal{M}=\mathcal{M}'$.
\end{proof}
\begin{proposition}\label{184527_14Mar25}
 Assume that $\dim{\upsilon^{-1}(\tau_{0})}=m$.
 If there exist $c_{{\boldsymbol i},j}\in\mathbb{C}$ and
 $({\boldsymbol a}_{j},\tau_{0})\in\mathcal{S}(\tau_{0},\mathcal{N})$
 for ${\boldsymbol i}\in{I}$ and $1\leq{j}\leq{r}$
 such that cotangent vectors
 $$
 \sum_{1\leq{k}\leq{l}\leq{d}}
 \sum_{{\boldsymbol i}\in{I}}c_{{\boldsymbol i},j}
 \frac{\partial\theta_{\boldsymbol i}}{\partial\tau_{kl}}
 ({\boldsymbol a}_{j},\tau_{0})\,d\tau_{kl}
 \in{T_{\mathfrak{H}_{d},\tau_{0}}^{\vee}}=
 \bigoplus_{1\leq{k}\leq{l}\leq{d}}\mathbb{C}d\tau_{kl}
 \quad
 (1\leq{j}\leq{r})
 $$
 are linearly independent, then
 $\dim{\mathcal{N}}\leq\dim{\mathfrak{H}_{d}}-r$.
\end{proposition}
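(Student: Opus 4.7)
The proposition is essentially a linear-algebra consequence of Lemma~\ref{123635_14Mar25}, once we handle the possibility that $\tau_0$ is a singular point of $\mathcal{N}$.

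When $\tau_0$ is a smooth point of $\mathcal{N}$, the result is immediate. Lemma~\ref{123635_14Mar25} asserts that, for each $\boldsymbol{i}\in I$ and each $(\boldsymbol{a},\tau_0)\in\mathcal{S}(\tau_0,\mathcal{N})$, the covector $\sum_{k\leq l}\partial_{\tau_{kl}}\theta_{\boldsymbol{i}}(\boldsymbol{a},\tau_0)\,d\tau_{kl}$ vanishes on $T_{\mathcal{N},\tau_0}$. Taking $c_{\boldsymbol{i},j}$-linear combinations at the given points $(\boldsymbol{a}_j,\tau_0)$, each $\omega_j:=\sum_{\boldsymbol{i}}c_{\boldsymbol{i},j}\sum_{k\leq l}\partial_{\tau_{kl}}\theta_{\boldsymbol{i}}(\boldsymbol{a}_j,\tau_0)\,d\tau_{kl}$ also vanishes on $T_{\mathcal{N},\tau_0}$. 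Having $r$ independent covectors in the annihilator of $T_{\mathcal{N},\tau_0}$ in $T^{\vee}_{\mathfrak{H}_d,\tau_0}$ forces $\dim T_{\mathcal{N},\tau_0}\leq\dim\mathfrak{H}_d-r$, and smoothness gives $\dim\mathcal{N}=\dim T_{\mathcal{N},\tau_0}$.

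To treat a possibly singular $\tau_0$, I would transport the independence to a nearby smooth point. For each $j$ pick $\mathcal{M}_j\in\Lambda(\mathcal{N})$ with $(\boldsymbol{a}_j,\tau_0)\in\mathcal{M}_j$. From the hypothesis $\dim\upsilon^{-1}(\tau_0)=m$ and the surjection $\upsilon|_{\mathcal{M}_j}:\mathcal{M}_j\to\mathcal{N}$, upper semicontinuity of fibre dimension forces the fibres of $\upsilon|_{\mathcal{M}_j}$ to be equidimensional of dimension $m$ on a neighbourhood $V$ of $\upsilon|_{\mathcal{M}_j}^{-1}(\tau_0)$, so $\upsilon|_{\mathcal{M}_j}$ is open at $(\boldsymbol{a}_j,\tau_0)$. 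Since $\mathcal{N}_{\mathrm{sm}}$ is dense in $\mathcal{N}$, one may choose $\tau_1\in\upsilon(V)\cap\mathcal{N}_{\mathrm{sm}}$ arbitrarily close to $\tau_0$ together with $(\boldsymbol{a}'_j,\tau_1)\in V$ close to $(\boldsymbol{a}_j,\tau_0)$. Continuity of the partials $\partial_{\tau_{kl}}\theta_{\boldsymbol{i}}$ makes the replacement covectors $\omega'_j:=\sum_{\boldsymbol{i}}c_{\boldsymbol{i},j}\sum_{k\leq l}\partial_{\tau_{kl}}\theta_{\boldsymbol{i}}(\boldsymbol{a}'_j,\tau_1)\,d\tau_{kl}$ close to $\omega_j$ in the fixed space $\bigoplus_{k\leq l}\mathbb{C}\,d\tau_{kl}$, and since linear independence is an open condition the $\omega'_j$ remain independent for $\tau_1$ sufficiently close to $\tau_0$. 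Lemma~\ref{123635_14Mar25} applied at the smooth point $\tau_1$ then yields $\dim\mathcal{N}=\dim T_{\mathcal{N},\tau_1}\leq\dim\mathfrak{H}_d-r$.

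The main obstacle I anticipate is justifying the openness of $\upsilon|_{\mathcal{M}_j}$ at $(\boldsymbol{a}_j,\tau_0)$ cleanly in the analytic category: this is where the hypothesis $\dim\upsilon^{-1}(\tau_0)=m$ is really used, combined with the equidimensional open mapping theorem for proper holomorphic maps. If openness proves delicate, a safer alternative is to replace the nearby-smooth-point argument by a desingularization $\pi:\tilde{\mathcal{N}}\to\mathcal{N}$: pull back $\mathcal{M}_j$ along $\pi$ to a component $\tilde{\mathcal{M}}_j$, lift $(\boldsymbol{a}_j,\tau_0)$ to $\tilde{\mathcal{M}}_j$, and apply Lemma~\ref{123635_14Mar25} over the smooth base $\tilde{\mathcal{N}}$ directly, using $\dim\tilde{\mathcal{N}}=\dim\mathcal{N}$ and that $\pi^{*}d\tau_{kl}$ at a generic preimage preserves the linear independence of the $\omega_j$.
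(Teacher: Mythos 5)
Your proposal is correct and follows essentially the same route as the paper: reduce to the smooth case via Lemma~\ref{123635_14Mar25}, and handle a singular $\tau_{0}$ by using the hypothesis $\dim{\upsilon^{-1}(\tau_{0})}=m$ together with upper semi-continuity of fiber dimension to make $\upsilon\vert_{\mathcal{M}_{j}}$ open near $({\boldsymbol a}_{j},\tau_{0})$ (the paper cites \cite[3.10.]{F} for exactly the equidimensional open-mapping statement you were worried about), then move to a nearby smooth point where the covectors stay independent by continuity. The only detail worth adding is the paper's explicit remark that at the nearby smooth point $\tau_{0}'$ the set $\mathcal{N}\setminus\mathcal{N}_{\mathrm{sing}}$ is again a local irreducible component of $\mathcal{N}_{m}$ and the displaced points lie in $\mathcal{S}(\tau_{0}',\mathcal{N}')$, which is what licenses applying Lemma~\ref{123635_14Mar25} there.
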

\begin{proof}
 For $1\leq{j}\leq{r}$, let
 $\mathcal{M}_{j}\in\Lambda(\mathcal{N})$ satisfy
 $({\boldsymbol a}_{j},\tau_{0})\in\mathcal{M}_{j}$.
 By the upper semi-continuity of the fiber dimension \cite[3.4.]{F}, there exist open neighborhoods
 $U_{j}\subset\mathcal{M}_{j}$ at
 $({\boldsymbol a}_{j},\tau_{0})$
 such that
 $$
 m=\dim{\upsilon^{-1}(\tau_{0})}\geq
 \dim_{({\boldsymbol a}_{j},\tau_{0})}
 {(\upsilon^{-1}(\tau_{0})\cap\mathcal{M})}
 \geq
 \dim_{({\boldsymbol b},\tau)}
 {(\upsilon^{-1}(\tau)\cap\mathcal{M})}
 \geq{m}
 $$
 for any $({\boldsymbol b},\tau)\in{U_{j}}$.
 Then, by \cite[3.10.]{F}, the restriction
 $\upsilon\vert_{U_{j}}:U_{j}\rightarrow\mathcal{N}$
 is an open mapping.
 By replacing $U_{j}$ with smaller open neighborhoods, we may assume that
 $$
 \sum_{1\leq{k}\leq{l}\leq{d}}
 \sum_{{\boldsymbol i}\in{I}}c_{{\boldsymbol i},j}
 \frac{\partial\theta_{\boldsymbol i}}{\partial\tau_{kl}}
 ({\boldsymbol b}_{j},\tau_{j})\,d\tau_{kl}
 \quad
 (1\leq{j}\leq{r})
 $$
 are linearly independent for any
 $\{({\boldsymbol b}_{j},\tau_{j})\}_{1\leq{j}\leq{r}}
 \in\prod_{j=1}^{r}{U_{j}}$.
 Since
 $\bigcap_{j=1}^{r}\upsilon(U_{j})$ is an open neighborhood of
 $\mathcal{N}$ at $\tau_{0}$,
 there exists a point $\tau'_{0}\in\bigcap_{j=1}^{r}\upsilon(U_{j})$
 such that $\mathcal{N}$ is nonsingular at $\tau'_{0}$.
 Then,
 $\mathcal{N}'=\mathcal{N}\setminus\mathcal{N}_{\mathrm{sing}}$ is a local irreducible component of $\mathcal{N}_{m}$ at $\tau_{0}'$.
 For $({\boldsymbol a}'_{j},\tau'_{0})\in\upsilon^{-1}(\tau'_{0})\cap{U_{j}}$,
 we have
 $({\boldsymbol a}'_{j},\tau'_{0})\in\mathcal{S}(\tau'_{0},\mathcal{N}')$
 and
 $$
 \sum_{1\leq{k}\leq{l}\leq{d}}
 \sum_{{\boldsymbol i}\in{I}}c_{{\boldsymbol i},j}
 \frac{\partial\theta_{\boldsymbol i}}{\partial\tau_{kl}}
 ({\boldsymbol a}'_{j},\tau'_{0})\,d\tau_{kl}
 \quad
 (1\leq{j}\leq{r})
 $$
 are linearly independent.
 By Lemma~\ref{123635_14Mar25},
 we conclude that $\dim{\mathcal{N}}\leq\dim{\mathfrak{H}_{d}}-r$.
\end{proof}
Let
$\pi:\mathfrak{H}_{d}\rightarrow\mathcal{A}_{d}^{\Delta}
=\Gamma_{\Delta}\backslash\mathfrak{H}_{d}$
be the quotient map.
By Corollary~\ref{154114_14Mar25},
we have
$\pi^{-1}(\Prym_{g,2n}{(\mathcal{R}_{g,2n})})\subset\mathcal{N}_{n-4}$
for $n\geq4$, and $\dim{\upsilon^{-1}(\tau_{0})}=n-4$ for any
$\tau_{0}\in\pi^{-1}(\Prym_{g,2n}{(\mathcal{R}_{g,2n})})$.
\begin{lemma}\label{184253_14Mar25}
 If $n\geq4$ and
 $\tau_{0}\in\pi^{-1}(\Prym_{g,2n}{(\mathcal{R}_{g,2n})})$, then
 $\mathcal{S}(\tau_{0},\mathcal{N})=\upsilon^{-1}(\tau_{0})$
 for any local irreducible component $\mathcal{N}$ of $\mathcal{N}_{n-4}$
 at $\tau_{0}$.
\end{lemma}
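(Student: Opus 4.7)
Let $F = \upsilon^{-1}(\tau_0) = S(A_{\tau_0}, \mathcal{L}_{\tau_0})$. By Corollary~\ref{154114_14Mar25}, $F$ is irreducible of dimension $n-4$ when $n\geq 5$, and consists of $4^g$ reduced points when $n=4$; in both cases every point of $F$ has local fiber dimension $n-4$, so $F\subset\mathcal{S}_{n-4}$. Since $\Lambda(\mathcal{N})$ is nonempty, fix $\mathcal{M}_0\in\Lambda(\mathcal{N})$; because $\upsilon(\mathcal{M}_0)=\mathcal{N}\ni\tau_0$, the intersection $\mathcal{M}_0\cap F$ is nonempty. The task is to show $F\subset\bigcup_{\mathcal{M}\in\Lambda(\mathcal{N})}\mathcal{M}$, which I will handle in two cases.

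For $n\geq 5$ I would prove the stronger statement $F\subset\mathcal{M}_0$. Let $\mathcal{M}_0,\mathcal{M}_1,\dots$ be the finitely many local irreducible components of $\upsilon^{-1}(\mathcal{N})\cap\mathcal{S}_{n-4}$ near $\tau_0$, and set $U=\mathcal{M}_0\setminus\bigcup_{i\geq 1}\mathcal{M}_i$, a dense open subset of $\mathcal{M}_0$. For $p\in U$, let $Z$ be the local irreducible component of the fiber $\upsilon^{-1}(\upsilon(p))$ through $p$; then $\dim Z\geq n-4$ because $p\in\mathcal{S}_{n-4}$, every point of $Z$ has ambient fiber dimension $\geq\dim Z\geq n-4$, and so $Z\subset\upsilon^{-1}(\mathcal{N})\cap\mathcal{S}_{n-4}$. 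Since $Z$ is irreducible and $p$ sits on the unique local component $\mathcal{M}_0$, we conclude $Z\subset\mathcal{M}_0$, whence $\dim_p(\mathcal{M}_0\cap\upsilon^{-1}(\upsilon(p)))\geq n-4$. By the upper semi-continuity of fiber dimension for the proper morphism $\upsilon|_{\mathcal{M}_0}$ \cite[3.4.]{F}, this inequality holds at every $p\in\mathcal{M}_0$; restricting to $p\in\mathcal{M}_0\cap F$ and using $\mathcal{M}_0\cap F\subset F$ with $\dim F=n-4$ gives $\dim_p(\mathcal{M}_0\cap F)=n-4$ everywhere on $\mathcal{M}_0\cap F$. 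The irreducibility of $F$ then forces $\mathcal{M}_0\cap F=F$.

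For $n=4$ the above argument is vacuous, so I would rely on Lemma~\ref{180214_14Mar25} instead. The $I\times I$-action on $\mathcal{S}$ preserves fibers of $\upsilon$, and hence also $\mathcal{S}_{n-4}$, so it permutes the irreducible components of $\upsilon^{-1}(\mathcal{N})\cap\mathcal{S}_{n-4}$ and the set $\Lambda(\mathcal{N})$. The shift vectors $\tau_0\begin{pmatrix}{\boldsymbol 0}\\ {\boldsymbol i}\end{pmatrix}+\Delta\begin{pmatrix}{\boldsymbol 0}\\ {\boldsymbol j}\end{pmatrix}$ from Lemma~\ref{180214_14Mar25} lie in $\tau_0\mathbb{Z}^d+\Delta\mathbb{Z}^d$ only when ${\boldsymbol i}={\boldsymbol j}={\boldsymbol 0}$, so the action on $A_{\tau_0}$ is free; consequently the $I\times I$-orbit of any point of $F$ has cardinality $|I\times I|=4^g=\#F$ by Corollary~\ref{154114_14Mar25}, and therefore equals $F$. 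For any $p\in F$, write $p=\gamma\cdot q$ with $q\in\mathcal{M}_0\cap F$ and $\gamma\in I\times I$; then $\gamma\cdot\mathcal{M}_0\in\Lambda(\mathcal{N})$ contains $p$, so $p\in\mathcal{S}(\tau_0,\mathcal{N})$.

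The main difficulty is the key step in the $n\geq 5$ case: the hypothesis $\mathcal{M}_0\subset\mathcal{S}_{n-4}$ only controls the dimension of the ambient fiber in $\mathcal{S}$, not that of the fiber of $\upsilon|_{\mathcal{M}_0}$. The pivotal observation is that at a generic point of $\mathcal{M}_0$ the local ambient-fiber component has no other component of $\upsilon^{-1}(\mathcal{N})\cap\mathcal{S}_{n-4}$ to escape into, so it is trapped inside $\mathcal{M}_0$; only after promoting this bound to every point of $\mathcal{M}_0$ via semi-continuity can the irreducibility of $F$ be brought to bear. By contrast, the $n=4$ case is essentially a counting argument once the freeness of the $I\times I$-action is verified from the explicit lattice in Lemma~\ref{180214_14Mar25}.
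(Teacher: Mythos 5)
Your proof is correct and follows the same two-case strategy as the paper: for $n\geq5$ the irreducibility and pure dimension $n-4$ of $\upsilon^{-1}(\tau_{0})$ from Corollary~\ref{154114_14Mar25} force the whole fiber into each $\mathcal{M}\in\Lambda(\mathcal{N})$, and for $n=4$ the half-period translations of Lemma~\ref{180214_14Mar25} act freely, hence transitively, on the $4^{g}$ points of the fiber. If anything you are more careful than the paper in the $n\geq5$ case, where the paper simply asserts $\dim_{({\boldsymbol z},\tau_{0})}(\upsilon^{-1}(\tau_{0})\cap\mathcal{M})\geq n-4$, while you justify this by trapping the local fiber component inside $\mathcal{M}$ at a generic point and then invoking upper semicontinuity of the fiber dimension of $\upsilon\vert_{\mathcal{M}}$.
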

\begin{proof}
 When $n\geq5$, by Corollary~\ref{154114_14Mar25},
 $\upsilon^{-1}(\tau_{0})$ is irreducible of dimension $n-4$.
 Since
 $$
 n-4=
 \dim{\upsilon^{-1}(\tau_{0})}
 \geq
 \dim_{({\boldsymbol z},\tau_{0})}{(\upsilon^{-1}(\tau_{0})\cap\mathcal{M})}
 \geq{n-4}
 $$
 for any irreducible component $\mathcal{M}$ of $\upsilon^{-1}(\mathcal{N})\cap\mathcal{S}_{m}$ and $({\boldsymbol z},\tau_{0})\in\mathcal{M}$,
 we have
 $\upsilon^{-1}(\tau_{0})\subset\mathcal{M}$ for any
 $\mathcal{M}\in\Lambda(\mathcal{N})$, hence
 $\upsilon^{-1}(\tau_{0})\subset\mathcal{S}(\tau_{0},\mathcal{N})$.
 When $n=4$, by Lemma~\ref{180214_14Mar25},
 for
 $\mathcal{M}\in\Lambda(\mathcal{N})$ and
 ${\boldsymbol i},{\boldsymbol j}\in{I}$, there exists
 $\mathcal{M}_{{\boldsymbol i},{\boldsymbol j}}\in\Lambda(\mathcal{N})$
 such that
 $$
 \mathcal{M}_{{\boldsymbol i},{\boldsymbol j}}
 =\bigl\{({\boldsymbol z},\tau)\in\mathbb{C}^{d}\times\mathfrak{H}_{d}\mid
 ({\boldsymbol z}-\tau
 \begin{pmatrix}
  {\boldsymbol 0}\\
  {\boldsymbol i}
 \end{pmatrix}
 -\Delta
 \begin{pmatrix}
  {\boldsymbol 0}\\
  {\boldsymbol j}
 \end{pmatrix},
 \tau)\in\mathcal{M}\bigr\}.
 $$
 By Corollary~\ref{154114_14Mar25}, we have
 $\upsilon^{-1}(\tau_{0})\subset\mathcal{S}(\tau_{0},\mathcal{N})$.
\end{proof}
We define $\mathcal{R}_{1,2n}^{h}=\mathcal{R}_{1,2n}$, and for $g\geq2$,
$$
\mathcal{R}_{g,2n}^{h}
=\{[\tilde{C}\overset{\phi}{\rightarrow}C]\in\mathcal{R}_{g,2h}\mid
\text{$C$ is a hyperelliptic curve}\}.
$$
\begin{lemma}\label{161319_14Mar25}
 Let $r=\frac{d(d+1)}{2}-3g-2n+3$.
 For $\tau_{0}\in\pi^{-1}(\Prym_{g,2n}{(\mathcal{R}_{g,2n}^{h})})$,
 there exist
 $$
 ({\boldsymbol a}_{1},\tau_{0}),\dots,({\boldsymbol a}_{r},\tau_{0})
 \in\upsilon^{-1}(\tau_{0})=S(A_{\tau_{0}},\mathcal{L}_{\tau_{0}})
 $$
 and
 $$
 f_{1}({\boldsymbol z}),\dots,f_{r}({\boldsymbol z})
 \in\bigoplus_{{\boldsymbol i}\in{I}}
 \mathbb{C}\,\theta_{\boldsymbol i}({\boldsymbol z},\tau_{0})
 =H^{0}(A_{\tau_{0}},\mathcal{L}_{\tau_{0}})\quad
 $$
 such that quadratics
 $$
 \sum_{k=1}^{d}\sum_{l=1}^{d}
 \frac{\partial^{2}f_{1}}{\partial{z_{k}}\partial{z_{l}}}
 ({\boldsymbol a}_{1})z_{k}z_{l},\,\dots,\,
 \sum_{k=1}^{d}\sum_{l=1}^{d}
 \frac{\partial^{2}f_{r}}{\partial{z_{k}}\partial{z_{l}}}
 ({\boldsymbol a}_{r})z_{k}z_{l}
 \in\mathbb{C}[z_{1},\dots,z_{d}]
 $$
 are linearly independent.
\end{lemma}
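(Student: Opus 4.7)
The plan is to realize each Hessian quadratic $\sum_{k,l} \frac{\partial^2 f_j}{\partial z_k \partial z_l}(\mathbf{a}_j)\,z_k z_l$ as the defining equation of a tangent cone $\mathcal{T}_{\Theta_j,\mathbf{a}_j}$ of some $\Theta_j\in|\mathcal{L}_{\tau_0}|$, and then to invoke Corollary~\ref{155343_14Mar25} together with a dimension count for $\Ker(\rho_2)$.

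First I would fix an isomorphism $(A_{\tau_0},[\mathcal{L}_{\tau_0}])\simeq(P,[\mathcal{L}])$ so that the basis $\theta_{\mathbf{i}}(\mathbf{z},\tau_0)$ of $H^0(A_{\tau_0},\mathcal{L}_{\tau_0})$ is identified with a basis of $H^0(P,\mathcal{L})$. In these uniformizing coordinates the tangent space at every point is trivialized as $T_{P,\mathbf{a}}\simeq\mathbb{C}^d\simeq T_{P,0}$, and, using the translation $t_{\mathcal{F}_0}$ before Proposition~\ref{165606_22Feb25} together with the Kempf identification $T_{P_\phi,\mathcal{F}}\simeq H^0(\Omega_C^1\otimes\eta_\phi)^{\vee}$ used in Lemma~\ref{122252_5Mar25}, one obtains a canonical
$$
\Sym^2 (\mathbb{C}^d)^{\vee}\simeq\Sym^2 H^0(\Omega_C^1\otimes\eta_\phi).
$$
Any $\mathbf{a}\in S(A_{\tau_0},\mathcal{L}_{\tau_0})$ satisfies $\mult_{\mathbf{a}}\{f=0\}\geq 2$ for every $f\in H^0(A_{\tau_0},\mathcal{L}_{\tau_0})$, and under the above chain of identifications the Hessian quadratic $\sum_{k,l}\frac{\partial^2 f}{\partial z_k\partial z_l}(\mathbf{a})\,z_kz_l$ is, up to a non-zero scalar, the defining quadric of the tangent cone $\mathcal{T}_{\{f=0\},\mathbf{a}}$.

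Second, since $\tau_0\in\pi^{-1}(\Prym_{g,2n}(\mathcal{R}_{g,2n}^{h}))$ the base curve $C$ is elliptic (when $g=1$) or hyperelliptic, and $n\geq4$, so Corollary~\ref{155343_14Mar25} applies: the defining quadrics of the tangent cones $\mathcal{T}_{\Theta,x}$ with $x\in S(P,\mathcal{L})$, $\Theta\in|\mathcal{L}|$, and $\mult_x\Theta=2$ generate $\Ker(\rho_2)\subset\Sym^2 H^0(\Omega_C^1\otimes\eta_\phi)$. Since $\rho_2$ is surjective (recalled at the start of Section~\ref{165857_20Mar25}) and Riemann-Roch gives $h^0((\Omega_C^1\otimes\eta_\phi)^{\otimes2})=4g-4+2n-g+1=3g+2n-3$, I can compute
$$
\dim\Ker(\rho_2)=\tfrac{d(d+1)}{2}-(3g+2n-3)=r.
$$
Extracting a basis of $\Ker(\rho_2)$ of size $r$ from the generating set of tangent cones gives $r$ pairs $(\mathbf{a}_j,\Theta_j)$ with $\mathbf{a}_j\in S(P,\mathcal{L})\simeq S(A_{\tau_0},\mathcal{L}_{\tau_0})=\upsilon^{-1}(\tau_0)$; taking $f_j\in H^0(A_{\tau_0},\mathcal{L}_{\tau_0})$ to be a defining section of $\Theta_j$ yields the desired linearly independent Hessians.

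The main obstacle is the bookkeeping of the first step: one has to verify that the Hessian of $f$ in the universal-cover coordinates $z_k$ corresponds exactly to the intrinsic tangent-cone quadric under the composition of the flat trivialization $T_{P,\mathbf{a}}\simeq T_{P,0}$, the translation isomorphism $P\simeq P_\phi$, and the identification $T_{P_\phi,\mathcal{F}}\simeq H^0(\Omega_C^1\otimes\eta_\phi)^{\vee}$ coming from $j^{\vee}$ in the proof of Lemma~\ref{122252_5Mar25}. Once this identification is established, the dimension count for $\Ker(\rho_2)$ and Corollary~\ref{155343_14Mar25} combine to give exactly the required $r$ independent quadrics.
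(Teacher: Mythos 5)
Your proposal is correct and follows essentially the same route as the paper: invoke Corollary~\ref{155343_14Mar25} to obtain points of $S(A_{\tau_{0}},\mathcal{L}_{\tau_{0}})$ and divisors $\Theta_{j}\in|\mathcal{L}_{\tau_{0}}|$ with $\mult\,\Theta_{j}=2$ whose tangent-cone quadrics form a basis of $\Ker{(\rho_{2})}$, check that $\dim{\Ker{(\rho_{2})}}=r$, and identify each tangent-cone quadric with the Hessian of a defining section $f_{j}$ in the uniformizing coordinates. The paper states the identification of the Hessian with the tangent cone and the equality $r=\dim{\Ker{(\rho_{2})}}$ without elaboration, whereas you spell out the Riemann--Roch computation and the chain of identifications; this is only a difference in the level of detail, not of method.
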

\begin{proof}
 Since $r=\dim{\Ker{(\rho_{2})}}$, by Corollary~\ref{155343_14Mar25},
 there exist
 $$
 ({\boldsymbol a}_{1},\tau_{0}),\dots,({\boldsymbol a}_{r},\tau_{0})
 \in\upsilon^{-1}(\tau_{0})=S(A_{\tau_{0}},\mathcal{L}_{\tau_{0}})
 $$
 and
 $$
 \Theta_{1},\dots,\Theta_{r}\in|\mathcal{L}_{\tau_{0}}|
 $$
 such that $\mult_{({\boldsymbol a}_{j},\tau_{0})}{\Theta_{j}}=2$,
 and the defining quadratics of the tangent cones
 $\mathcal{T}_{\Theta_{j},({\boldsymbol a}_{j},\tau_{0})}$ for
 $1\leq{j}\leq{r}$ form a basis of $\Ker{(\rho_{2})}$.
 Let
 $$
 f_{j}({\boldsymbol z})
 \in\bigoplus_{{\boldsymbol i}\in{I}}
 \mathbb{C}\,\theta_{\boldsymbol i}({\boldsymbol z},\tau_{0})
 =H^{0}(A_{\tau_{0}},\mathcal{L}_{\tau_{0}})
 $$
 be a defining section of $\Theta_{j}\subset{A_{\tau_{0}}}$.
 Then the defining quadratic of the tangent cone
 $\mathcal{T}_{\Theta_{j},({\boldsymbol a}_{j},\tau_{0})}$
 is given by
 $\sum_{k=1}^{d}\sum_{l=1}^{d}
 \frac{\partial^{2}f_{j}}{\partial{z_{k}}\partial{z_{l}}}
 ({\boldsymbol a}_{r})z_{k}z_{l}
 \in\mathbb{C}[z_{1},\dots,z_{d}]$.
\end{proof}
\begin{lemma}[Heat equation, \cite{BL} Proposition~8.5.5.]\label{183022_14Mar25}
 For ${\boldsymbol i}\in{I}$ and $1\leq{k}\leq{l}\leq{d}$,
 $$
 \frac{\partial^{2}\theta_{\boldsymbol i}}{\partial{{z}_{k}}\partial{z_{l}}}
 ({\boldsymbol z},\tau)
 =2\pi\sqrt{-1}(1+\delta_{kl})
 \frac{\partial\theta_{\boldsymbol i}}{\partial{\tau_{kl}}}
 ({\boldsymbol z},\tau).
 $$
\end{lemma}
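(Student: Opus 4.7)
The identity is a direct computation on the defining series, so the plan is to differentiate termwise and match coefficients. Writing
\[
E_{\boldsymbol m}({\boldsymbol z},\tau)
=\exp\Bigl(\pi\sqrt{-1}\bigl(\trans{({\boldsymbol m}+{\boldsymbol c}')}\tau({\boldsymbol m}+{\boldsymbol c}')
+2\trans{({\boldsymbol m}+{\boldsymbol c}')}{\boldsymbol z}\bigr)\Bigr)
\]
with ${\boldsymbol c}'=\trans{(0,\dots,0,i_1,\dots,i_g)}$, we have $\theta_{\boldsymbol i}({\boldsymbol z},\tau)=\sum_{{\boldsymbol m}\in\mathbb{Z}^d}E_{\boldsymbol m}({\boldsymbol z},\tau)$, and this series converges absolutely and uniformly on compact subsets of $\mathbb{C}^d\times\mathfrak{H}_d$ (since $\im\tau>0$). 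The same holds for every derivative, so it suffices to prove the heat identity termwise.

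First I would compute the $z$-derivatives. Since the exponent is linear in ${\boldsymbol z}$ with coefficient $2\pi\sqrt{-1}\,({\boldsymbol m}+{\boldsymbol c}')$, we get
\[
\frac{\partial^{2} E_{\boldsymbol m}}{\partial z_k\partial z_l}({\boldsymbol z},\tau)
=-4\pi^{2}(m_k+c'_k)(m_l+c'_l)\,E_{\boldsymbol m}({\boldsymbol z},\tau).
\]
Next I would compute the $\tau$-derivative, keeping careful track of the symmetry constraint $\tau_{kl}=\tau_{lk}$. Expanding the quadratic form gives
\[
\trans{({\boldsymbol m}+{\boldsymbol c}')}\tau({\boldsymbol m}+{\boldsymbol c}')
=\sum_{p=1}^{d}(m_p+c'_p)^{2}\tau_{pp}
+2\sum_{1\leq p<q\leq d}(m_p+c'_p)(m_q+c'_q)\tau_{pq},
\]
so the variable $\tau_{kl}$ (with $k\leq l$) appears with coefficient $(1+\delta_{kl})(m_k+c'_k)(m_l+c'_l)$. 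Hence
\[
\frac{\partial E_{\boldsymbol m}}{\partial\tau_{kl}}({\boldsymbol z},\tau)
=\pi\sqrt{-1}\,(1+\delta_{kl})(m_k+c'_k)(m_l+c'_l)\,E_{\boldsymbol m}({\boldsymbol z},\tau).
\]

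Comparing the two displays, the termwise identity
\[
\frac{\partial^{2}E_{\boldsymbol m}}{\partial z_k\partial z_l}
=2\pi\sqrt{-1}\,(1+\delta_{kl})\frac{\partial E_{\boldsymbol m}}{\partial\tau_{kl}}
\]
follows from the arithmetic $-4\pi^{2}=2\pi\sqrt{-1}\cdot(\pi\sqrt{-1})\cdot 2$, i.e.\ $-4\pi^{2}/(2\pi\sqrt{-1})=2\pi\sqrt{-1}$. Summing over ${\boldsymbol m}\in\mathbb{Z}^{d}$ and using termwise differentiation yields the claimed equation for $\theta_{\boldsymbol i}$. There is no real obstacle here: the only delicate point is the factor $1+\delta_{kl}$, which must be traced back to whether $\tau_{kl}$ occurs once (on the diagonal) or twice (off the diagonal) in the symmetric quadratic form, and this is exactly what balances the factor of $2$ coming from the two $z$-derivatives when $k\neq l$ versus the single second derivative $\partial^{2}/\partial z_k^{2}$ when $k=l$.
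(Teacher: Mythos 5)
The paper offers no proof of this lemma---it is quoted directly from \cite[Proposition~8.5.5]{BL}---so your termwise computation is the natural thing to supply, and its overall structure (absolute and locally uniform convergence of the series and all its derivatives, hence differentiation under the sum, then matching coefficients term by term) is sound. But the $\delta_{kl}$ bookkeeping is inverted at exactly the point you yourself flag as the delicate one. From your own expansion $\trans{({\boldsymbol m}+{\boldsymbol c}')}\tau({\boldsymbol m}+{\boldsymbol c}')=\sum_{p}(m_p+c'_p)^{2}\tau_{pp}+2\sum_{p<q}(m_p+c'_p)(m_q+c'_q)\tau_{pq}$, the variable $\tau_{kl}$ with $k\leq l$ occurs with coefficient $(2-\delta_{kl})(m_k+c'_k)(m_l+c'_l)$ --- factor $1$ on the diagonal, factor $2$ off it --- not $(1+\delta_{kl})(m_k+c'_k)(m_l+c'_l)$ as you assert. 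The correct derivative is therefore $\frac{\partial E_{\boldsymbol m}}{\partial\tau_{kl}}=\pi\sqrt{-1}\,(2-\delta_{kl})(m_k+c'_k)(m_l+c'_l)E_{\boldsymbol m}$.

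With the formula as you wrote it, the right-hand side of the termwise identity would be $-2\pi^{2}(1+\delta_{kl})^{2}(m_k+c'_k)(m_l+c'_l)E_{\boldsymbol m}$, which matches the left-hand side $-4\pi^{2}(m_k+c'_k)(m_l+c'_l)E_{\boldsymbol m}$ in neither the case $k=l$ (where $(1+\delta_{kl})^{2}=4$) nor the case $k\neq l$ (where it is $1$); your closing arithmetic ``$-4\pi^{2}=2\pi\sqrt{-1}\cdot(\pi\sqrt{-1})\cdot 2$'' silently replaces $(1+\delta_{kl})^{2}$ by $2$. The identity that actually makes the lemma work is $(1+\delta_{kl})(2-\delta_{kl})=2$ for all $k,l$, which is what renders the constant uniform in $k,l$. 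Once the $\tau$-derivative is corrected to carry the factor $(2-\delta_{kl})$, the rest of your argument goes through verbatim.
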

\begin{proof}[Proof of Theorem~\ref{165539_28Jan25}]
 Let $\mathcal{N}$ be a local irreducible component of $\mathcal{N}_{n-4}$
 at $\tau_{0}\in\pi^{-1}(\Prym_{g,2n}{(\mathcal{R}_{g,2n}^{h})})$.
 By Lemma~\ref{161319_14Mar25} and
 Lemma~\ref{183022_14Mar25}, there exist
 $({\boldsymbol a}_{j},\tau_{0})
 \in\upsilon^{-1}(\tau_{0})$
 and
 $f_{j}({\boldsymbol z})\in\bigoplus_{{\boldsymbol i}\in{I}}
 \mathbb{C}\,\theta_{\boldsymbol i}({\boldsymbol z},\tau_{0})$
 for $1\leq{j}\leq{r}$
 such that the following quadratics are linearly independent:
 $$
 \sum_{1\leq{k}\leq{l}\leq{d}}\frac{\partial{f_{1}}}
 {\partial{\tau_{kl}}}({\boldsymbol a}_{1})z_{k}z_{l},\dots,
 \sum_{1\leq{k}\leq{l}\leq{d}}\frac{\partial{f_{r}}}
 {\partial{\tau_{kl}}}({\boldsymbol a}_{r})z_{k}z_{l}
 \in\mathbb{C}[z_{1},\dots,z_{d}].
 $$
 Hence, the cotangent vectors
 $$
 \sum_{1\leq{k}\leq{l}\leq{d}}\frac{\partial{f_{1}}}
 {\partial{\tau_{kl}}}({\boldsymbol a}_{1})d\tau_{kl},\dots,
 \sum_{1\leq{k}\leq{l}\leq{d}}\frac{\partial{f_{r}}}
 {\partial{\tau_{kl}}}({\boldsymbol a}_{r})d\tau_{kl}
 \in\bigoplus_{1\leq{k}\leq{l}\leq{d}}\mathbb{C}d\tau_{kl}
 $$
 are linearly independent.
 By Lemma~\ref{184253_14Mar25} and Proposition~\ref{184527_14Mar25},
 we have
 $$
 \dim{\mathcal{N}}\leq\dim{\mathfrak{H}_{d}}-r=3g+2n-3.
 $$
 Since $\mathcal{R}_{g,2n}$ is irreducible by \cite[p138]{C},
 there exist an irreducible component
 $\mathcal{P}$
 of the generalized Andreotti-Mayer locus
 $\mathcal{N}_{d,n-4}^{\Delta}\subset\mathcal{A}_{d}^{\Delta}$
 such that $\Prym_{g,2n}{(\mathcal{R}_{g,2n})}\subset\mathcal{P}$.
 Then, we have $\pi^{-1}(\mathcal{P})\subset\mathcal{N}_{n-4}$, and
 $$
 \dim{\mathcal{P}}=\dim_{\tau_{0}}{\pi^{-1}(\mathcal{P})}\leq
 \dim_{\tau_{0}}{\mathcal{N}}\leq3g+2n-3,
 $$
 for some local irreducible component $\mathcal{N}$ of $\mathcal{N}_{n-4}$
 at $\tau_{0}\in\pi^{-1}(\Prym_{g,2n}{(\mathcal{R}_{g,2n}^{h})})$.
 On the other hand, by Theorem~\ref{185532_14Mar25}, we have
 $$
 \dim{\mathcal{P}}\geq\dim{\Prym_{g,2n}{(\mathcal{R}_{g,2n})}}=3g+2n-3.
 $$
 Hence, we conclude that
 $\mathcal{P}=\overline{\Prym_{g,2n}{(\mathcal{R}_{g,2n})}}$.
\end{proof}

\end{document}